\setlist{labelindent=1pt,itemsep=0.1cm}
\setlist[itemize]{leftmargin=0.7cm}
\setlist[enumerate]{itemindent=0em,leftmargin=0.7cm}
\DeclareMathOperator{\esssup}{ess\;sup}
\newcommand*{\QEDB}{\hfill\ensuremath{\square}}%
\begin{document}
\title*{Representations of polynomial covariance type commutation relations by linear integral operators with separable kernels in $L_p$}
\titlerunning{Representations  by  Linear integral operators with separable kernels }        

\author{Domingos Djinja \and Sergei Silvestrov \and Alex Behakanira Tumwesigye}
\authorrunning{D. Djinja, S. Silvestrov, A. B. Tumwesigye} 

\institute{Domingos Djinja \at
Department of Mathematics and Informatics, Faculty of Sciences, Eduardo Mondlane University, Box 257, Maputo, Mozambique \\
\email{domingos.djindja@uem.ac.mz}
\at
Division of Mathematics and Physics, School of Education, Culture and Communication, M{\"a}lardalen University, Box 883, 72123 V{\"a}ster{\aa}s, Sweden \\
\email{domingos.celso.djinja@mdu.se}
\and
Sergei Silvestrov  (corresponding author)
\at Division of Mathematics and Physics, School of Education, Culture and Commu\-nication, M{\"a}lardalen University, Box 883, 72123 V{\"a}ster{\aa}s, Sweden. \\
\email{sergei.silvestrov@mdu.se}
\and
Alex Behakanira Tumwesigye \at
Department of Mathematics, College of Natural Sciences, Makerere University, Box 7062, Kampala, Uganda. \email{alex.tumwesigye@mak.ac.ug}
}



\maketitle

\abstract*{
Representations of polynomial covariance type commutation relations by linear integral operators on $L_p$ over measures spaces  are investigated. Necessary and sufficient conditions for integral operators to satisfy polynomial covariance type commutation relations are obtained in terms of their kernels. For important classes of polynomial  covariance commutation relations associated to arbitrary monomials and to affine functions, these conditions on the kernels are specified in terms of the coefficients of the monomials and affine functions.
By applying these conditions, examples of integral operators on $L_p$ spaces, with separable kernels  representing  covariance commutation relations associated to monomials, are constructed for the kernels involving multi-parameter trigonometric functions, polynomials and Laurent polynomials on bounded intervals. Commutators of these operators are computed and exact conditions for commutativity of these operators in terms of the parameters are obtained.
\keywords{integral operators, covariance commutation relations, general separable kernel}\\
{\bf MSC 2020:} 47G10, 47L80, 81D05, 47L65}

\abstract{
Representations of polynomial covariance type commutation relations by linear integral operators on $L_p$ over measures spaces  are investigated. Necessary and sufficient conditions for integral operators to satisfy polynomial covariance type commutation relations are obtained in terms of their kernels. For important classes of polynomial  covariance commutation relations associated to arbitrary monomials and to affine functions, these conditions on the kernels are specified in terms of the coefficients of the monomials and affine functions.
By applying these conditions, examples of integral operators on $L_p$ spaces, with separable kernels  representing  covariance commutation relations associated to monomials, are constructed for the kernels involving multi-parameter trigonometric functions, polynomials and Laurent polynomials on bounded intervals. Commutators of these operators are computed and exact conditions for commutativity of these operators in terms of the parameters are obtained.
\keywords{covariance commutation relations, integral operators, separable kernel}\\
{\bf MSC 2020:} 47A62, 47L80, 47L65, 47G10}

\section{Introduction}
Commutation relations \index{commutation relation} of the form
\begin{equation} \label{CovrelABeqBFA}
  AB=B F(A)
\end{equation}
where $A, B$ are elements of an associative algebra and  $F$ is a function of the elements of the algebra, are important in many areas of Mathematics and applications. Such commutation relations are usually called covariance relations, crossed product relations or semi-direct product relations. A pair $(A,B)$ of elements of an algebra that satisfies \eqref{CovrelABeqBFA} is called a representation of this relation \cite{Samoilenkobook}.

Representations of covariance commutation relations \eqref{CovrelABeqBFA} by linear operators are important for study of actions and induced representations of groups and semigroups, crossed product operator algebras, dynamical systems, harmonic analysis, wavelets and fractal analysis and, hence have applications in physics and engineering
\cite{BratJorgIFSAMSmemo99,BratJorgbook,JorgWavSignFracbook,JorgOpRepTh88,JorMoore84,MACbook1,MACbook2,MACbook3,OstSambook,Pedbook79,Samoilenkobook}.

A description of the structure of representations for the relations of the form \eqref{CovrelABeqBFA}
by bounded and unbounded self-adjoint operators, normal operators, unitary operators, partial isometries
and other linear operators with special involution conditions on a Hilbert space, have been considered in
\cite{BratEvansJorg2000,CarlsenSilvExpoMath07,CarlsenSilvAAM09,CarlsenSilvProcEAS10,DutkayJorg3,DJS12JFASilv,DLS09,DutSilvProcAMS,DutSilvSV,JSvT12a,JSvT12b,Mansour16,JMusondaPhdth18,JMusonda19,Musonda20,Nazaikinskii96,OstSambook,PerssonSilvestrov031,PerssonSilvestrov032,PersSilv:CommutRelDinSyst,RST16,RSST16,Samoilenkobook,SaV8894,SilPhD95,STomdynsystype1,SilWallin96,SvSJ07a,SvSJ07b,SvSJ07c,SvT09,Tomiyama87,Tomiama:SeoulLN1992,Tomiama:SeoulLN2part2000,AlexThesis2018,VaislebSa90}.
using reordering formulas for functions of the algebra elements and operators satisfying covariance commutation relation,
functional calculus and spectral representation of operators and interplay with dynamical systems or iterated function systems generated
by iteration of the maps involved in the commutation relations.

In this paper, we construct representations of the covariance commutation relations \eqref{CovrelABeqBFA} by linear integral operators on Banach spaces $L_p$ over measure spaces.
 We consider representations by the linear integral operators defined by kernels satisfying different conditions. We derive conditions on such kernel functions so that the corresponding operators satisfy \eqref{CovrelABeqBFA} for polynomial $F$ when both operators  are of linear integral type.
 Representations of polynomial covariance type commutation relations by linear integral operators on $L_p$ over measures spaces  are constructed. In contrast to \cite{OstSambook,Samoilenkobook,SaV8894,VaislebSa90} devoted to involutive representations of covariance type relations by operators on Hilbert spaces using spectral theory of operators on Hilbert spaces,
we aim at direct construction of various classes of representations of covariance type relations in specific important classes of operators on Banach spaces more general than Hilbert spaces without imposing any involution conditions and not using classical spectral theory of operators. Conditions for such representations are described in terms of kernels of the corresponding integral operators. Necessary and sufficient conditions for integral operators to satisfy polynomial covariance type commutation relations are obtained in terms of their kernels. For important classes of polynomial  covariance commutation relations associated to arbitrary monomials and to affine functions, these conditions on the kernels are specified in terms of the coefficients of the monomials and affine functions. By applying these conditions, examples of integral operators on $L_p$ spaces, with separable kernels  representing  covariance commutation relations associated to monomials, are constructed for the kernels involving multi-parameter trigonometric functions, polynomials and Laurent polynomials on bounded intervals. Commutators of these operator are computed and exact conditions for commutativity of these operators in terms of the parameters are obtained.


This paper is organized in four sections. After the introduction,  we present in Section \ref{SecPreNot} some preliminaries, notations, basic definitions and an useful lemma.
In Section \ref{SecRepreBothLI}, we present representations when both operators $A$ and $B$ are linear integral operators with separable kernels acting on $L_p$ spaces. In Section \ref{SecRepreBothLIExample} we construct examples.

\section{Preliminaries and notations}\label{SecPreNot}
In this section we provide the preliminaries, basic definitions and notations needed in this article, referring to    \cite{AdamsG,AkkLnearOperators,BrezisFASobolevSpaces,FollandRA,HutsonPym,Kantarovitch,KolmogorovFominETFFAbook,KolmogorovFominETFFAbookVol2,KrasnolskZabreyko,RudinRCA}.
for further reading.

Let $\mathbb{R}$ be the set of all real numbers, $\mathbb{Z}$ the set of all integers, $\mathbb{N}$ the set of all positive integers and $ X$ a non-empty set.
Let  $(X,\Sigma, \mu)$ be a $\sigma$-finite measure space, where $\Sigma$ is a $\sigma-$algebra with measurable subsets of $X$,
and $X$ can be covered with at most countable many disjoint sets $E_1,E_2,E_3,\ldots$ such that $ E_i\in \Sigma, \,
\mu(E_i)<\infty$, $i=1,2,\ldots$  and $\mu$ is a measure.
For $1\leq p<\infty,$ we denote by $L_p(X,\mu)$, the set of all classes of equivalent (different on a set of zero measure)
measurable functions $f:X\to \mathbb{R}$ such that
$\int\limits_{X} |f(t)|^p d\mu < \infty.$
This is a Banach space (Hilbert space when $p=2$) with norm
$\| f\|_p= \left( \int\limits_{X} |f(t)|^p dt \right)^{\frac{1}{p}}.$
We denote by $L_\infty(X,\mu)$ the set of all classes of equivalent measurable functions $f:X\to \mathbb{R}$ such that exists $\lambda >0$,
$|f(t)|\leq \lambda$
almost everywhere. This is a Banach space with norm
$\displaystyle \|f\|_{\infty}=\mathop{\esssup}_{t\in X} |f(t)|.$
The support of a function $f:\, X\to\mathbb{R}$ is  ${\rm supp }\, f = \{t\in X \colon \, f(t)\not=0\}.$

Let $(\mathbb{R},\Sigma,\mu)$ be the standard Lebesgue measure space. We will use the following notation
\begin{equation}\label{QGpairingDefinition}
  Q_{G}(u,v)=\int\limits_{G} u(t)v(t)d\mu
\end{equation}
where $G\in \Sigma$ and $u,v$ are such functions $u,v:\, G\to \mathbb{R} $ that integral on the right hand side exist and is finite.

We will consider a useful lemma for  integral operators which will be used throughout the article.
\begin{lemma}\label{LemmaAllowInfSetsEqLp}
Let $(X,\Sigma,\mu)$ be a $\sigma$-finite measure space. Let $f\in L_q(G_1,\mu)$, $g\in L_q(G_2,\mu)$ for
$1\leq q\leq\infty$ and let $G_1,G_2\in \Sigma$,  $i=1,2$. Set $G=G_1\cap G_2$. Then the following statements are equivalent\textup{:}
\begin{enumerate}[label=\textup{\arabic*.}, ref=\arabic*]
  \item \label{LemmaAllowInfSetsEqLp:cond1} For all $x\in L_p(X,\mu)$, $1\leq p \leq \infty$ such that $\displaystyle \frac{1}{p}+\frac{1}{q}=1$,
\begin{equation*} 
 Q_{G_1}(f,x)= \int\limits_{G_1} f(t)x(t)d\mu=\int\limits_{G_2} g(t)x(t)d\mu=Q_{G_2}(g,x).
\end{equation*}
\item \label{LemmaAllowInfSetsEqLp:cond2} The following conditions hold\textup{:}
    \begin{enumerate}[label=\textup{\alph*)}]
      \item for almost every $t\in G$, $f(t)=g(t)$,
      \item for almost every $t \in G_1\setminus G,\ f(t)=0,$
      \item  for almost every $t \in G_2\setminus G,\ g(t)=0.$
    \end{enumerate}
\end{enumerate}
\end{lemma}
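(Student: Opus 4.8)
The plan is to prove the two implications separately: the direction from statement 2 to statement 1 is a direct computation, while the direction from statement 1 to statement 2 rests on a duality argument, with the endpoint case $q=\infty$ being the only delicate point.

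For the implication from 2 to 1, I would split each integral according to the decompositions $G_1 = G\cup(G_1\setminus G)$ and $G_2=G\cup(G_2\setminus G)$. H\"older's inequality guarantees that every integral involved is finite, since $f,g\in L_q$ and $x\in L_p$ with $\tfrac1p+\tfrac1q=1$. Condition (b) annihilates the contribution $\int_{G_1\setminus G} f x\,d\mu$, condition (c) annihilates $\int_{G_2\setminus G} g x\,d\mu$, and condition (a) identifies the two surviving integrals over $G$; hence $Q_{G_1}(f,x)=Q_{G_2}(g,x)$ for every admissible $x$.

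For the implication from 1 to 2, the key step is to recast the hypothesis as a single vanishing condition. Setting $h=f\chi_{G_1}-g\chi_{G_2}$, which lies in $L_q(X,\mu)$ after extension by zero outside $G_1\cup G_2$, statement 1 becomes $\int_X h(t)x(t)\,d\mu=0$ for all $x\in L_p(X,\mu)$. I then show $h=0$ almost everywhere by a suitable choice of test function. For $1\le q<\infty$ (so $1<p\le\infty$) I take $x=|h|^{q-1}\operatorname{sgn}(h)$, respectively $x=\operatorname{sgn}(h)$ when $q=1$; this $x$ belongs to $L_p$ because $(q-1)p=q$, and substituting it gives $\int_X |h|^q\,d\mu=\|h\|_q^q=0$, whence $h=0$ almost everywhere.

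The main obstacle is the case $q=\infty$, $p=1$, where the explicit test function above is unavailable. Here I would invoke the $\sigma$-finiteness of $(X,\Sigma,\mu)$: for every $E\in\Sigma$ with $\mu(E)<\infty$ the function $\chi_E$ belongs to $L_1$, so the hypothesis forces $\int_E h\,d\mu=0$, and covering $X$ by countably many sets of finite measure yields $h=0$ almost everywhere. Finally, evaluating $h=0$ on each block of the partition $X=G\sqcup(G_1\setminus G)\sqcup(G_2\setminus G)\sqcup\bigl(X\setminus(G_1\cup G_2)\bigr)$ produces exactly condition (a) on $G$, where $h=f-g$, condition (b) on $G_1\setminus G$, where $h=f$, and condition (c) on $G_2\setminus G$, where $h=-g$, which completes the proof.
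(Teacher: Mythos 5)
Your proof is correct, and the forward implication (2 $\Rightarrow$ 1) coincides with the paper's. For the converse you take a genuinely different route. The paper keeps $f$ and $g$ as two separate continuous functionals $H_f,H_g$ on $L_p(X,\mu)$, invokes the Riesz representation theorem ($L_p^*\cong L_q$ for $1\le p<\infty$ on a $\sigma$-finite space) and the \emph{uniqueness} of the representing function to conclude $I_{G_1}f=I_{G_2}g$ a.e., and then treats $p=\infty$ (i.e.\ $q=1$) separately by an ad hoc indicator-function and partition argument. You instead collapse the hypothesis into the single condition $\int_X h\,x\,d\mu=0$ for $h=f\chi_{G_1}-g\chi_{G_2}\in L_q(X,\mu)$ and kill $h$ with the extremal test function $x=|h|^{q-1}\operatorname{sgn}(h)$ (resp.\ $\operatorname{sgn}(h)$ for $q=1$), reserving special treatment for the opposite endpoint $q=\infty$, $p=1$, where you fall back on indicators of finite-measure sets and $\sigma$-finiteness. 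Your version is more elementary and self-contained (no duality theorem is quoted) and arguably cleaner, since the trichotomy (a)--(c) drops out of $h=0$ restricted to the partition $G$, $G_1\setminus G$, $G_2\setminus G$; the paper's version gets most exponents for free from a quoted theorem but pays for it with a separate, somewhat laborious endpoint case. The only point worth spelling out in your $q=\infty$ step is that passing from $\int_E h\,d\mu=0$ for all finite-measure $E$ to $h=0$ a.e.\ requires testing on sets such as $E_n\cap\{h>0\}$, which are again of finite measure, so the argument does close.
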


\begin{proof} \smartqed
\noindent \ref{LemmaAllowInfSetsEqLp:cond2}$\,\Rightarrow \,$\ref{LemmaAllowInfSetsEqLp:cond1}\ By additivity of $\mu$ we have
\begin{eqnarray*}
 \int\limits_{G_1} f(s)x(s)d\mu_s &=& \int\limits_{G_1\setminus G} f(s)x(s)d\mu_s+\int\limits_{G} f(s)x(s)d\mu_s=
 \int\limits_{G} g(s)x(s)d\mu_s\\
 &=&\int\limits_{G_2\setminus G} g(s)x(s)d\mu_s+\int\limits_{G} g(s)x(s)d\mu_s=\int\limits_{G_2} g(s)x(s)d\mu_s.
\end{eqnarray*}
\noindent \ref{LemmaAllowInfSetsEqLp:cond1}$\,\Rightarrow $ \ref{LemmaAllowInfSetsEqLp:cond2}
 Suppose that \ref{LemmaAllowInfSetsEqLp:cond1} is true. Let $1\le p <\infty$. By applying H\"older inequality we conclude that the following linear functionals $H_f: L_p(X,\mu)\to \mathbb{R},$ $H_g: L_p(X,\mu)\to\mathbb{R},$ defined by
  \begin{equation*}
   H_f(x)=\int\limits_{G_1} f(s)x(s)d\mu,\ H_g(x)= \int\limits_{G_2} g(s)x(s)d\mu,
  \end{equation*}
  are continuous. In fact, by applying H\"older inequality we have for all $x\in L_p(X)$
   \begin{align*}
     |H_f(x)| =&\left|\ \int\limits_{G_1} I_{G_1}(s)f(s)x(s)d\mu\right|\, \leq \left( \int\limits_{X} |I_{G_1}(s)f(s)|^q d\mu\right)^{1/q} \left( \int\limits_{X} |x(s)|^pd\mu\right)^{1/p}\\
     =&\left(\ \int\limits_{G_1} |f(s)|^q d\mu\right)^{1/q} \left(\, \int\limits_{X} |x(s)|^pd\mu\right)^{1/p} \leq \|f\|_{L_q} \|x\|_{L_p},
   \end{align*}
   where $f(\cdot) \in L_q(G_1,\mu)$ by hypothesis, $1 < q \leq \infty$, $\frac{1}{p}+\frac{1}{q}=1$. Similarly, one proves that $H_g$ is continuous.
   Therefore, by Riesz representation theorem of linear functionals \cite[Theorem 4.11]{BrezisFASobolevSpaces} (for $1<p<\infty$) and \cite[Theorem 4.14]{BrezisFASobolevSpaces} (for $p=1$ ) there exists $h\in L_q(X,\mu)$, $1 <q\leq \infty$ such that $\frac{1}{p}+\frac{1}{q}=1$ and for all $x\in L_p(X,\mu)$
\begin{equation}\label{ProofLemmaExtThmRepresentLp}
  \int\limits_{X} h(s)x(s)d\mu=\int\limits_{G_1} f(s)x(s)d\mu=\int\limits_{G_2} g(s)x(s)d\mu.
\end{equation}
Since this representation is unique we have
\begin{align*}
 h(s)=\, &I_{G}(s)f(s)+I_{G_1\setminus G}(s)f(s)+I_{G_2\setminus G}(s)g(s)
 \\ =\, &I_{G}(s)g(s)+I_{G_1\setminus G}(s)f(s)+I_{G_2\setminus G}(s)g(s),\  G=G_1\cap G_2.
\end{align*}
This implies that
$I_G(s)h(s)=I_G(s)f(s)=I_G(s)g(s)$. The function $h$ belongs to $L_q(X,\mu)$ since $f\in L_q(G_1,\mu)$, $g\in L_q(G_2,\mu)$ and $L_q(G_i,\mu)$, $i=1,2$ is a vector space. From \eqref{ProofLemmaExtThmRepresentLp} we conclude that
$I_{X\setminus G_2}(s)h(s)=I_{X\setminus G_1}(s)h(s)=0$ almost everywhere.
Hence
\begin{eqnarray*}
  I_{G_1\setminus G}(s)f(s)&=&I_{G_1\setminus G}(s)h(s)=0 \mbox{ almost everywhere since } G_1\setminus G \subseteq X\setminus G_2,\\
  I_{G_2\setminus G}(s)g(s)&=&I_{G_2\setminus G}(s)h(s)=0 \mbox{ almost everywhere since } G_2\setminus G \subseteq X\setminus G_1.
\end{eqnarray*}

Let $p=\infty$. By taking
     the indicator function $x(t)=I_{H_1}(t)$ of the set $H_1=G_1\cup G_2$, we have
\[ \textstyle \int\limits_{G_1} f(t)x(t)d\mu=\int\limits_{G_2} g(t)x(t)d\mu=
\int\limits_{G_1} f (t)d\mu=\int\limits_{G_2} g (t)d\mu =\eta,
\]
where $\eta$ is a constant. Now by taking $x(t)=I_{G_1\setminus G}(t)$ we get
\[ \textstyle
\int\limits_{G_1} f(t)x(t)d\mu=\int\limits_{G_2} g(t)x(t)d\mu= \int\limits_{G_1\setminus G} f (t)d\mu=\int\limits_{G_2} g (t)\cdot 0d\mu =0.
\]
Then $\int\limits_{G_1\setminus G} f (t)d\mu =0$. Analogously by taking $x(t)=I_{G_2\setminus G}(t)$ we get
    $      \int\limits_{G_2\setminus G} g (t)d\mu=0$.
    We claim that $f(t)=0$ for almost every $t\in G_1\setminus G$ and
    $g(t)=0$ for almost every $t\in G_2\setminus G$.
    We take a partition $\bigcup S_i$ of the set  $[\alpha_1,\beta_1]\setminus G$
    such that $S_i\cap S_j\neq \emptyset$ for $i\neq j$ and each set $S_i$ has positive measure.
    For each $x(t)=I_{S_i}(t)$ we have
    $$
    \int\limits_{G_1} f(t)x(t)d\mu=\int\limits_{G_2} g(t)x(t)d\mu=
    \int\limits_{S_i} f (t)d\mu=\int\limits_{G_2} g (t)\cdot 0d\mu =0.
    $$
    Thus,
    $
          \int\limits_{S_i} f (t)d\mu=0.
    $
    Since we can choose arbitrary partition with positive measure on each of its elements,
    $
      f(t)=0 \ \mbox{ for almost every } t\in G_1\setminus G.
    $
    Analogously,
    $
      g(t)=0 \  \mbox{ for almost every } t\in G_2\setminus G.
    $
    Therefore,
    $$
      \eta = \int\limits_{G_1} f (t)d\mu=\int\limits_{G_2} g (t)d\mu =\int\limits_G f (t)d\mu=\int\limits_G g (t)d\mu.
    $$
    Then, for all function $x\in L_p(X,\mu)$ we have
    $
      \int\limits_G f (t)x(t)d\mu=\int\limits_G g (t)x(t)d\mu \Leftrightarrow   \int\limits_G [f(t)-g(t)]x(t)d\mu=0.
    $
    With $x(t)=\left\{\begin{array}{cc} 1, & \mbox{if } f(t)-g(t)>0, \\ -1,  & \mbox{ if } f(t)-g(t)<0,\,   \end{array}\right.$
    for almost every $t\in G$ and $x(t)=0$ for almost every $t\in X\setminus G$, we get
    $\int\limits_{G} |f(t)-g(t)|d\mu =0.$ This implies that $f(t)=g(t)$ for almost every $t\in G$.
 \QEDB
\end{proof}

\section{Representations by linear integral operators}\label{SecRepreBothLI}
Let $(X,\Sigma,\mu)$ be a  $\sigma$-finite measure space. In this section we consider  representations
of the covariance type commutation relation \eqref{CovrelABeqBFA} when both $A$ and $B$
are linear integral operators \index{integral operator} on Banach space $L_p(X,\mu)$ with $\ 1\le p\le\infty$, and separable kernels:
\begin{equation}\label{OpeqnsSKthmGenSeparedKnlsAB}
  (Ax)(t)= \int\limits_{G_A} \sum _{i=1}^{l_A} a_i(t)c_i(s)x(s)d\mu_s,\quad (Bx)(t)= \int\limits_{G_B}\sum_{j=1}^{l_B} b_j(t)e_j(s)x(s)d\mu_s,
\end{equation}
for almost every $t$, where the index in $\mu_s$ indicates the variable of integration, $G_A\in \Sigma$ and $G_B\in \Sigma$.
When $B=0$, the relation \eqref{CovrelABeqBFA} is trivially satisfied for any $A$. If $A=0$, then the relation \eqref{CovrelABeqBFA} reduces to $F(0)B=0$. This implies either ($F(0)=0$ and $B$ can be any well defined operator) or $B=0$. Thus, we focus on construction and properties of non-zero representations of \eqref{CovrelABeqBFA}.
We follow a similar argument to Folland \cite{FollandRA} to prove sufficient conditions for these operators to be well defined and bounded. If $a_i, b_j\in L_p(X,\mu)$, $c_i\in L_q(G_A,\mu)$, $e_j\in L_q(G_B,\mu)$, $i,j, l_A,l_B$ are positive integers,  $i=1,\ldots, l_A$, $j=1,\ldots, l_B$,  $1\leq q\leq\infty$, $\frac{1}{p}+\frac{1}{q}=1$,  then operators $A$ and $B$ are well defined and bounded. In fact, if we let
$(A_ix)(t)=\int\limits_{G_A} a_i(t)c_i(s)x(s)d\mu_s$ for almost every $t$ and $(Ax)(t)=\sum\limits_{i=1}^{l_A} (A_ix)(t)$, for almost every $t\in X$, and if $1<p<\infty$, then
by H\"older inequality
we have for all $x\in L_p(X,\mu)$,  $c_i(\cdot)x(\cdot)\in L_1(G_A,\mu)$ and
\begin{align*}
    \|A_ix\|^p_{L_p(X,\mu)} = & 
    \int\limits_{X} \left|\ \int\limits_{G_A} a_i(t)c_i(s)x(s)d\mu_s \right|^p d\mu_t \\
    \leq & \int\limits_{X} |a_i(t)|^p d\mu_t
  \left(\ \int\limits_{X} |I_{G_A}(s)c_i(s)|^q d\mu_s \right)^{\frac{p}{q}}
  \int\limits_{X} |x(s)|^p d\mu_s
   \\
   = & \int\limits_{X} |a_i(t)|^p d\mu_t \left(\ \int\limits_{G_A} |c_i(s)|^q d\mu_s \right)^{\frac{p}{q}} \int\limits_{X} |x(s)|^p d\mu_s \\
     = &\|a_i\|^p_{L_p(X,\mu)} \|c_i\|_{L_q(G_A,\mu)}^{p}\|x\|_{L_p(X,\mu)}^p,
\end{align*}
for $1< p <\infty$, $i=1,\ldots, l_A$. For $p=\infty$,  by H\"older inequality, for all $x\in L_\infty(X,\mu)$, $c_i(\cdot)x(\cdot)\in L_1(G_A,\mu)$ and
\begin{align*}
 \|A_ix\|_{L_\infty(X,\mu)} & =  \mathop{\esssup}_{t\in X} \left|\int\limits_{G_A} a_i(t)c_i(s)x(s)d\mu_s \right|
 \\
 & \leq
 \left(\mathop{\esssup}_{t\in X} |a_i(t)|\right) \left(\int\limits_{X} |I_{G_A}(s)c_i(s)|d\mu_s\right) \left(\ \mathop{\esssup}_{s\in X} |x(s)|\right)
\\
 &= \left(\ \mathop{\esssup}_{t\in X} |a_i(t)| \right)\left(\ \int\limits_{G_A} |c_i(s)|d\mu_s\right) \left(\ \mathop{\esssup}_{s\in X} |x(s)|\right)
 \\
 &
 =\|a_i\|_{L_\infty(X,\mu)}\|c_i\|_{L_1(G_A,\mu)}\|x\|_{L_\infty(X,\mu)},
\end{align*}
for $i=1,\ldots, l_A$. Similarly, one argues on the case $p=1$.   Since $L_p(X,\mu)$, $1\leq p \leq \infty$ is a linear space, we conclude that operators $A$, $B$ are well defined on $L_p(X,\mu)$ and bounded.

In the following theorem we will consider another equivalent way of writing the operators
in \eqref{OpeqnsSKthmGenSeparedKnlsAB} in order to simplify notation on special cases that we will see along the article.

\begin{theorem}\label{thmBothIntOPSupGenSeptedKernels}
Let $(X,\Sigma,\mu)$ be $\sigma$-finite measure space. Let $A:L_p(X,\mu)\to L_p(X,\mu)$,  $B:L_p(X,\mu)\to L_p(X,\mu)$, $1\le p\le\infty$ be nonzero operators defined as follows
\begin{gather}\label{OpeqnsSKthmSupGenSeparedKnlsA}
  (Ax)(t)= \int\limits_{G_A} \sum_{k=1}^{l_{A,1}}\sum_{i=1}^{l_{A,2}}\sum _{l=1}^{l_{A,3}} \gamma_A^{k,i,l}a_{k,l}(t)c_{i,l}(s)x(s)d\mu_s,\\
\label{OpeqnsSKthmSupGenSeparedKnlsB}
  (Bx)(t)= \int\limits_{G_B}\sum_{u=1}^{l_{B,1}}\sum_{w=1}^{l_{B,2}}\sum _{r=1}^{l_{B,3}} \gamma_B^{u,w,r} b_{u,r}(t)e_{w,r}(s)x(s)d\mu_s,
\end{gather}
for almost every $t$, where the index in $\mu_s$ indicates the variable of integration, $G_A\in \Sigma$ and $G_B\in \Sigma$, $a_{k,l},b_{u,r}\in L_p(X,\mu)$, $c_{i,l}\in L_q(G_A,\mu)$, $e_{w,r}\in L_q(G_B,\mu)$, $k,i,l, l_{A,1},l_{A,2}, l_{A,3}$, $u,w,r, l_{B,1},l_{B,2}, l_{B,3}$ are positive integers such that $1\le k\le  l_{A,1}$, $1\le i\le  l_{A,2}$, $1\le l\le  l_{A,3}$, $1\le u\le  l_{B,1}$, $1\le w\le  l_{B,2}$,  $1\le r\le  l_{B,3}$ and $1\le q\le\infty$ such that $\frac{1}{p}+\frac{1}{q}=1$, $\gamma_A^{k,i,l}, \gamma_B^{u,w,r}\in \mathbb{R}$. Consider a polynomial defined by
$F(z)=\sum\limits_{j=0}^{n} \delta_j z^j$, where $\delta_j \in\mathbb{R}$ $j=0,\ldots,n$. Let $ G=G_A\cap G_B,$ and
\[
 \tilde{\gamma}_{1}=1,\quad \tilde{\gamma}_{m}=\prod_{v=1}^{m-1} Q_{G_A} (a_{k_{v+1},l_{v+1}},c_{i_v,l_v}),\ m\ge 2.
\]
where $Q_{\Lambda}(u,v)$, $\Lambda\in\Sigma$, is defined by \eqref{QGpairingDefinition}. Then
$
  AB=BF(A)
$
if and only if  the following conditions are fulfilled:
\begin{enumerate}[label=\textup{\arabic*.}, ref=\arabic*, leftmargin=*]
  \item \label{thmBothIntOPSupGenSeptedKernels:cond1}
        for almost every $(t,s)\in  X\times G$,
           \begin{gather*}
             -\sum_{u,w,r=1}^{l_{B,1},l_{B,2},l_{B,3}} \delta_0 \gamma_B^{u,w,r} b_{u,r}(t)e_{w,r}(s)+\sum_{(k,i,l=1)}^{(l_{A,1},l_{A,2}, l_{A_3})}\sum_{(u,w,r=1)}^{(l_{B,1}, l_{B_2}, l_{B_3})} \gamma_A^{k,i,l} \gamma_B^{u,w,r} \\
             \cdot a_{k,l}(t)Q_{G_A}(c_{i,l},  b_{u,r} ) e_{w,r}(s)\\
     =\sum_{(u,w,r=1)}^{(l_{B,1},l_{B,2},l_{B,3})}\sum_{j=1}^n  \sum_{(k_1,i_1, l_1=1)}^{(l_{A,1}, l_{A,2}, l_{A,3})}
     \ldots \sum_{(k_{j},i_{j}, l_{j}=1)}^{(l_{A,1}, l_{A,2}, l_{A,3})} \delta_j \tilde{\gamma}_{j}\gamma_B^{u,w,r} \\ \cdot\prod_{v=1}^{j} \gamma_A^{k_v,i_v,l_v}  Q_{G_B}(e_{w,r},a_{k_1,l_1})b_{u,r}(t) c_{i_j,l_j} (s)
            \end{gather*}
\item  \label{thmBothIntOPSupGenSeptedKernels:cond2}  for almost every $(t,s)\in  X\times (G_A\setminus G)$,
 \begin{gather*}
 \sum_{(u,w,r=1)}^{(l_{B,1},l_{B,2},l_{B,3})}  \sum_{j=1}^n  \sum_{(k_1,i_1, l_1=1)}^{(l_{A,1}, l_{A,2}, l_{A,3})}
 \ldots \sum_{(k_{j},i_{j}, l_{j}=1)}^{(l_{A,1}, l_{A,2}, l_{A,3})} \delta_j \tilde{\gamma}_{j}\gamma_B^{u,w,r}\\
 \hspace{4cm}  \cdot\prod_{v=1}^{j} \gamma_A^{k_v,i_v,l_v}
  Q_{G_B}(e_{w,r},a_{k_1,l_1})b_{u,r}(t)  c_{i_{j},l_{j}}(s)=0.
 \end{gather*}
\item \label{thmBothIntOPSupGenSeptedKernels:cond3} for almost every $(t,s)\in  X\times (G_B\setminus G)$,
\begin{align*}
& \sum_{(u,w,r=1)}^{(l_{B,1},l_{B,2},l_{B,3})} \delta_0 \gamma_B^{u,w,r} b_{u,r}(t)e_{w,r}(s)=\hspace{-2mm}\sum_{(k,i,l=1)}^{(l_{A,1},l_{A,2},l_{A,3})}\sum_{(u,w,r=1)}^{(l_{B,1},l_{B,2},l_{B,3})} \gamma_A^{k,i,l} \gamma_B^{u,w,r}  a_{k,l}(t)\\
  & \hspace{7cm} \cdot Q_{G_A}(c_{i,l},  b_{u,r}) e_{w,r}(s).
\end{align*}
\end{enumerate}
\end{theorem}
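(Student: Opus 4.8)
The plan is to compute both sides of \eqref{CovrelABeqBFA} explicitly at the level of kernels and then invoke Lemma~\ref{LemmaAllowInfSetsEqLp} fibre-wise in the variable $t$ in order to convert the operator identity into the three pointwise conditions. Every interchange of the order of integration below is legitimate by exactly the same H\"older/Fubini estimates already used to prove that $A$ and $B$ are bounded, since $a_{k,l},b_{u,r}\in L_p(X,\mu)$ and $c_{i,l}\in L_q(G_A,\mu)$, $e_{w,r}\in L_q(G_B,\mu)$.

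First I would compute the kernel of $AB$. Substituting \eqref{OpeqnsSKthmSupGenSeparedKnlsB} into \eqref{OpeqnsSKthmSupGenSeparedKnlsA}, interchanging the two integrations and collecting the inner integral $\int_{G_A}c_{i,l}(s)b_{u,r}(s)\,d\mu_s=Q_{G_A}(c_{i,l},b_{u,r})$ (with $Q$ as in \eqref{QGpairingDefinition}), one finds that $AB$ acts as an integral operator over $G_B$ whose kernel is $\sum \gamma_A^{k,i,l}\gamma_B^{u,w,r}\,a_{k,l}(t)\,Q_{G_A}(c_{i,l},b_{u,r})\,e_{w,r}(s)$, the sum ranging over all admissible indices. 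This is precisely the second term on the left-hand side of condition~\ref{thmBothIntOPSupGenSeptedKernels:cond1}.

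Next comes the heart of the computation, an induction on $j$ showing that $A^{j}$ is an integral operator over $G_A$ with kernel
\[
\sum \Bigl(\prod_{v=1}^{j}\gamma_A^{k_v,i_v,l_v}\Bigr)\,\tilde{\gamma}_{j}\,a_{k_1,l_1}(t)\,c_{i_j,l_j}(s),
\]
the sum being over all $(k_v,i_v,l_v)$ with $v=1,\dots,j$, where $\tilde{\gamma}_{j}=\prod_{v=1}^{j-1}Q_{G_A}(a_{k_{v+1},l_{v+1}},c_{i_v,l_v})$ is exactly the telescoping scalar defined in the statement (with $\tilde{\gamma}_{1}=1$). The base case $j=1$ is \eqref{OpeqnsSKthmSupGenSeparedKnlsA}; the inductive step composes $A$ on the left of $A^{j}$, producing one further pairing $Q_{G_A}(a_{k_1,l_1},c_{i_0,l_0})$ which, after relabelling and using the symmetry $Q_{G_A}(u,v)=Q_{G_A}(v,u)$, extends $\tilde{\gamma}_{j}$ to $\tilde{\gamma}_{j+1}$. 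I expect this bookkeeping of the nested index sums to be the main obstacle, though it is a notational rather than a conceptual one. Composing $B$ on the left of $A^{j}$ then inserts the factor $Q_{G_B}(e_{w,r},a_{k_1,l_1})$, so that $BF(A)=\delta_0 B+\sum_{j=1}^{n}\delta_j BA^{j}$ splits into a part supported on $G_B$ (the term $\delta_0 B$) and a part supported on $G_A$ (the sum over $j\geq 1$), the latter carrying exactly the kernel on the right-hand side of condition~\ref{thmBothIntOPSupGenSeptedKernels:cond1}.

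Finally, I would rewrite $AB=BF(A)$ as the statement that, for almost every $t$ and every $x\in L_p(X,\mu)$, $Q_{G_B}(f_t,x)=Q_{G_A}(g_t,x)$, where for fixed $t$ the function $f_t(s)$ collects the $G_B$-supported kernels (those of $AB$ and of $-\delta_0 B$) and $g_t(s)$ is the $G_A$-supported kernel of $\sum_{j\geq 1}\delta_j BA^{j}$; both lie in $L_q$ for almost every $t$, being finite combinations of the $e_{w,r}\in L_q(G_B,\mu)$ and the $c_{i_j,l_j}\in L_q(G_A,\mu)$ with coefficients finite almost everywhere in $t$. Applying Lemma~\ref{LemmaAllowInfSetsEqLp} with $G_1=G_B$, $G_2=G_A$ and $G=G_A\cap G_B$ turns this identity into its three equivalent pointwise forms: $f_t=g_t$ a.e.\ on $G$, which is condition~\ref{thmBothIntOPSupGenSeptedKernels:cond1}; $f_t=0$ a.e.\ on $G_B\setminus G$, which is condition~\ref{thmBothIntOPSupGenSeptedKernels:cond3}; and $g_t=0$ a.e.\ on $G_A\setminus G$, which is condition~\ref{thmBothIntOPSupGenSeptedKernels:cond2}. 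The one delicate point is the order of quantifiers in the reverse implication: $AB=BF(A)$ yields the scalar identity, for each $x$, only for almost every $t$, so to apply the lemma fibre-wise I would fix a countable dense subset of $L_p(X,\mu)$ (for $1\leq p<\infty$), obtain a single full-measure set of $t$ on which the identity holds for all those $x$, and extend to all $x$ by continuity of the functionals, treating the case $p=\infty$ by the indicator-function argument already used in the proof of Lemma~\ref{LemmaAllowInfSetsEqLp}. Since that lemma is a biconditional, the forward implication follows by reversing these steps.
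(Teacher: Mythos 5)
Your proposal is correct and follows essentially the same route as the paper's proof: an induction giving the separable kernel of $A^{j}$ with the telescoping scalar $\tilde{\gamma}_{j}$, explicit kernels for $AB$ and $BF(A)$, and then Lemma~\ref{LemmaAllowInfSetsEqLp} applied with $G_1=G_B$, $G_2=G_A$ to split the identity into the three conditions. Your extra care with the quantifier order (passing through a countable dense set of $x$ to get a single full-measure set of $t$) is a refinement the paper leaves implicit, but it does not change the argument.
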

\begin{proof}
We observe that if for all $k,i,l$, $1\leq k \leq l_{A,1}$, $1\leq i\leq l_{A,2}$, $1\leq l\leq l_{A,3}$, $1\leq u\leq l_{B,1},$ $1\leq w\leq l_{B,2}$, $1\leq r\leq l_{B,3}$, $a_{k,l},b_{u,r}\in L_p(X,\mu),\ 1\leq p\leq\infty$, $c_{i,l}\in L_q(G_A,\mu)$, $e_{w,r}\in L_q(G_B,\mu)$, where $1\leq q\leq \infty$, with $\frac{1}{p}+\frac{1}{q}=1,$ $L_p(X,\mu)$, $L_q(X,\mu)$ are linear spaces, then  operators $A$ and $B$ are well-defined. By direct calculation, we have
  \begin{eqnarray*}
  (A^2x)(t)&=&\int\limits_{G_A} \sum_{k_1=1}^{l_{A,1}}\sum_{i_1=1}^{l_{A,2}}\sum_{l_1=1}^{l_{A,3}} \gamma_A^{k,i,l} a_{k_1,l_1}(t)c_{i_1,l_1}(s)(Ax)(s)d\mu_s\\
  &=&\int\limits_{G_A} \sum_{(k_1,i_1, l_1=1)}^{(l_{A,1}, l_{A,2}, l_{A,3})}\sum_{(k_2,i_2, l_2=1)}^{(l_{A,1}, l_{A,2}, l_{A,3})} \gamma_A^{k_1,i_1,l_1}\gamma_A^{k_2,i_2,l_2} a_{k_1,l_1}(t)c_{i_1,l_1}(s) a_{k_2,l_2}(s)d\mu_s \\
  & &\cdot \int\limits_{G_A} c_{i_2,l_2}(\tau)x(\tau)d\mu_{\tau}\\
  &=& \sum_{(k_1,i_1, l_1=1)}^{(l_{A,1}, l_{A,2}, l_{A,3})}\sum_{(k_2,i_2, l_2=1)}^{(l_{A,1}, l_{A,2}, l_{A,3})} \gamma_A^{k_1,i_1,l_1}\gamma_A^{k_2,i_2,l_2} a_{k_1,l_1}(t)Q_{G_A}(a_{k_2,l_2},c_{i_1,l_1})\\
  & & \cdot \int\limits_{G_A} c_{i_2,l_2}(\tau)x(\tau)d\mu_\tau
  \\
  &=&
  \int\limits_{G_A} \sum_{(k_1,i_1, l_1=1)}^{(l_{A,1}, l_{A,2}, l_{A,3})}\sum_{(k_2,i_2, l_2=1)}^{(l_{A,1}, l_{A,2}, l_{A,3})} \prod_{v=1}^2 \gamma_A^{k_v,i_v,l_v} \tilde{\gamma}_{2} a_{k_1,l_1}(t)c_{i_2,l_2}(\tau)x(\tau)d\mu_\tau,
\end{eqnarray*}
for  almost every $t$.  We suppose that
\begin{equation}\label{AonCompositionIntegralOpSupGenSeparetedKernels}
\begin{array}{l}
(A^{m}x)(t)= \displaystyle
\int\limits_{G_A} \sum_{(k_1,i_1, l_1=1)}^{(l_{A,1}, l_{A,2}, l_{A,3})}\hspace{0mm}\ldots \hspace{0mm} \sum_{(k_m,i_m, l_m=1)}^{(l_{A,1}, l_{A,2}, l_{A,3})} \\
\hspace{4cm} \displaystyle \prod_{v=1}^m \gamma_A^{k_v,i_v,l_v} \tilde{\gamma}_m a_{k_1,l_1}(t) c_{i_m,l_m}(\tau)d\mu_\tau, \  m\ge 1
\end{array}
  \end{equation}
 for almost every $t$. Then
  \begin{align*}
& (A^{m+1}x)(t) = \int\limits_{G_A}\hspace{-0.5mm} \sum_{(k_1,i_1, l_1=1)}^{(l_{A,1}, l_{A,2}, l_{A,3})}
   \ldots \sum_{k_{m},i_{m}, l_{m}=1}^{l_{A,1}, l_{A,2}, l_{A,3}} \prod_{v=1}^{m} \gamma_A^{k_v,i_v,l_v} \tilde{\gamma}_m a_{k_1,l_1}(t) c_{i_m,l_m}(\tau)
   \\
    &\hspace{0,5cm} \cdot\Big(\ \int\limits_{G_A}  \sum_{k_{m+1},i_{m+1}, l_{m+1}=1}^{l_{A,1}, l_{A,2}, l_{A,3}} \gamma_A^{k_{m+1},i_{m+1},l_{m+1}} a_{k_{m+1},l_{m+1}}(\tau)c_{i_{m+1},l_{m+1}}(s)x(s)d\mu_s \Big)d\mu_\tau
    \\
    &\ = \int\limits_{G_A} \sum_{(k_1,i_1, l_1=1)}^{(l_{A,1}, l_{A,2}, l_{A,3})}\ldots \sum_{(k_{m+1},i_{m+1}, l_{m+1}=1)}^{(l_{A,1}, l_{A,2}, l_{A,3})} \prod_{v=1}^{m+1} \gamma_A^{k_v,i_v,l_v} \tilde{\gamma}_m a_{k_1,l_1}(t) \\
     &\hspace{4cm}\cdot Q_{G_A}(c_{i_m,l_m},a_{k_{m+1},l_{m+1}})
   c_{i_{m+1},l_{m+1}}(s)x(s)d\mu_s \\
    &\ = \int\limits_{G_A} \sum_{(k_1,i_1, l_1=1)}^{(l_{A,1}, l_{A,2}, l_{A,3})}\ldots \sum_{(k_{m+1},i_{m+1}, l_{m+1}=1)}^{(l_{A,1}, l_{A,2}, l_{A,3})} \prod_{v=1}^{m+1} \gamma_A^{k_v,i_v,l_v} \gamma_{m+1} a_{k_1,l_1}(t)  \\
     &\hspace{7cm} \cdot c_{i_{m+1},l_{m+1}}(s)x(s)d\mu_s
\end{align*}
for almost every $t$. Then, by induction \eqref{AonCompositionIntegralOpSupGenSeparetedKernels}  holds true
for all positive integers $m$. Therefore, we have
\begin{align*}
(F(A)x)(t)=&\resizebox{0.82\hsize}{!}{$\displaystyle  \delta_0 x(t)+\int\limits_{G_A} \sum_{j=1}^n   \delta_j  \sum_{(k_1,i_1, l_1=1)}^{(l_{A,1}, l_{A,2}, l_{A,3})}\ldots \sum_{(k_{j},i_{j}, l_{j}=1)}^{(l_{A,1}, l_{A,2}, l_{A,3})} \prod_{v=1}^{j} \gamma_A^{k_v,i_v,l_v} \tilde{\gamma}_{j}
  $}
\\
& \hspace{6cm} \cdot a_{k_1,l_1}(t)  c_{i_{j},l_{j}}(s)x(s)d\mu_s
\end{align*}
for almost every $t$. Then we compute,
  \begin{align*} 
& (ABx)(t)= \int\limits_{G_A} \sum\limits_{(k,i,l=1)}^{(l_{A,1},l_{A,2},l_{A,3})} \gamma_A^{k,i,l} a_{k,l}(t)c_{i,l}(s)
   \Big(\ \int\limits_{G_B}\sum\limits_{(u,w,r=1)}^{(l_{B,1},l_{B,2},l_{B,3})} \gamma_B^{u,w,r} b_{u,r}(s) \\
  & \hspace{8cm}  \cdot e_{w,r}(\tau)x(\tau)d\mu_\tau \Big)d\mu_s
   \\
&\int\limits_{G_B} \sum\limits_{(k,i,l=1)}^{(l_{A,1},l_{A,2},l_{A,3})}\sum\limits_{(u,w,r=1)}^{(l_{B,1},l_{B,2},l_{B,3})}\gamma_A^{k,i,l} \gamma_B^{u,w,r}  a_{k,l}(t)Q_{G_A}(c_{i,l}, b_{u,r}) e_{w,r}(\tau)x(\tau)d\mu_\tau, \\
  &
    (BF(A)x)(t)=\delta_0\int\limits_{G_B} \sum\limits_{(u,w,r=1)}^{(l_{B,1},l_{B,2},l_{B,3})}\gamma_B^{u,w,r} b_{u,r}(t)e_{w,r}(\tau)x(\tau)d\mu_\tau\\
   & + \int\limits_{G_B}\hspace{0mm} \sum\limits_{(u,w,r=1)}^{(l_{B,1},l_{B,2},l_{B,3})}\gamma_B^{u,w,r} b_{u,r}(t)e_{w,r}(s) \Big(\ \int\limits_{G_A} \sum\limits_{j=1}^n   \delta_j  \sum\limits_{(k_1,i_1, l_1=1)}^{(l_{A,1}, l_{A,2}, l_{A,3})}\hspace{0mm}\ldots \hspace{0mm}\sum\limits_{k_{j},i_{j}, l_{j}=1}^{l_{A,1}, l_{A,2}, l_{A,3}}
   \\
&  \Big(\prod_{v=1}^{j} \gamma_A^{k_v,i_v,l_v} \tilde{\gamma}_{j} a_{k_1,l_1}(s) c_{i_{j},l_{j}}(\tau)x(\tau)d\mu_\tau\Big)\Big)d\mu_s
\\
&= \int\limits_{G_B} \sum\limits_{(u,w,r=1)}^{(l_{B,1},l_{B,2},l_{B,3})} \delta_0 \gamma_B^{u,w,r} b_{u,r}(t)e_{w,r}(\tau)x(\tau)d\mu_\tau \\
& + \int\limits_{G_A}\sum\limits_{(u,w,r=1)}^{(l_{B,1},l_{B,2},l_{B,3})} \sum\limits_{j=1}^n  \sum\limits_{(k_1,i_1, l_1=1)}^{(l_{A,1}, l_{A,2}, l_{A,3})}\ldots \sum\limits_{(k_{j},i_{j}, l_{j}=1)}^{(l_{A,1}, l_{A,2}, l_{A,3})} \delta_j \tilde{\gamma}_{j}\gamma_B^{u,w,r} \prod_{v=1}^{j} \gamma_A^{k_v,i_v,l_v}  \\
& \hspace{5cm} \cdot Q_{G_B}(e_{w,r},a_{k_1,l_1})b_{u,r}(t) c_{i_{j},l_{j}}(\tau)x(\tau)d\mu_\tau.
  \end{align*}
Thus, $(ABx)(t)=(BF(A)x)(t)$ for all $x\in L_p( X,\mu)$ if and only if
 \begin{gather*}
  \int\limits_{G_B} \sum_{(k,i,l=1)}^{(l_{A,1},l_{A,2},l_{A,3})}\sum_{(u,w,r=1)}^{(l_{B,1},l_{B,2},l_{B,3})}\gamma_A^{k,i,l} \gamma_B^{u,w,r}  a_{k,l}(t)Q_{G_A}(c_{i,l},  b_{u,r}) e_{w,r}(\tau)x(\tau)d\mu_\tau\\
    =\int\limits_{G_B} \sum_{u,w,r=1}^{l_{B,1},l_{B,2},l_{B,3}} \delta_0 \gamma_B^{u,w,r} b_{u,r}(t)e_{w,r}(\tau)x(\tau)d\mu_\tau\\
 \resizebox{0.96\hsize}{!}{$\displaystyle
+ \int\limits_{G_A}\sum_{(u,w,r=1)}^{(l_{B,1},l_{B,2},l_{B,3})}  \sum_{j=1}^n  \sum_{(k_1,i_1, l_1=1)}^{(l_{A,1}, l_{A,2}, l_{A,3})}
\ldots \sum_{(k_{j},i_{j}, l_{j}=1)}^{(l_{A,1}, l_{A,2}, l_{A,3})} \delta_j  \tilde{\gamma}_{j}\gamma_B^{u,w,r} \prod_{v=1}^{j} \gamma_A^{k_v,i_v,l_v}
  $}
\\
\hspace{5cm} \cdot Q_{G_B}(e_{w,r},a_{k_1,l_1})b_{u,r}(t) c_{i_{j},l_{j}}(\tau)x(\tau)d\mu_\tau.
 \end{gather*}
Then by applying Lemma \ref{LemmaAllowInfSetsEqLp}, we conclude that $AB=BF(A)$ if and only if
    \begin{enumerate}[label=\textup{\arabic*.}, ref=\arabic*,leftmargin=*]
  \item for almost every $(t,s)\in  X\times G$,
            \begin{align*}
           &  -\sum_{u,w,r=1}^{l_{B,1},l_{B,2},l_{B,3}} \delta_0 \gamma_B^{u,w,r} b_{u,r}(t)e_{w,r}(s)
           \\
           &
           +\sum_{(k,i,l=1)}^{(l_{A,1},l_{A,2}, l_{A_3})}\sum_{(u,w,r=1)}^{(l_{B,1}, l_{B_2}, l_{B_3})} \gamma_A^{k,i,l} \gamma_B^{u,w,r}
            a_{k,l}(t)Q_{G_A}(c_{i,l},  b_{u,r} ) e_{w,r}(s) \\
    &
    =\sum_{(u,w,r=1)}^{(l_{B,1},l_{B,2},l_{B,3})}\sum_{j=1}^n  \sum_{(k_1,i_1, l_1=1)}^{(l_{A,1}, l_{A,2}, l_{A,3})}
    \ldots \sum_{(k_{j},i_{j}, l_{j}=1)}^{(l_{A,1}, l_{A,2}, l_{A,3})} \delta_j \tilde{\gamma}_{j}\gamma_B^{u,w,r} \prod_{v=1}^{j} \gamma_A^{k_v,i_v,l_v}
    \\
& \hspace{5cm}  \cdot Q_{G_B}(e_{w,r},a_{k_1,l_1})b_{u,r}(t) c_{i_{j},l_{j}}(s)
            \end{align*}

\item  for almost every $(t,s)\in  X\times (G_A\setminus G)$,
 \begin{align*}
&\sum_{(u,w,r=1)}^{(l_{B,1},l_{B,2},l_{B,3})}
\sum_{j=1}^n  \sum_{(k_1,i_1, l_1=1)}^{(l_{A,1}, l_{A,2}, l_{A,3})}
\ldots \sum_{(k_{j},i_{j}, l_{j}=1)}^{(l_{A,1}, l_{A,2}, l_{A,3})} \delta_j \tilde{\gamma}_{j}\gamma_B^{u,w,r} \prod_{v=1}^{j} \gamma_A^{k_v,i_v,l_v}
\\
& \hspace{3cm} \cdot Q_{G_B}(e_{w,r},a_{k_1,l_1})b_{u,r}(t) c_{i_{j},l_{j}}(s)=0;
 \end{align*}
\item for almost every $(t,s)\in  X\times (G_B\setminus G)$,
\begin{align*}
&
 \sum_{u,w,r=1}^{l_{B,1},l_{B,2},l_{B,3}} \delta_0 \gamma_B^{u,w,r} b_{u,r}(t)e_{w,r}(s)
 \\
 &
 =\sum_{(k,i,l=1)}^{(l_{A,1},l_{A,2},l_{A,3})}\sum_{(u,w,r=1)}^{(l_{B,1},l_{B,2},l_{B,3})} \gamma_A^{k,i,l} \gamma_B^{u,w,r}  a_{k,l}(t)
    Q_{G_A}(c_{i,l},  b_{u,r}) e_{w,r}(s).
\end{align*}
\end{enumerate} \QEDB
\end{proof}
\begin{remark}
Forms of operator $A:L_p(X,\mu)\to L_p(X,\mu)$, $1\le p\le \infty$ given in \eqref{OpeqnsSKthmGenSeparedKnlsAB} and \eqref{OpeqnsSKthmSupGenSeparedKnlsA} are equivalent. In fact,
if in \eqref{OpeqnsSKthmGenSeparedKnlsAB} we take \\ $l_{A,1}=l_{A,2}=l_A$, $l_{A,3}=1$, $\tilde{a}_{i,1}(\cdot)=a_i(\cdot)$, $1\le i\le l_A$, $\tilde{c}_{j,1}(\cdot)=c_{j}(\cdot)$, $1\le j\le l_A$,
$
  \gamma_A^{(i,j,1)}=\left\{\begin{array}{cc}
                   1, & i=j   \\
                   0, & i\not=j
                 \end{array}, \right.
$
we get \eqref{OpeqnsSKthmSupGenSeparedKnlsA}. On the other hand, from expression \eqref{OpeqnsSKthmSupGenSeparedKnlsA}, we assign for each triple $(i,j,k)\in \mathbb{N}^3$, $1\le i\le l_{A,1}$, $1\le j\le l_{A,2}$, $1\le k\le l_{A,3}$, the number
$l=(i-1)l_{A,2}l_{A,3}+(j-1)l_{A,3}+k$ and functions $\tilde{a}_l(\cdot)=\gamma_A^{(i,j,k)}a_{i,k}(\cdot)$, $\tilde{c}_l(\cdot)=c_{j,k}(\cdot)$. Define the partial order relation $\preceq$
 in $\mathbb{N}^3\times\mathbb{N}^3$ as follows
 \begin{equation*}
  \resizebox{0.98\hsize}{!}{$\displaystyle (i_1,j_1,k_1)\preceq (i_2,j_2,k_2) \mbox{ if } i_1 < i_2 \mbox{ or } (i_1=i_2 \mbox{ and } j_1<j_2) \mbox{ or }
   (i_1=i_2,\, j_1=j_2,\, k_1\le k_2). $}
 \end{equation*}
It is reflexive, anti-symmetric and transitive, the reader can see the corresponding definitions in \cite{FollandRA,KolmogorovFominETFFAbookVol2}. It is reflexive because for each triple $(i,j,k)\in \mathbb{N}^3$ we have
$i=i,\, j=j,\, k=k $ hence $(i,j,k)\preceq (i,j,k)$. If two triples $(i_1,j_1,k_1),(i_2,j_2,k_2)\in \mathbb{N}^3$ satisfy $(i_1,j_1,k_1)\preceq (i_2,j_2,k_2)$ and $(i_2,j_2,k_2)\preceq (i_1,j_1,k_1)$ we have
\begin{eqnarray*}
  ( i_1< i_2 \mbox{ or } i_1=i_2 ) &\mbox{ and }&  (i_2< i_1 \mbox{ or } i_2=i_1)\\
  ( j_1< j_2 \mbox{ or } j_1=j_2 ) &\mbox{ and }&  (j_2< j_1 \mbox{ or } j_2=j_1)\\
  ( k_1\leq  k_2  ) &\mbox{ and }&  (k_2\leq  k_1 ).
\end{eqnarray*}
These conditions imply that $i_1=i_2$, $j_1=j_2$ and $k_1=k_2$, and consequently $(i_1,j_1,k_1)=(i_2,j_2,k_2)$. Therefore, the relation $\preceq$ is anti-symmetric. Suppose now that $(i_1,j_1,k_1)$, $(i_2,j_2,k_2)$, $(i_3,j_3,k_3)\in \mathbb{N}^3$, satisfy $(i_1,j_1,k_1)\preceq (i_2,j_2,k_2)$ and
$(i_2,j_2,k_2)\preceq (i_3,j_3,k_3)$. We have
\begin{eqnarray*}
 && i_1 < i_2 \mbox{ or } (i_1=i_2 \mbox{ and } j_1<j_2) \mbox{ or }
   (i_1=i_2,\, j_1=j_2,\, k_1\le k_2),\\
&&  i_2 < i_3 \mbox{ or } (i_2=i_3 \mbox{ and } j_2<j_3) \mbox{ or }
   (i_2=i_3,\, j_2=j_3,\, k_2\le k_3),
\end{eqnarray*}
which imply
\begin{equation*}
i_1 < i_3 \mbox{ or } (i_1=i_3 \mbox{ and } j_1<j_3) \mbox{ or }
   (i_1=i_3,\, j_1=j_3,\, k_1\le k_3).
\end{equation*}
Hence $(i_1,j_1,k_1)\preceq (i_3,j_3,k_3)$, so the relation $\preceq$ is transitive and thus it is partial order relation. Moreover relation $\preceq$ is total order since for any two triples $(i_1,j_1,k_1)\in \mathbb{N}^3$ and $(i_2,j_2,k_2)\in \mathbb{N}^3$ we have either $(i_1,j_1,k_1)\preceq (i_2,j_2,k_2)$ or $(i_2,j_2,k_2) \preceq (i_1,j_1,k_1)$. Therefore, when $ (1,1,1)\preceq (i,j,k) \preceq (l_{A,1}, l_{A,2}, l_{A,3}),$
 $l=(i-1)l_{A,2}l_{A,3}+(j-1)l_{A,3}+k$ is uniquely defined and $1\le l \le l_{A,1}\cdot l_{A,2}\cdot l_{A,3}$. Hence, $l_A=l_{A,1}\cdot l_{A,2}\cdot l_{A,3}$ in expression \eqref{OpeqnsSKthmGenSeparedKnlsAB}.
\end{remark}

From now on, we will consider in statements that operators $A$ and $B$ are given by \eqref{OpeqnsSKthmGenSeparedKnlsAB} which is a simpler form of linear integral operator with general separated kernel.
\begin{theorem}\label{thmBothIntOPGenSeptedKernels}
Let $(X,\Sigma,\mu)$ be $\sigma$-finite measure space. Let $A:L_p(X,\mu)\to L_p(X,\mu)$,  $B:L_p(X,\mu)\to L_p(X,\mu)$, $1\le p\le\infty$ be nonzero operators defined as follows
\begin{equation*} 
  (Ax)(t)= \int\limits_{G_A} \sum _{i=1}^{l_A} a_i(t)c_i(s)x(s)d\mu_s,\quad (Bx)(t)= \int\limits_{G_B}\sum_{j=1}^{l_B} b_j(t)e_j(s)x(s)d\mu_s,
\end{equation*}
for almost every $t$, where the index in $\mu_s$ indicates the variable of integration, $G_A\in \Sigma$ and $G_B\in \Sigma$, $a_i,b_j\in L_p(X,\mu)$, $c_i\in L_q(G_A,\mu)$, $e_j\in L_q(G_B,\mu)$, $i,j, l_A,l_B$ are positive integers such that $1\leq i\leq  l_A$,  $1\leq j\leq  l_B$ and  $1\leq q\leq\infty$ such that $\frac{1}{p}+\frac{1}{q}=1$. Consider a polynomial defined by
$F(z)=\sum\limits_{j=0}^{n} \delta_j z^j$, where $\delta_j \in\mathbb{R}$ $j=0,1,2,\ldots,n$. Let $ G=G_A\cap G_B,$ and
\[
 \gamma_{i_1}=1,\quad \gamma_{i_1,\ldots,i_m}=\prod_{l=1}^{m-1} Q_{G_A} (a_{i_{l+1}},c_{i_l}),\ m\ge 2.
\]
where $Q_{\Lambda}(u,v)$, $\Lambda\in\Sigma$, is defined by \eqref{QGpairingDefinition}. Then
$
  AB=BF(A)
$
if and only if  the following conditions are fulfilled:
\begin{enumerate}[label=\textup{\arabic*.}, ref=\arabic*]
  \item \label{thmBothIntOPGenSeptedKernels:cond1}
        for almost every $(t,s)\in  X\times G$,
            \begin{gather*}
            - \sum_{k=1}^{l_B} \delta_0 b_k(t)e_k(s)+\sum_{k=1}^{l_B}\sum_{m=1}^{l_A}a_m(t)Q_{G_A}(b_k,c_m)e_k(s)\\
             =\sum_{k=1}^{l_B}\sum_{j=1}^{n}\sum_{i_1,\ldots,i_j=1}^{l_A} \delta_j b_k(t)Q_{G_B}(e_k,a_{i_1})\gamma_{i_1,\ldots,i_j}c_{i_j}(s);
            \end{gather*}
\item  \label{thmBothIntOPGenSeptedKernels:cond2}  for almost every $(t,s)\in  X\times (G_A\setminus G)$,
 \begin{equation*}
 \sum_{k=1}^{l_B}\sum_{j=1}^n \sum_{i_1,\ldots, i_j=1}^{l_A} \delta_j b_k(t)Q_{G_B}(e_k,a_{i_1}) \gamma_{i_1,\ldots,i_j} c_{i_j}(s)=0;
 \end{equation*}
\item \label{thmBothIntOPGenSeptedKernels:cond3} for almost every $(t,s)\in  X\times (G_B\setminus G)$,
\begin{equation*}
 \sum_{k=1}^{l_B} \delta_0 b_k(t)e_k(s)=\sum_{k=1}^{l_B} \sum_{m=1}^{l_A} Q_{G_A}(b_k,c_m)a_m(t)e_k(s).
\end{equation*}
\end{enumerate}
\end{theorem}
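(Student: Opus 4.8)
The plan is to mirror the argument used for Theorem~\ref{thmBothIntOPSupGenSeptedKernels}, now carried out directly in the simpler single-index notation of \eqref{OpeqnsSKthmGenSeparedKnlsAB}; indeed, by the equivalence of the two operator forms established in the Remark, this statement could also be read off as a specialization of Theorem~\ref{thmBothIntOPSupGenSeptedKernels}, but a self-contained computation is cleaner. Well-definedness and boundedness of $A$ and $B$ on $L_p(X,\mu)$ were already secured before the statement by H\"older's inequality, so I would take these for granted and concentrate on the algebraic identity $AB=BF(A)$.

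First I would establish, by induction on $m$, the closed form
\[
 (A^{m}x)(t)=\int\limits_{G_A}\sum_{i_1,\ldots,i_m=1}^{l_A}\gamma_{i_1,\ldots,i_m}\,a_{i_1}(t)\,c_{i_m}(s)\,x(s)\,d\mu_s,\qquad m\ge 1,
\]
for almost every $t$. The base case $m=1$ is just the definition of $A$ together with $\gamma_{i_1}=1$. For the step from $m$ to $m+1$ one composes one further factor of $A$; interchanging the (finite) sums with the integral and integrating in the intermediate variable produces the pairing $Q_{G_A}(a_{i_{m+1}},c_{i_m})$, which is precisely the factor carrying $\gamma_{i_1,\ldots,i_m}$ to $\gamma_{i_1,\ldots,i_{m+1}}$, while $a_{i_1}(t)$ and $c_{i_{m+1}}(s)$ remain the outer factors. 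Summing against the coefficients $\delta_j$ then yields $F(A)$, keeping the $\delta_0 x(t)$ term separate since it is not of integral type.

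Next I would compute $AB$ and $BF(A)$ by direct substitution and interchange of the finite sums with integration. For $AB$, inserting $(Bx)(s)$ into the $A$-integral and integrating out the variable common to $G_A$ collapses the inner integral into the pairing $Q_{G_A}(b_k,c_m)$, yielding an operator with kernel supported on $X\times G_B$. For $BF(A)$, the $\delta_0$-part contributes a kernel again supported on $X\times G_B$, while the integral part, after interchanging integrals, produces the pairing $Q_{G_B}(e_k,a_{i_1})$ together with the factor $\gamma_{i_1,\ldots,i_j}$ and a kernel supported on $X\times G_A$. Thus the requirement $ABx=BF(A)x$ for all $x\in L_p(X,\mu)$ becomes an equality between integral functionals whose kernels (in the integration variable) live over $G_B$ on one side and over $G_B$ and $G_A$ on the other.

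Finally I would, after moving the $\delta_0$-term to the left, invoke Lemma~\ref{LemmaAllowInfSetsEqLp} for almost every fixed $t$, applied to the two resulting pairings in the integration variable over $G_B$ and over $G_A$, with $G=G_A\cap G_B$. This converts the functional equality into pointwise almost-everywhere equality of kernels, split across the three regions $X\times G$, $X\times(G_A\setminus G)$ and $X\times(G_B\setminus G)$, giving exactly conditions~\ref{thmBothIntOPGenSeptedKernels:cond1}--\ref{thmBothIntOPGenSeptedKernels:cond3}; since the lemma covers the full conjugate range $1\le p,q\le\infty$, the argument is uniform in $p$. The main obstacle is the induction: one must track the telescoping products $\gamma_{i_1,\ldots,i_m}$ carefully and check that the pairing created at each step slots correctly into the multi-index coefficient so that $i_1$ (carried by $a_{i_1}(t)$) and $i_m$ (carried by $c_{i_m}(s)$) stay as the outer indices. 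The interchanges of summation and integration are harmless because all sums are finite and, by the integrability hypotheses on $a_i,b_j,c_i,e_j$, each integrand lies in $L_1$ for almost every value of the outer variable.
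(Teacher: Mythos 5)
Your proposal is correct and follows essentially the same route as the paper's own proof: an induction giving the closed form of $A^m$ via the telescoping coefficients $\gamma_{i_1,\ldots,i_m}$, direct computation of $AB$ and $BF(A)$, and an application of Lemma~\ref{LemmaAllowInfSetsEqLp} to the resulting kernels over $G_A$ and $G_B$ to split the identity into the three conditions on $X\times G$, $X\times(G_A\setminus G)$ and $X\times(G_B\setminus G)$. The paper likewise opts for the self-contained computation rather than specializing Theorem~\ref{thmBothIntOPSupGenSeptedKernels}.
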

\begin{proof}
We observe that since for each $i,j$, $1\le i \le l_A$, $1\le j\le l_B$,  $a_i,b_j\in L_p(X,\mu),\ 1\le p\le\infty$, $c_i\in L_q(G_A,\mu)$, $e_j\in L_q(G_B,\mu)$, where $1\le q\le \infty$, with $\frac{1}{p}+\frac{1}{q}=1,$ $L_p(X,\mu)$, $L_q(X,\mu)$ are linear spaces, then  operators $A$ and $B$ are well-defined. By direct calculation, we have
  \begin{eqnarray*}
  (A^2x)(t)&=&\int\limits_{G_A} \sum_{i=1}^{l_A} a_i(t)c_i(s)(Ax)(s)d\mu_s\\
  &=&\int\limits_{G_A} \sum_{i_1, i_2=1}^{l_A}a_{i_1}(t)c_{i_1}(s) a_{i_2}(s)d\mu_s\int\limits_{G_A} c_{i_2}(\tau)x(\tau)d\mu_{\tau}\\
  &=& \sum_{i_1, i_2=1}^{l_A} a_{i_1}(t)Q_{G_A}(a_{i_2},c_{i_1})\int\limits_{G_A} c_{i_2}(\tau)x(\tau)d\mu_\tau
  \\
  &=&
  \int\limits_{G_A} \sum_{i_1,i_2=1}^{l_A}\gamma_{i_1,i_2}a_{i_1}(t)c_{i_2}(\tau)x(\tau)d\mu_\tau,
\\
   (A^3x)(t)&=&\int\limits_{G_A} \sum_{i_1=1}^{l_A} a_{i_1}(t)c_{i_1}(s)\left(\,\int\limits_{G_A}\sum_{i_2,i_3=1}^{l_A} a_{i_2}(s)Q_{G_A}(a_{i_3},c_{i_3})c_{i_2}(\tau)x(\tau)d\mu_\tau \right)d\mu_s \\
 &=& \sum_{i_1,i_2,i_3=1}^{l_A} \int\limits_{G_A} a_{i_1}(t)Q_{G_A}(a_{i_3},c_{i_2})  c_{i_1}(s)a_{i_2}(s)d\mu_s   \int\limits_{G_A} c_{i_3}(\tau)x(\tau)d\mu_\tau \\
 &=& \int\limits_{G_A} \sum_{i_1,i_2,i_3=1}^{l_A}  Q_{G_A}(a_{i_3},c_{i_2})Q_{G_A}(a_{i_2},c_{i_1}) a_{i_1}(t)c_{i_3}(\tau)x(\tau)d\mu_\tau \\
 &=& \int\limits_{G_A} \sum_{i_1,i_2,i_3=1}^{l_A} \gamma_{i_1,i_2,i_3} a_{i_1}(t)c_{i_3}(\tau)x(\tau)d\mu_\tau.
\end{eqnarray*}
for  almost every $t$.  We suppose that
  \begin{equation*} 
  (A^{m}x)(t)=\int\limits_{G_A} \sum_{i_1,\ldots,i_m=1}^{l_A}\gamma_{i_1,\ldots,i_m}a_{i_1}(t)c_{i_m}(\tau)x(\tau)d\mu_\tau,\quad m=1,2,\ldots
  \end{equation*}
for  almost every $t$. Then
  \begin{align*}\nonumber
    &\resizebox{0.95\hsize}{!}{$\displaystyle (A^{m+1}x)(t)=\hspace{-1mm}\int\limits_{G_A} \hspace{-1mm} \sum_{i_1,\ldots,i_m=1}^{l_A} \hspace{-3mm}\gamma_{i_1,\ldots,i_m}a_{i_1}(t)c_{i_m}(\tau) \left(\int\limits_{G_A} \sum_{i_{m+1}=1}^{l_A} a_{i_{m+1}}(\tau)c_{i_{m+1}}(s)d\mu_s \right)d\mu_\tau $}\\
    &\ =  \int\limits_{G_A} \gamma_{i_1,\ldots,i_{m+1}}a_{i_1}(t)c_{i_{m+1}}(\tau)x(\tau)d\mu_\tau.
\end{align*}
for almost every $t$. Therefore, we have
\begin{equation*}
(F(A)x)(t)=\delta_0 x(t)+\int\limits_{G_A} \sum_{j=1}^n\sum_{i_1,\ldots,i_m=1}^{l_A} \gamma_{i_1,\ldots,i_j} \delta_j a_{i_1}(t)c_{i_j}(\tau)x(\tau)d\mu_\tau
\end{equation*}
 for almost every $t$. Then we compute,
  \begin{align}\label{CompABProofThmBothIntOpGenSeparatedKnDI}
  & \begin{array}{lll}
  (ABx)(t)&=&\displaystyle \int\limits_{G_A} \sum_{m=1}^{l_A} a_m(t)c_m(s)d\mu_s\int\limits_{G_B}\sum_{k=1}^{l_B} b_k(s) e_k(\tau) x(\tau)d\mu_{\tau} \\
  &=& \displaystyle \int\limits_{G_B} \sum_{m=1}^{l_A}\sum_{k=1}^{l_B} Q_{G_A}(b_k,c_m) a_m(t) e_k(\tau) x(\tau)d\mu_{\tau},
  \end{array}
  \\
  \nonumber
   & (BF(A)x)(t)=\int\limits_{G_B} \sum_{k=1}^{l_B} \delta_0 b_k(t)e_k(\tau)x(\tau)d\mu_{\tau}\\ \nonumber
   &+ \int\limits_{G_B}\sum_{k=1}^{l_B} b_k(t)e_k(s) \int\limits_{G_A} \sum_{j=1}^n \sum_{i_1,\ldots,i_j=1}^{l_A} \delta_j \gamma_{i_1,\ldots,i_j} a_{i_1}(s)c_{i_j}(\tau) x(\tau)d\mu_\tau   \\
\nonumber
&= \int\limits_{G_B} \sum_{k=1}^{l_B} \delta_0 b_k(t)e_k(\tau) x(\tau)d\mu_{\tau}\\ \label{EqBFAProofThmBothIntOpGenSepratKnDI}
& + \int\limits_{G_A}\sum_{k=1}^{l_B}\sum_{j=1}^n \sum_{i_1,\ldots,i_j=1}^{l_A} \delta_j \gamma_{i_1,\ldots,i_j} Q_{G_B}(e_k,a_{i_1})b_k(t) c_{i_j}(\tau)x(\tau)d\mu_{\tau}.
  \end{align}
Thus, $(ABx)(t)=(BF(A)x)(t)$ for all $x\in L_p( X,\mu)$ if and only if
 \begin{eqnarray*}
 &&  \int\limits_{G_B} \left(-\sum_{k=1}^{l_B} \delta_0 b_k(t)e_k(s)+\sum_{k=1}^{l_B}\sum_{m=1}^{l_A} Q_{G_A}(b_k,c_m)a_m(t)e_k(s)\right) x(s)d\mu_s\\
  &&  =\int\limits_{G_A} \left(\sum_{k=1}^{l_B}\sum_{j=1}^n \sum_{i_1,\ldots,i_j=1}^{l_A} \delta_j \gamma_{i_1,\ldots,i_j} Q_{G_B}(e_k,a_{i_1})b_k(t)c_{i_j}(s)\right)x(s)d\mu_s.
 \end{eqnarray*}
Then by applying Lemma \ref{LemmaAllowInfSetsEqLp}, we conclude that $AB=BF(A)$ if and only if
    \begin{enumerate}[label=\textup{\arabic*.}, ref=\arabic*]
  \item
        for almost every $(t,s)\in  X\times G$,
            \begin{eqnarray*}
            && -\sum_{k=1}^{l_B} \delta_0 b_k(t)e_k(s)+\sum_{k=1}^{l_B}\sum_{m=1}^{l_A}a_m(t)Q_{G_A}(b_k,c_m)e_k(s)\\
            && =\sum_{k=1}^{l_B}\sum_{j=1}^{n}\sum_{i_1,\ldots,i_j=1}^{l_A} \delta_j b_k(t)Q_{G_B}(e_k,a_{i_1})\gamma_{i_1,\ldots,i_j}c_{i_j}(s),
            \end{eqnarray*}
\item    for almost every $(t,s)\in  X\times (G_A\setminus G)$,
 \begin{equation*}
 \sum_{k=1}^{l_B}\sum_{j=1}^n \sum_{i_1,\ldots, i_j=1}^{l_A} \delta_j b_k(t)Q_{G_B}(e_k,a_{i_1}) \gamma_{i_1,\ldots,i_j} c_{i_j}(s)=0,
 \end{equation*}
\item for almost every $(t,s)\in  X\times (G_B\setminus G)$,
\begin{equation}
 \sum_{k=1}^{l_B} \delta_0 b_k(t)e_k(s)=\sum_{k=1}^{l_B} \sum_{m=1}^{l_A} Q_{G_A}(b_k,c_m)a_m(t)e_k(s).
\tag*{\qed}\end{equation}
\end{enumerate}
\end{proof}
\begin{remark}\label{RemOpDefInSameIntervalGenSepKernels}
In Theorem \ref{thmBothIntOPGenSeptedKernels} when $G_A=G_B=G$, conditions \ref{thmBothIntOPGenSeptedKernels:cond2} and \ref{thmBothIntOPGenSeptedKernels:cond3} are taken on set of measure zero, so we can ignore them. Thus, we only remain with condition \ref{thmBothIntOPGenSeptedKernels:cond1}. When $G_A\not=G_B$ we need to check also conditions \ref{thmBothIntOPGenSeptedKernels:cond2} and \ref{thmBothIntOPGenSeptedKernels:cond3} outside the intersection  $G=G_A\cap G_B$.
\end{remark}

The following corollary is a special case of Theorem \ref{thmBothIntOPGenSeptedKernels} for the important class of
covariance commutation relations associated to affine (degree $1$) polynomials $F$.

\begin{corollary}
Let $(X,\Sigma,\mu)$ be $\sigma$-finite measure space. Let $A:L_p(X,\mu)\to L_p(X,\mu)$,  $B:L_p(X,\mu)\to L_p(X,\mu)$, $1\le p\le\infty$ be nonzero operators defined as follows
\begin{equation*}
  (Ax)(t)= \int\limits_{G_A} \sum _{i=1}^{l_A} a_i(t)c_i(s)x(s)d\mu_s,\quad (Bx)(t)= \int\limits_{G_B}\sum_{j=1}^{l_B} b_j(t)e_j(s)x(s)d\mu_s,
\end{equation*}
for almost every $t$, where the index in $\mu_s$ indicates the variable of integration, $G_A\in \Sigma$ and $G_B\in \Sigma$, $a_i,b_j\in L_p(X,\mu)$, $c_i\in L_q(G_A,\mu)$, $e_j\in L_q(G_B,\mu)$, $i,j, l_A,l_B$ are positive integers such that $1\le i\le  l_A$, $1\le j\le  l_B$ and  $1\le q\le\infty$ with $\frac{1}{p}+\frac{1}{q}=1$. Consider a polynomial
$F(z)= \delta_0 +\delta_1z$, where $\delta_j \in\mathbb{R}$, $j=0,1$. Let $ G=G_A\cap G_B$ and
$Q_{\Lambda}(u,v)$, $\Lambda\in\Sigma$, is defined by \eqref{QGpairingDefinition}. Then $AB=\delta_0B +\delta_1 BA$
if and only if  the following conditions are fulfilled:
\begin{enumerate}[label=\textup{\arabic*.}, ref=\arabic*]
  \item
        for almost every $(t,s)\in  X\times G$,
            \begin{equation*}
      \resizebox{0.93\hsize}{!}{$\displaystyle    \hspace{-4mm}   -\sum_{k=1}^{l_B} \delta_0 b_k(t)e_k(s)+\sum_{k=1}^{l_B}\sum_{m=1}^{l_A}a_m(t)Q_{G_A}(b_k,c_m)e_k(s)
             =\sum_{k=1}^{l_B}\sum_{i_1=1}^{l_A} \delta_1 b_k(t)Q_{G_B}(e_k,a_{i_1})c_{i_1}(s)$}
            \end{equation*}
\item    for almost every $(t,s)\in  X\times (G_A\setminus G)$,
 \begin{equation*}
 \sum_{k=1}^{l_B}\sum_{i_1=1}^{l_A} \delta_1 b_k(t)Q_{G_B}(e_k,a_{i_1})  c_{i_1}(s)=0.
 \end{equation*}
\item for almost every $(t,s)\in  X\times (G_B\setminus G)$,
\begin{equation*}
 \sum_{k=1}^{l_B} \delta_0 b_k(t)e_k(s)=\sum_{k=1}^{l_B} \sum_{m=1}^{l_A} Q_{G_A}(b_k,c_m)a_m(t)e_k(s).
\end{equation*}
\end{enumerate}
\end{corollary}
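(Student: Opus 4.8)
The plan is to obtain this corollary as a direct specialization of Theorem \ref{thmBothIntOPGenSeptedKernels} to the affine case $n=1$, $F(z) = \delta_0 + \delta_1 z$. First I would note that for this choice of $F$ the covariance relation $AB = BF(A)$ reads $AB = B(\delta_0 I + \delta_1 A) = \delta_0 B + \delta_1 BA$, which is exactly the relation appearing in the statement. Since the operators $A$ and $B$ here are of the same form \eqref{OpeqnsSKthmGenSeparedKnlsAB} and satisfy the same hypotheses on $a_i, b_j, c_i, e_j$ (with $a_i,b_j\in L_p(X,\mu)$, $c_i\in L_q(G_A,\mu)$, $e_j\in L_q(G_B,\mu)$, $\tfrac1p+\tfrac1q=1$) as in Theorem \ref{thmBothIntOPGenSeptedKernels}, all that remains is to evaluate the three kernel conditions of that theorem when $n=1$.

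Next I would carry out the substitution $n=1$ in each condition. In conditions \ref{thmBothIntOPGenSeptedKernels:cond1} and \ref{thmBothIntOPGenSeptedKernels:cond2} the polynomial sum $\sum_{j=1}^{n}$ collapses to the single term $j=1$, so the multi-index sum $\sum_{i_1,\ldots,i_j=1}^{l_A}$ reduces to $\sum_{i_1=1}^{l_A}$; moreover $\gamma_{i_1}=1$ by the definition of the products $\gamma_{i_1,\ldots,i_m}$ (the empty product for $m=1$), and $c_{i_j}=c_{i_1}$. Substituting $\gamma_{i_1}=1$ and $\delta_j=\delta_1$ turns the right-hand side of condition \ref{thmBothIntOPGenSeptedKernels:cond1} into $\sum_{k=1}^{l_B}\sum_{i_1=1}^{l_A}\delta_1 b_k(t)Q_{G_B}(e_k,a_{i_1})c_{i_1}(s)$, and condition \ref{thmBothIntOPGenSeptedKernels:cond2} into the vanishing of this same expression, exactly as in the corollary. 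Condition \ref{thmBothIntOPGenSeptedKernels:cond3} involves only $\delta_0$ and is independent of $n$, so it carries over verbatim.

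There is no genuine obstacle here, since the corollary is a transcription of Theorem \ref{thmBothIntOPGenSeptedKernels} under $n=1$. The only point requiring a moment's care is confirming that the empty-product convention yields $\gamma_{i_1}=1$, so that the degree-one term of the general reordering formula collapses to the stated single-sum expression. Once that is observed, the equivalence of $AB=\delta_0 B+\delta_1 BA$ with the three listed conditions follows immediately from the theorem.
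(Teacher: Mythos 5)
Your proposal is correct and follows exactly the paper's route: the paper's own proof is the one-line observation that the corollary is Theorem \ref{thmBothIntOPGenSeptedKernels} with $n=1$, and your more detailed verification of how the multi-index sums and the convention $\gamma_{i_1}=1$ collapse in that case is a faithful elaboration of the same specialization.
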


\begin{proof}
It follows by Theorem \ref{thmBothIntOPGenSeptedKernels} when $n=1$. \QEDB
\end{proof}

\begin{corollary}\label{CorthmBothIntOpGensepKernellsCommutativityOnly}
Let $(X,\Sigma,\mu)$ be $\sigma$-finite measure space. Let $A:L_p(X,\mu)\to L_p(X,\mu)$,  $B:L_p(X,\mu)\to L_p(X,\mu)$, $1\le p\le\infty$ be nonzero operators defined as follows
\begin{equation*}
  (Ax)(t)= \int\limits_{G_A} \sum _{i=1}^{l_A} a_i(t)c_i(s)x(s)d\mu_s,\quad (Bx)(t)= \int\limits_{G_B}\sum_{j=1}^{l_B} b_j(t)e_j(s)x(s)d\mu_s,
\end{equation*}
for almost every $t$, where the index in $\mu_s$ indicates the variable of integration, $G_A\in \Sigma$ and $G_B\in \Sigma$, $a_i,b_j\in L_p(X,\mu)$, $c_i\in L_q(G_A,\mu)$, $e_j\in L_q(G_B,\mu)$, $i,j, l_A,l_B$ are positive integers such that $1\le i\le  l_A$, $1\le j\le  l_B$ and $1\le q\le\infty$ with $\frac{1}{p}+\frac{1}{q}=1$. Let $ G=G_A\cap G_B$ and
$Q_{\Lambda}(u,v)$, $\Lambda\in\Sigma$, is defined by \eqref{QGpairingDefinition}. Then operators $A$ and $B$ commute if and only if  the following conditions are fulfilled:
\begin{enumerate}[label=\textup{\arabic*.}, ref=\arabic*]
  \item\label{CorthmBothIntOpGensepKernellsCommutativityOnly:cond1}  for almost every $(t,s)\in  X\times G$,
            \begin{equation*}
            \sum_{k=1}^{l_B}\sum_{m=1}^{l_A}a_m(t)Q_{G_A}(b_k,c_m)e_k(s)=
             \sum_{k=1}^{l_B}\sum_{i_1=1}^{l_A}  b_k(t)Q_{G_B}(e_k,a_{i_1})c_{i_1}(s),
            \end{equation*}
\item\label{CorthmBothIntOpGensepKernellsCommutativityOnly:cond2}   for almost every $(t,s)\in  X\times (G_A\setminus G)$,
 \begin{equation*}
 \sum_{k=1}^{l_B}\sum_{i_1=1}^{l_A}  b_k(t)Q_{G_B}(e_k,a_{i_1})  c_{i_1}(s)=0,
 \end{equation*}
\item\label{CorthmBothIntOpGensepKernellsCommutativityOnly:cond3} for almost every $(t,s)\in  X\times (G_B\setminus G)$,
\begin{equation*}
 \sum_{k=1}^{l_B} \sum_{m=1}^{l_A} Q_{G_A}(b_k,c_m)a_m(t)e_k(s)=0.
\end{equation*}
\end{enumerate}
\end{corollary}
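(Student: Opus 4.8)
The plan is to recognize commutativity as the special case of the covariance relation already settled in Theorem \ref{thmBothIntOPGenSeptedKernels}. Indeed, $A$ and $B$ commute precisely when $AB=BA$, and this is exactly the relation $AB=BF(A)$ for the identity polynomial $F(z)=z$, that is, the degree $n=1$ polynomial with $\delta_0=0$ and $\delta_1=1$. So I would simply invoke Theorem \ref{thmBothIntOPGenSeptedKernels} with these particular coefficients and read off what its three conditions become. No new computation of $AB$ or $BA$ is needed; all of it is already inside the cited theorem.

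First I would track how the coefficients $\gamma_{i_1,\ldots,i_j}$ and $\delta_j$ collapse under this specialization. Since $n=1$, the inner sums over $j$ run only over $j=1$, so the only multi-index reordering coefficient that can appear is $\gamma_{i_1}=1$ by definition, and the products $\prod_{l=1}^{m-1}Q_{G_A}(a_{i_{l+1}},c_{i_l})$ defining the higher $\gamma$'s never enter. With $\delta_0=0$ the terms $\sum_{k}\delta_0 b_k(t)e_k(s)$ drop out of conditions 1 and 3 of the theorem, and with $\delta_1=1$ both the right-hand side of its condition 1 and the left-hand side of its condition 2 reduce to $\sum_{k}\sum_{i_1} b_k(t)Q_{G_B}(e_k,a_{i_1})c_{i_1}(s)$.

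Substituting these simplifications, condition 1 of Theorem \ref{thmBothIntOPGenSeptedKernels} becomes exactly condition \ref{CorthmBothIntOpGensepKernellsCommutativityOnly:cond1} on $X\times G$; its condition 2 becomes condition \ref{CorthmBothIntOpGensepKernellsCommutativityOnly:cond2} on $X\times(G_A\setminus G)$, where the surviving sum is set to zero; and its condition 3 becomes condition \ref{CorthmBothIntOpGensepKernellsCommutativityOnly:cond3} on $X\times(G_B\setminus G)$, where setting $\delta_0=0$ forces $\sum_{k}\sum_{m}Q_{G_A}(b_k,c_m)a_m(t)e_k(s)=0$. Since these three are precisely the stated conditions, the equivalence follows at once.

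I do not expect any genuine obstacle, since the entire content is already carried by Theorem \ref{thmBothIntOPGenSeptedKernels}; the work is purely bookkeeping. The only point deserving a moment of care is confirming that the product defining $\gamma_{i_1,\ldots,i_j}$ plays no role once $n=1$, so that the higher-order reordering coefficients never contribute and each of the three conditions simplifies to its stated single-index form.
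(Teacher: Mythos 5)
Your proposal is correct and follows exactly the paper's own route: the paper proves this corollary by invoking Theorem \ref{thmBothIntOPGenSeptedKernels} with $n=1$, $\delta_0=0$, $\delta_1=1$, which is precisely your specialization. The extra bookkeeping you spell out (that only $\gamma_{i_1}=1$ survives and the $\delta_0$ terms vanish) is accurate and just makes explicit what the paper leaves implicit.
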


\begin{proof}
It follows by Theorem \ref{thmBothIntOPGenSeptedKernels} when $n=1$, $\delta_0=0$ and $\delta_1=1$. \QEDB
\end{proof}

The following corollary of Theorem \ref{thmBothIntOPGenSeptedKernels} is concerned with  representations by integral operators of another important family of covariance commutation relations associated to monomials $F$.
\begin{corollary}\label{corDiedroRelBothIntOPGenSeptedKernels}
Let $(X,\Sigma,\mu)$ be $\sigma$-finite measure space. Let $A:L_p(X,\mu)\to L_p(X,\mu)$,  $B:L_p(X,\mu)\to L_p(X,\mu)$, $1\le p\le\infty$ be nonzero operators defined as follows
\begin{equation*} 
  (Ax)(t)= \int\limits_{G_A} \sum _{i=1}^{l_A} a_i(t)c_i(s)x(s)d\mu_s,\quad (Bx)(t)= \int\limits_{G_B}\sum_{j=1}^{l_B} b_j(t)e_j(s)x(s)d\mu_s,
\end{equation*}
for almost every $t$, where the index in $\mu_s$ indicates the variable of integration, $G_A\in \Sigma$ and $G_B\in \Sigma$, $a_i,b_j\in L_p(X,\mu)$, $c_i\in L_q(G_A,\mu)$, $e_j\in L_q(G_B,\mu)$, $i,j, l_A,l_B$ are positive integers such that $1\le i\le  l_A$, $1\le j\le  l_B$ and  $1\le q\le\infty$ with $\frac{1}{p}+\frac{1}{q}=1$. Consider a polynomial $F:\mathbb{R}\to \mathbb{R}$ defined by
$F(z)=\delta z^d$, where $\delta \in\mathbb{R}$, $d\in\mathbb{Z}$, $d>0$. Let $ G=G_A\cap G_B,$ and
\[
 \gamma_{i_1}=1,\quad \gamma_{i_1,\ldots,i_m}=\prod_{l=1}^{m-1} Q_{G_A} (a_{i_{l+1}},c_{i_l}),\ m\ge 2.
\]
where $Q_{\Lambda}(u,v)$, $\Lambda\in\Sigma$, is defined by \eqref{QGpairingDefinition}. Then
$
  AB=BF(A)
$
if and only if  the following conditions are fulfilled:
\begin{enumerate}[leftmargin=*, label=\textup{\arabic*.}, ref=\arabic*]
  \item
        for almost every $(t,s)\in  X\times G$,
            \begin{equation*}\hspace{-0.4cm}
           \sum_{k=1}^{l_B}\sum_{m=1}^{l_A}a_m(t)Q_{G_A}(b_k,c_m)e_k(s)=
             \sum_{k=1}^{l_B}\sum_{i_1,\ldots,i_d=1}^{l_A} \delta b_k(t)Q_{G_B}(e_k,a_{i_1})\gamma_{i_1,\ldots,i_d}c_{i_d}(s)
            \end{equation*}
\item  for almost every $(t,s)\in  X\times (G_A\setminus G)$,
 \begin{equation*}
 \sum_{k=1}^{l_B} \sum_{i_1,\ldots, i_d=1}^{l_A} \delta b_k(t)Q_{G_B}(e_k,a_{i_1}) \gamma_{i_1,\ldots,i_d} c_{i_d}(s)=0.
 \end{equation*}
\item for almost every $(t,s)\in  X\times (G_B\setminus G)$,
\begin{equation*}
\sum_{k=1}^{l_B} \sum_{m=1}^{l_A} Q_{G_A}(b_k,c_m)a_m(t)e_k(s)=0.
\end{equation*}
\end{enumerate}
\end{corollary}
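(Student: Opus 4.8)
The plan is to obtain this statement as an immediate specialization of Theorem~\ref{thmBothIntOPGenSeptedKernels}. The monomial $F(z)=\delta z^d$ is precisely the polynomial $F(z)=\sum_{j=0}^{n}\delta_j z^j$ of that theorem under the choices $n=d$, $\delta_d=\delta$, and $\delta_j=0$ for every $j\neq d$; in particular the constant term vanishes, $\delta_0=0$. First I would check that the operators $A$, $B$, the set $G=G_A\cap G_B$, and the coefficients $\gamma_{i_1,\dots,i_m}$ appearing here satisfy the hypotheses of Theorem~\ref{thmBothIntOPGenSeptedKernels} verbatim. This is immediate, since the definitions of $A$, $B$, $G$, and of $\gamma_{i_1}=1$, $\gamma_{i_1,\dots,i_m}=\prod_{l=1}^{m-1}Q_{G_A}(a_{i_{l+1}},c_{i_l})$ coincide word for word in the two statements, and $d>0$ guarantees $n=d\geq 1$.

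Next I would substitute these coefficients into each of the three conditions of the theorem and read off the simplifications. In condition~\ref{thmBothIntOPGenSeptedKernels:cond1}, the summand $\sum_{k}\delta_0 b_k(t)e_k(s)$ disappears because $\delta_0=0$, while on the right-hand side the outer sum over $j$ collapses to the single index $j=d$ (with $\delta_d=\delta$), producing exactly condition~1 of the corollary with the product $\gamma_{i_1,\dots,i_d}$ and inner multi-index range $i_1,\dots,i_d$. Similarly, condition~\ref{thmBothIntOPGenSeptedKernels:cond2} retains only the $j=d$ term and becomes condition~2. Finally, in condition~\ref{thmBothIntOPGenSeptedKernels:cond3} the left-hand side $\sum_k\delta_0 b_k(t)e_k(s)$ vanishes (again by $\delta_0=0$), leaving $\sum_{k}\sum_{m}Q_{G_A}(b_k,c_m)a_m(t)e_k(s)=0$, which is condition~3.

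There is no genuine obstacle here, as the argument is a substitution into an already-established equivalence rather than a new computation. The only point requiring care is bookkeeping: verifying that every suppressed coefficient is indeed zero, and confirming that the surviving $j=d$ contribution carries the correct factor $\gamma_{i_1,\dots,i_d}$ together with the correct summation range over the multi-index $(i_1,\dots,i_d)$. Once this is checked, the three conditions of Theorem~\ref{thmBothIntOPGenSeptedKernels} reduce term-by-term to the three conditions stated in the corollary, establishing the equivalence with $AB=BF(A)$.
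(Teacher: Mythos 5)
Your proposal is correct and matches the paper's own proof, which obtains the corollary from Theorem \ref{thmBothIntOPGenSeptedKernels} by setting $n=d$, $\delta_d=\delta$, and $\delta_j=0$ for $j\neq d$. Your write-up simply spells out the term-by-term bookkeeping that the paper leaves implicit.
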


\begin{proof}
It follows from Theorem \ref{thmBothIntOPGenSeptedKernels} when $n=d$, $\delta_j=0$, for $j\in \{0,\ldots, n\}\setminus \{d\}$ and $\delta_d=\delta$. \QEDB
\end{proof}

\begin{corollary}\label{corDiedroRelBothIntOPGenSeptedKernelsOrtogonality}
Let $(X,\Sigma,\mu)$ be $\sigma$-finite measure space. Let $A:L_p(X,\mu)\to L_p(X,\mu)$,  $B:L_p(X,\mu)\to L_p(X,\mu)$, $1\le p\le\infty$ be nonzero operators defined as follows
\begin{equation*}
  (Ax)(t)= \int\limits_{G} \sum _{i=1}^{l_A} a_i(t)c_i(s)x(s)d\mu_s,\quad (Bx)(t)= \int\limits_{G}\sum_{j=1}^{l_B} b_j(t)e_j(s)x(s)d\mu_s,
\end{equation*}
for almost every $t$, where the index in $\mu_s$ indicates the variable of integration, $G\in \Sigma$, $a_i,b_j\in L_p(X,\mu)$, $c_i,e_j\in L_q(G,\mu)$, $i,j, l_A,l_B$ are positive integers such that $1\leq i\leq  l_A$, $1\leq j\leq  l_B$ and
$1\leq q\leq\infty$ with $\frac{1}{p}+\frac{1}{q}=1$.  If for all positive integers $m,k$ such that $1\leq m \leq l_A$, $1\leq k\leq l_B$ the following holds true
\begin{align} \label{CondOrtogonalityCorrMonomialIntOpGensepKernels}
Q_G(b_k,c_m)=0, \quad Q_G(e_k,a_m)=0, 
\end{align}
then $AB=\delta BA^d=0$ for some $\delta\in\mathbb{R}$ and some $d\in\mathbb{Z}$, $d>0$, where $Q_{\Lambda}(u,v)$, $\Lambda\in\Sigma$, is defined by \eqref{QGpairingDefinition}.
\end{corollary}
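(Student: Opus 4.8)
The plan is to specialize the explicit formulas for $AB$ and $A^d$ already obtained in the proof of Theorem \ref{thmBothIntOPGenSeptedKernels} to the present situation $G_A=G_B=G$, and then to observe that the orthogonality hypotheses \eqref{CondOrtogonalityCorrMonomialIntOpGensepKernels} force both $AB$ and $BA^d$ to vanish. Since each of these operators is the zero operator, the identity $AB=\delta BA^d=0$ then holds trivially for \emph{every} $\delta\in\mathbb{R}$ and every integer $d>0$, and in particular for some such choice, which is all the statement claims.

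First I would handle $AB$. Putting $G_A=G_B=G$ in the computation \eqref{CompABProofThmBothIntOpGenSeparatedKnDI} yields
\[
(ABx)(t)=\int\limits_{G}\sum_{m=1}^{l_A}\sum_{k=1}^{l_B} Q_{G}(b_k,c_m)\,a_m(t)\,e_k(\tau)\,x(\tau)\,d\mu_\tau .
\]
The first condition in \eqref{CondOrtogonalityCorrMonomialIntOpGensepKernels}, namely $Q_G(b_k,c_m)=0$ for all $1\le m\le l_A$ and $1\le k\le l_B$, annihilates every summand, so $(ABx)(t)=0$ for almost every $t$ and all $x\in L_p(X,\mu)$; that is, $AB=0$. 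Next I would treat $BA^d$. Using the induction formula for $A^d$ established in the proof of Theorem \ref{thmBothIntOPGenSeptedKernels}, composing with $B$ and integrating over $G$, I would obtain
\[
(BA^dx)(t)=\int\limits_{G}\sum_{k=1}^{l_B}\sum_{i_1,\ldots,i_d=1}^{l_A}\gamma_{i_1,\ldots,i_d}\,Q_{G}(e_k,a_{i_1})\,b_k(t)\,c_{i_d}(\tau)\,x(\tau)\,d\mu_\tau .
\]
Here the key observation is that the leading index $i_1$ runs over all of $1,\ldots,l_A$, so the second hypothesis $Q_G(e_k,a_{i_1})=0$ applies to every term irrespective of the values of the coefficients $\gamma_{i_1,\ldots,i_d}$; hence $BA^d=0$. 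Combining the two computations gives $AB=\delta BA^d=0$.

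The argument presents no genuine obstacle, being a direct specialization of Theorem \ref{thmBothIntOPGenSeptedKernels} to the case $G_A=G_B=G$; the only point requiring care is to confirm that the two orthogonality conditions are indexed compatibly with the pairings that actually appear. Specifically, the condition on $Q_G(b_k,c_m)$ matches the sole pairing occurring in $AB$, while the condition on $Q_G(e_k,a_m)$ matches the leading pairing $Q_G(e_k,a_{i_1})$ present in every summand of $BA^d$, so that both operators collapse to zero regardless of $\delta$ and $d$.
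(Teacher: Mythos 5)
Your proposal is correct and follows essentially the same route as the paper: the paper invokes Corollary \ref{corDiedroRelBothIntOPGenSeptedKernels} with $G_A=G_B=G$ and observes that the orthogonality hypotheses make both sides of the resulting kernel identity vanish, which is exactly your observation that the explicit kernels of $AB$ and $BA^d$ from the proof of Theorem \ref{thmBothIntOPGenSeptedKernels} are annihilated term by term. Your remark that the conclusion then holds for \emph{every} $\delta$ and $d$ is a small but accurate sharpening of the stated ``for some.''
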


\begin{proof}
By applying Corollary \ref{corDiedroRelBothIntOPGenSeptedKernels}
when $G_A=G_B=G$ we have $AB=\delta BA^d$ if and only if
            \begin{equation*}
           \sum_{k=1}^{l_B}\sum_{m=1}^{l_A}a_m(t)Q_{G}(b_k,c_m)e_k(s)=
             \sum_{k=1}^{l_B}\sum_{i_1,\ldots,i_d=1}^{l_A} \delta b_k(t)Q_{G}(e_k,a_{i_1})\gamma_{i_1,\ldots,i_d}c_{i_d}(s)
            \end{equation*}
for almost every $(t,s)\in  X\times G$. By direct computation one gets that
if condition \eqref{CondOrtogonalityCorrMonomialIntOpGensepKernels} holds then
the commutation relation $AB=\delta BA^d$ is satisfied and both sides are equal zero. \QEDB
\end{proof}

\begin{corollary}[\cite{DjinjaEtAll_IntOpOverMeasureSpaces}]\label{CorIntOpGenSepKernelsOnlyOneTerm}
Let $(X,\Sigma,\mu)$ be $\sigma$-finite measure space. Let $A:L_p(X,\mu)\to L_p(X,\mu)$,  $B:L_p(X,\mu)\to L_p(X,\mu)$, $1\le p\le\infty$ be nonzero operators defined as follows
\begin{equation*}
  (Ax)(t)= \int\limits_{G_A} a(t)c(s)x(s)d\mu_s,\quad (Bx)(t)= \int\limits_{G_B} b(t)e(s)x(s)d\mu_s,
\end{equation*}
for almost every $t$, where the index in $\mu_s$ indicates the variable of integration, $G_A\in \Sigma$ and $G_B\in \Sigma$, $a,b\in L_p(X,\mu)$, $c\in L_q(G_A,\mu)$, $e\in L_q(G_B,\mu)$, $1\le q\le\infty$, $\frac{1}{p}+\frac{1}{q}=1$. Consider a polynomial defined by
$F(z)=\sum\limits_{j=0}^{n} \delta_j z^j$, where $\delta_j \in\mathbb{R}$, $j=0,\ldots,n$. Let $ G=G_A\cap G_B,$ and
\[
  k_1=\delta_1Q_{G_B} (a,e)+\sum_{j=2}^{n} \delta_j Q_{G_A} (a,c)^{j-1}Q_{G_B} (a,e), \quad k_2=Q_{G_A} (b,c),
\]
where $Q_{\Lambda}(u,v)$, $\Lambda\in\Sigma$, is defined by \eqref{QGpairingDefinition}. Then
$
  AB=BF(A)
$
if and only if  the following conditions are fulfilled:
\begin{enumerate}[label=\textup{\arabic*.}, ref=\arabic*]
  \item\label{CorIntOpGenSepKernelsOnlyOneTerm:item1}
       \begin{enumerate}[label=\textup{\alph*)}, ref=\alph*)]
       \item if almost every $(t,s)\in {\rm supp }\, b\times [({\rm supp }\, e)\cap G]$, then
               \begin{enumerate}[label=\textup{(\roman*)}, ref=(\roman*)]
                 \item if $k_2\not=0$ then  $c(s)k_1= \lambda e(s)$  and $a(t)=\frac{(\delta_0+\lambda)b(t)}{k_2}$ for some real scalar $\lambda$,
                 \item if $k_2=0$ then  $k_1c(s)=-\delta_0e(s)$.
               \end{enumerate}
       \item If $t\not\in {\rm supp }\, b$ then either $k_2=0$ or  $a(t)=0$ for almost all $t\not\in {\rm supp }\, b$.
       \item If $s\in G\setminus {\rm supp }\, e$ then either $k_1=0$ or  $c(s)=0$ for almost all \\ $s\in G\setminus  {\rm supp }\, e$.
        \end{enumerate}
\item\label{CorIntOpGenSepKernelsOnlyOneTerm:item2} $k_2 a(t)-\delta_0 b(t)=0$ for almost every $t\in X$ or $e(s)=0$ for almost every
    \\
    $s\in G_B\setminus G$.
\item\label{CorIntOpGenSepKernelsOnlyOneTerm:item3} $k_1=0$ or  $c(s)=0$ for almost every $s\in G_A\setminus G$.
\end{enumerate}
\end{corollary}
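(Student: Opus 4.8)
The plan is to obtain the statement as the single-term specialization ($l_A=l_B=1$) of Theorem~\ref{thmBothIntOPGenSeptedKernels}. First I would set $a_1=a$, $c_1=c$, $b_1=b$, $e_1=e$ and note that with a single summation index the multi-index coefficient collapses to $\gamma_{1,\dots,1}=Q_{G_A}(a,c)^{m-1}$ for the $m$-fold product, while the pairing \eqref{QGpairingDefinition} is symmetric, so $Q_{G_B}(e,a)=Q_{G_B}(a,e)$. Substituting into the right-hand side of condition~\ref{thmBothIntOPGenSeptedKernels:cond1} of Theorem~\ref{thmBothIntOPGenSeptedKernels} and factoring out $b(t)c(s)$, the sum $\sum_{j=1}^{n}\delta_j Q_{G_A}(a,c)^{j-1}Q_{G_B}(a,e)$ is exactly $k_1$, and $Q_{G_A}(b,c)=k_2$. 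Thus the three conditions of the theorem become, almost everywhere on the indicated product sets,
\begin{align*}
[k_2 a(t)-\delta_0 b(t)]e(s) &= k_1 b(t)c(s), && (t,s)\in X\times G,\\
k_1 b(t)c(s) &= 0, && (t,s)\in X\times(G_A\setminus G),\\
[\delta_0 b(t)-k_2 a(t)]e(s) &= 0, && (t,s)\in X\times(G_B\setminus G).
\end{align*}

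Next I would dispatch the two easy identities. The second and third have the product form $f(t)g(s)=0$ almost everywhere on a product set; by Fubini (equivalently, the standard fact that a product of a function of $t$ and a function of $s$ vanishes a.e. iff one factor vanishes a.e.), together with $b\not\equiv 0$ since $B\neq 0$, the second yields condition~\ref{CorIntOpGenSepKernelsOnlyOneTerm:item3} and the third yields condition~\ref{CorIntOpGenSepKernelsOnlyOneTerm:item2}.

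The main work, and the principal obstacle, is condition~\ref{CorIntOpGenSepKernelsOnlyOneTerm:item1}, extracted from the first identity by a region-by-region analysis according to whether $t\in{\rm supp}\,b$ and whether $s\in({\rm supp}\,e)\cap G$. On ${\rm supp}\,b\times[({\rm supp}\,e)\cap G]$ both $b(t)$ and $e(s)$ are nonzero, so dividing by $b(t)e(s)$ separates the variables: the left side depends only on $t$ and the right only on $s$, forcing both to equal a common constant $\lambda$. This gives $k_1 c(s)=\lambda e(s)$ and $k_2 a(t)=(\delta_0+\lambda)b(t)$, hence (i) when $k_2\neq 0$ and (ii) when $k_2=0$. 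On the complementary strips, where $t\notin{\rm supp}\,b$ with $s\in({\rm supp}\,e)\cap G$ and where $s\in G\setminus{\rm supp}\,e$ with $t\in{\rm supp}\,b$, the identity collapses to $k_2 a(t)e(s)=0$ and $k_1 b(t)c(s)=0$ respectively, producing (b) and (c) after using that the relevant support sets have positive measure.

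The delicate points I expect to need care with are: justifying that the separated quantities are genuinely constant over the almost-everywhere statement, which relies on the product-set structure of the domain and on $\mu$ being $\sigma$-finite so that Fubini applies; verifying that ${\rm supp}\,b$ and $({\rm supp}\,e)\cap G$ carry positive measure exactly where the division or cancellation is performed (guaranteed since $A,B$ are nonzero); and the passage between the ``either/or'' formulations and the pointwise-a.e. conclusions. Finally, to close the equivalence I would check the converse, namely that conditions~\ref{CorIntOpGenSepKernelsOnlyOneTerm:item1}--\ref{CorIntOpGenSepKernelsOnlyOneTerm:item3} reassemble into the three displayed identities on $X\times G$, $X\times(G_A\setminus G)$, and $X\times(G_B\setminus G)$; this is routine once the case distinctions above are in place.
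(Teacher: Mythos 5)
Your proposal is correct and follows essentially the same route as the paper: the paper likewise obtains the corollary as the $l_A=l_B=1$ specialization (presenting the computation of $A^m$, $AB$, $BF(A)$ explicitly and invoking Lemma \ref{LemmaAllowInfSetsEqLp} to arrive at the same three identities on $X\times G$, $X\times(G_A\setminus G)$ and $X\times(G_B\setminus G)$), and then performs the identical region-by-region analysis on ${\rm supp}\,b\times[({\rm supp}\,e)\cap G]$ and its complementary strips, including the same separation-of-variables step producing the constant $\lambda$. No substantive differences.
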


\begin{proof}  
This is a particular case of Theorem \ref{thmBothIntOPGenSeptedKernels} when $l_A=l_B=1$. In order to
explain how the more detailed conditions in \ref{CorIntOpGenSepKernelsOnlyOneTerm:item1}, \ref{CorIntOpGenSepKernelsOnlyOneTerm:item2} and \ref{CorIntOpGenSepKernelsOnlyOneTerm:item3} in Corollary \ref{CorIntOpGenSepKernelsOnlyOneTerm} arise, we present the independent detailed proof
following \cite{DjinjaEtAll_IntOpOverMeasureSpaces}.
We observe that since $a,b\in L_p(X,\mu),\ 1\leq p\leq\infty$, $c\in L_q(G_A,\mu)$, $e\in L_q(G_B,\mu)$, where $1\leq q\leq \infty$, with $\frac{1}{p}+\frac{1}{q}=1,$ then by applying H\"older inequality we conclude that operators $A$ and $B$ are well-defined. By direct calculation, we have
  \begin{eqnarray*}
  (A^2x)(t)&=&\int\limits_{G_A} a(t)c(s)(Ax)(s)d\mu_s=\int\limits_{G_A} a(t)c(s)a(s)d\mu_s\int\limits_{G_A} c(\tau_1)x(\tau_1)d\mu_{\tau_1}\\
  &=&Q_{G_A}(a,c)(Ax)(t),
  \\
    (A^3x)(t)&=& A(A^2x)(t)=Q_{G_A}(a,c)(A^2x)(t)=Q_{G_A}(a,c)^2(Ax)(t) 
\end{eqnarray*}
  for almost every $t$.  We suppose that
  \begin{equation*} 
  (A^{m}x)(t)=Q_{G_A}(a,c)^{m-1} (Ax)(t),\quad m=1,2,\ldots
  \end{equation*}
 for almost every $t$. Then
  \begin{eqnarray*}
    (A^{m+1}x)(t)&=& A(A^{m}x)(t)=Q_{G_A}(a,c)^{m-1} (A^2x)(t)=Q_{G_A}(a,c)^{m} (Ax)(t)
\end{eqnarray*}
for almost every $t$. Then, we compute
  \begin{align*}
  & \begin{array}{lll}
  (ABx)(t)&=& \int\limits_{G_A} a(t)c(s)b(s)d\mu_s\int\limits_{G_B} e(\tau_1) x(\tau_1)d\mu_{\tau_1} \\
  &=& k_2 \int\limits_{G_B} a(t) e(\tau_1) x(\tau_1)d\mu_{\tau_1},
  \end{array}\\
  & (F(A)x)(t)=\delta_0 x(t)+a(t)\sum_{j=1}^{n} \delta_j
    \left(Q_{G_A}(a,c) \right)^{j-1}\int\limits_{G_A} c(\tau) x(\tau) d\mu_\tau, \\
   & (BF(A)x)(t)=\delta_0 b(t)\int\limits_{G_B}  e(\tau_1)x(\tau_1)d\mu_{\tau_1}\\
   &+ b(t)\sum_{j=1}^{n} \delta_j \left(Q_{G_A}(a,c) \right)^{j-1}\int\limits_{G_B} e(\tau)a(\tau)d\mu_\tau \int\limits_{G_A} c(\tau_1) x(\tau_1) d\mu_{\tau_1} \\
&= \delta_0 b(t)\int\limits_{G_B} e(\tau_1) x(\tau_1)d\mu_{\tau_1}+b(t)k_1\int\limits_{G_A} c(\tau_1) x(\tau_1)d\mu_{\tau_1},
  \end{align*}
  for almost every $t$. Thus, $(AB)x=(BF(A))x$ for all $x\in L_p( X,\mu)$ if and only if
 \[
   \int\limits_{G_B} [k_2a(t)-\delta_0 b(t)]e(s)x(s)d\mu_s= \int\limits_{G_A} k_1 b(t)c(s)x(s)d\mu_s.
 \]
Then by Lemma \ref{LemmaAllowInfSetsEqLp}, we conclude that, $AB=BF(A)$ if and only if
    \begin{enumerate}[label=\textup{\arabic*.}, ref=\arabic*]
      \item for almost every $(t,s)\in X\times G$,
      \begin{equation*} 
      [k_2a(t)-\delta_0 b(t)]e(s)=k_1 b(t)c(s);
      \end{equation*}
      \item $k_2a(t)-\delta_0 b(t)=0$ for almost every $t\in X$ or $e(s)=0$ for almost every
      \\$s\in G_B\setminus G;$
      \item $k_1=0$ or $b(t)=0$ for almost every $t\in X$ or $c(s)=0$ for almost every $s\in G_A\setminus G$.
     \end{enumerate}
 We can rewrite the first condition as follows:
      \begin{enumerate}[leftmargin=*,label=\textup{\alph*)}, ref=\alph*)]
        \item Suppose  $(t,s)\in {\rm supp }\, b\times [({\rm supp }\, e)\cap G]$.
        \begin{enumerate}[leftmargin=*,label=\textup{(\roman*)}, ref=(\roman*)]
        \item  If  $k_2\neq 0,$ then
      $k_1\frac{c(s)}{e(s)}=k_2\frac{a(t)}{b(t)}-\delta_0=\lambda$ for some real scalar $\lambda$. From this, it follows that
       $k_1 c(s)=e(s)\lambda$ and $a(t)=\frac{\delta_0+\lambda}{k_2} b(t)$.
       \item If $k_2=0$ then $-\delta_0 b(t)e(s)=k_1 b(t)c(s)$, which yields $-k_1c(s)=-\delta_0e(s).$
       \end{enumerate}
        \item If $t\not\in {\rm supp }\, b$ then $k_2a(t)e(s)=0$ from which we get that either $k_2=0$ or  $a(t)=0$ for almost all $t\not\in {\rm supp }\, b $ or $e(s)=0$ almost everywhere (this implies $B=0$).
      \item If $s\in G\setminus  {\rm supp }\, e,$ then $k_1b(t)c(s)=0$  which implies that either $k_1=0$ or  $c(s)=0$ for almost all $s\in G\setminus  {\rm supp }\, e$, or $b(t)=0$ for almost all $t$ (this implies $B=0$). \QEDB
       \end{enumerate}
\end{proof}

\begin{proposition}\label{PropSimilarthmBothIntOpGensepKernellsCommutativityCommutator}
Let $(X,\Sigma,\mu)$ be $\sigma$-finite measure space. Let
$$A:L_p(X,\mu)\to L_p(X,\mu),\ B:L_p(X,\mu)\to L_p(X,\mu),\ 1\le p\le\infty$$
be nonzero operators defined as follows
\begin{equation*}
  (Ax)(t)= \int\limits_{G} \sum _{i=1}^{l_A} a_i(t)c_i(s)x(s)d\mu_s,\quad (Bx)(t)= \int\limits_{G}\sum_{j=1}^{l_B} b_j(t)e_j(s)x(s)d\mu_s,
\end{equation*}
for almost every $t$, where the index in $\mu_s$ indicates the variable of integration, $G_A\in \Sigma$ and $G_B\in \Sigma$, $a_i,b_j\in L_p(X,\mu)$, $c_i\in L_q(G,\mu)$, $e_j\in L_q(G,\mu)$, $i,j, l_A,l_B$ are positive integers such that $1\leq i\leq  l_A$, $1\leq j\leq  l_B$ and  $1\leq q\leq\infty$ with $\frac{1}{p}+\frac{1}{q}=1$. Let $ G=G_A\cap G_B$ and
$Q_{\Lambda}(u,v)$, $\Lambda\in\Sigma$, is defined by \eqref{QGpairingDefinition}. Then,  for almost every $t\in  X$,
\begin{enumerate}[label=\textup{\arabic*.}, ref=\arabic*]
  \item $(AB)x(t)=\int\limits_{G} \sum\limits_{k=1}^{l_B}\sum\limits_{m=1}^{l_A}a_m(t)Q_{G_A}(b_k,c_m)e_k(s) x(s)d\mu_s$;
  \item $(BA)x(t)=\int\limits_{G} \sum\limits_{k=1}^{l_B}\sum\limits_{i_1=1}^{l_A}  b_k(t)Q_{G_B}(e_k,a_{i_1})c_{i_1}(s)x(s)d\mu_s $;
  \item $(AB-BA)x(t)=\int\limits_{G} \left(\sum\limits_{k=1}^{l_B}\sum\limits_{m=1}^{l_A}a_m(t)Q_{G_A}(b_k,c_m)e_k(s)\right.\\
  \left. \hspace{2.5cm} -\sum\limits_{k=1}^{l_B}\sum\limits_{i_1=1}^{l_A}  b_k(t)Q_{G_B}(e_k,a_{i_1})c_{i_1}(s)\right)x(s)d\mu_s$.
\end{enumerate}
\end{proposition}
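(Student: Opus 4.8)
The plan is to obtain all three formulas by direct computation, substituting one operator into the other and recognizing the resulting inner integrals as the pairings $Q_G(\cdot,\cdot)$ defined in \eqref{QGpairingDefinition}; no commutation hypothesis is needed, so these are simply identities valid for every $x\in L_p(X,\mu)$. Throughout I would use the symmetry $Q_G(u,v)=Q_G(v,u)$, which is immediate from \eqref{QGpairingDefinition}, to match the order of arguments appearing in the statement, and I would note that since both operators act through the single set $G$ one has $Q_{G_A}=Q_{G_B}=Q_G$ on the relevant functions.

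For the first identity I would substitute the definition of $B$ into that of $A$,
\begin{equation*}
(ABx)(t)=\int\limits_{G} \sum_{m=1}^{l_A} a_m(t)c_m(s)\left(\int\limits_{G}\sum_{k=1}^{l_B} b_k(s)e_k(\tau)x(\tau)d\mu_\tau\right)d\mu_s,
\end{equation*}
interchange the order of integration, and pull the $t$- and $\tau$-dependent factors outside the $s$-integral. The inner integral $\int_G c_m(s)b_k(s)d\mu_s$ is precisely $Q_{G_A}(b_k,c_m)$, which gives statement~1. In fact this computation has already been carried out in the proof of Theorem~\ref{thmBothIntOPGenSeptedKernels}, where it appears as \eqref{CompABProofThmBothIntOpGenSeparatedKnDI} with $G_A=G_B=G$, so I would simply invoke it.

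For the second identity I would proceed symmetrically, substituting $A$ into $B$, so that the inner integral becomes $\int_G e_k(s)a_{i_1}(s)d\mu_s=Q_{G_B}(e_k,a_{i_1})$ and yields statement~2. Equivalently, one recognizes that $BA=BF(A)$ for the monomial $F(z)=z$ and reads off the formula for $(BF(A)x)(t)$ already established in the proof of Theorem~\ref{thmBothIntOPGenSeptedKernels}, specialized to $n=1$, $\delta_0=0$, $\delta_1=1$ and using $\gamma_{i_1}=1$. Statement~3 then follows at once by linearity of the integral, subtracting the integrand of statement~2 from that of statement~1 under the common integral over $G$.

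The only point requiring care is the interchange of the order of integration, which I would justify by Fubini's theorem: under the hypotheses $a_m,b_k\in L_p(X,\mu)$ and $c_m,e_k\in L_q(G,\mu)$ with $\tfrac1p+\tfrac1q=1$, the H\"older estimates used earlier to show that $A$ and $B$ are well defined and bounded guarantee that all the iterated integrals are absolutely convergent, so Fubini applies and the rearrangement is legitimate. Since no genuine obstacle remains beyond this routine justification, the proposition is essentially a bookkeeping restatement of computations already present in the proof of Theorem~\ref{thmBothIntOPGenSeptedKernels}.
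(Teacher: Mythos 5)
Your proposal is correct and follows essentially the same route as the paper, which proves the proposition simply by citing the formulas \eqref{CompABProofThmBothIntOpGenSeparatedKnDI} for $(ABx)(t)$ and \eqref{EqBFAProofThmBothIntOpGenSepratKnDI} for $(BF(A)x)(t)$ specialized to $F(z)=z$, exactly as you suggest. Your additional remarks on Fubini and the symmetry of $Q_G(\cdot,\cdot)$ are sound but only make explicit what the paper leaves implicit from the earlier well-definedness discussion.
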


\begin{proof}
It follows from \eqref{CompABProofThmBothIntOpGenSeparatedKnDI} and \eqref{EqBFAProofThmBothIntOpGenSepratKnDI} when $F(z)=z$. \QEDB
\end{proof}

\begin{proposition}\label{PropositionCommutatorZeroFourSequenceCond}
 Let $(\mathbb{R},\Sigma,\mu)$ be the standard Lebesgue measure space on the real line. Let $A:L_p(\mathbb{R},\mu)\to L_p(\mathbb{R},\mu)$, $B:L_p(\mathbb{R},\mu)\to L_p(\mathbb{R},\mu)$, $1\le p\le \infty$ be operators defined as follows
  \begin{eqnarray*}
     (A x)(t)&=&\int\limits_{\alpha_1}^{\beta_1}I_{[\alpha,\beta]}(t)[\theta_{A,1}\sin(\omega t)\cos(\omega s)+\theta_{A,2}\cos(\omega t)\cos(\omega s)\\
     && +\theta_{A,3}\sin(\omega t)\sin(\omega s)+\theta_{A,4}\cos(\omega t)\sin(\omega s)]x(s)d\mu_s,
   \\ \nonumber
      (Bx)(t)&=&  \int\limits_{\alpha_1}^{\beta_1}I_{[\alpha,\beta]}(t)\left(\theta_{B,1}\sin(\omega t)\cos(\omega s)+\theta_{B,2}\cos(\omega t)\cos(\omega s)\right. \\ \nonumber
     && \left. +\theta_{B,3}\sin(\omega t)\sin(\omega s)+\theta_{B,4}\cos(\omega t)\sin(\omega s)\right) x(s)d\mu_s,
  \end{eqnarray*}
 for almost every $t$, where $\theta_{A,i}, \theta_{B,i}, \omega \in\mathbb{R}$, $i=1,2,3,4$,  $\delta \in\mathbb{R}\setminus\{0\}$, $\alpha,\beta,\alpha_1,\beta_1 \in\mathbb{R}$, $\alpha_1<\beta_1$, $\alpha\le \alpha_1$, $\beta\ge \beta_1$, $I_E(\cdot)$ is the indicator function of the set $E$, the number $\frac{\omega }{\pi}(\beta_1-\alpha_1)\in \mathbb{Z}$ or $\frac{\omega }{\pi}(\beta_1+\alpha_1)\in \mathbb{Z}$, and
 $\sigma_1,\sigma_2\in\mathbb{R}$ are such that
 \begin{align*}
       & \sigma_1=\int\limits_{\alpha_1}^{\beta_1} (\sin(\omega s))^2 d\mu_s=\left\{\begin{array}{cc}
                         0, & \mbox{ if } \omega=0 \\
                         \frac{\beta_1-\alpha_1}{2}-\frac{\cos(\omega(\alpha_1+\beta_1))\sin(\omega(\beta_1-\alpha_1))}{2\omega}, & \mbox{ if } \omega\not=0,
                       \end{array} \right.\\ 
    &  \sigma_2=\int\limits_{\alpha_1}^{\beta_1} (\cos(\omega s))^2 d\mu_s=\beta_1-\alpha_1-\sigma_1\\
    &=\left\{\begin{array}{cc}
                         \beta_1-\alpha_1, & \mbox{ if } \omega=0 \\
                         \frac{\beta_1-\alpha_1}{2}+\frac{\cos(\omega(\alpha_1+\beta_1))\sin(\omega(\beta_1-\alpha_1))}{2\omega}, & \mbox{ if } \omega\not=0.
                       \end{array} \right.
        \end{align*}
   Then,  for almost every $t$,
\begin{align}\label{CommutatorABFourierSequenceKernelOmegaOrtog}
  & (AB-BA)x(t) =
   \\
&
 \resizebox{0.99\hsize}{!}{$\displaystyle
\begin{array}{c}
\int\limits_{\alpha_1}^{\beta_1}I_{[\alpha,\beta]}(t)\big((\theta_{A,3}\theta_{B,1}\sigma_1-\theta_{B,3}\theta_{A,1}\sigma_1
+\theta_{A,1}\theta_{B,2}\sigma_2-\theta_{B,1}\theta_{A,2}\sigma_2)\sin(\omega t)\cos(\omega s)  \\
+ (\theta_{B,1}\theta_{A,4} - \theta_{A,1}\theta_{B,4})\sigma_1\cos(\omega t)\cos(\omega s)
+ (\theta_{A,1}\theta_{B,4} - \theta_{B,1}\theta_{A,4})\sigma_2\sin(\omega t)\sin(\omega s) \\
+(\theta_{A,4}\theta_{B,3}\sigma_1-\theta_{B,4}\theta_{A,3}\sigma_1
   +\theta_{A,2}\theta_{B,4}\sigma_2-\theta_{B,2}\theta_{A,4}\sigma_2)\cos(\omega t)\sin(\omega s)\big)x(s)d\mu_s
\end{array} $}      \nonumber
  \end{align}
  Moreover, if $\sigma_1\not=0$, $\sigma_2\not=0$, then  $AB=BA$ if and only if
  \begin{align*}
&\theta_{A,3}\theta_{B,1}\sigma_1-\theta_{B,3}\theta_{A,1}\sigma_1=\theta_{B,1}\theta_{A,2}\sigma_2-\theta_{A,1}\theta_{B,2}\sigma_2, \quad
\theta_{A,4}\theta_{B,1}=\theta_{B,4}\theta_{A,1} \\
& \theta_{A,4}\theta_{B,3}\sigma_1-\theta_{B,4}\theta_{A,3}\sigma_1=\theta_{B,2}\theta_{A,4}\sigma_2-\theta_{A,2}\theta_{B,4}\sigma_2.
\end{align*}
\end{proposition}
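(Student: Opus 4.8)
The plan is to recognise that both $A$ and $B$ are instances of the separable-kernel integral operators treated in Theorem \ref{thmBothIntOPGenSeptedKernels} and Proposition \ref{PropSimilarthmBothIntOpGensepKernellsCommutativityCommutator}, with $l_A=l_B=2$ and $G_A=G_B=G=[\alpha_1,\beta_1]$, so that the commutator formula of Proposition \ref{PropSimilarthmBothIntOpGensepKernellsCommutativityCommutator} applies directly. Concretely, I would group each kernel by its $t$-dependence, setting $a_1(t)=I_{[\alpha,\beta]}(t)\sin(\omega t)$, $a_2(t)=I_{[\alpha,\beta]}(t)\cos(\omega t)$, $c_1(s)=\theta_{A,1}\cos(\omega s)+\theta_{A,3}\sin(\omega s)$, $c_2(s)=\theta_{A,2}\cos(\omega s)+\theta_{A,4}\sin(\omega s)$, and analogously $b_1,b_2,e_1,e_2$ with the $\theta_{B,\cdot}$ coefficients. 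Since $\alpha\le\alpha_1<\beta_1\le\beta$, the factor $I_{[\alpha,\beta]}$ equals $1$ on the range of integration, so it may be dropped inside every pairing $Q_G$.

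The first key step is the orthogonality relation. Writing $\sin(\omega s)\cos(\omega s)=\tfrac12\sin(2\omega s)$ one evaluates
\[
\int_{\alpha_1}^{\beta_1}\sin(\omega s)\cos(\omega s)\,d\mu_s=\frac{\sin(\omega(\beta_1+\alpha_1))\sin(\omega(\beta_1-\alpha_1))}{2\omega},
\]
which vanishes precisely under the hypothesis $\tfrac{\omega}{\pi}(\beta_1-\alpha_1)\in\mathbb{Z}$ or $\tfrac{\omega}{\pi}(\beta_1+\alpha_1)\in\mathbb{Z}$. Together with $\int_{\alpha_1}^{\beta_1}\sin^2(\omega s)\,d\mu_s=\sigma_1$ and $\int_{\alpha_1}^{\beta_1}\cos^2(\omega s)\,d\mu_s=\sigma_2$, this lets me compute all eight pairings; for instance $Q_G(b_1,c_1)=\theta_{A,3}\sigma_1$, $Q_G(b_2,c_1)=\theta_{A,1}\sigma_2$, $Q_G(e_1,a_1)=\theta_{B,3}\sigma_1$, $Q_G(e_1,a_2)=\theta_{B,1}\sigma_2$, the cross terms dropping out thanks to the orthogonality. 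Emphasising that this vanishing is exactly what produces the clean four-term kernel is important: without it each pairing would carry an extra $\int\sin\cos$ contribution.

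Substituting these values into the two double sums of Proposition \ref{PropSimilarthmBothIntOpGensepKernellsCommutativityCommutator} and subtracting, I would collect the resulting kernel according to the four products $\sin(\omega t)\cos(\omega s)$, $\cos(\omega t)\cos(\omega s)$, $\sin(\omega t)\sin(\omega s)$, $\cos(\omega t)\sin(\omega s)$; matching each coefficient yields exactly \eqref{CommutatorABFourierSequenceKernelOmegaOrtog}. For the commutativity statement, $AB=BA$ is equivalent to the vanishing of this kernel, hence by Lemma \ref{LemmaAllowInfSetsEqLp} (taken with $g=0$) to the vanishing of the bracketed combination for almost every $(t,s)\in[\alpha_1,\beta_1]^2$. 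To isolate the four coefficients I would integrate the bracket against each of the four products over $[\alpha_1,\beta_1]^2$; using the same three integrals, each integration returns a single coefficient multiplied by $\sigma_1\sigma_2$. Since $\sigma_1\neq0$ and $\sigma_2\neq0$ (which in particular forces $\omega\neq0$), every coefficient must vanish. The $\cos(\omega t)\cos(\omega s)$ and $\sin(\omega t)\sin(\omega s)$ coefficients both reduce to $\theta_{A,4}\theta_{B,1}=\theta_{B,4}\theta_{A,1}$, collapsing to the single middle condition, while the remaining two give the first and third displayed equalities.

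The routine-but-delicate part is the bookkeeping: keeping the four index families in the double sums straight and confirming that the cross pairings $Q_G(\sin,\cos)$ genuinely cancel. The only conceptual subtlety lies in the commutativity direction, where one must justify that vanishing of the operator forces vanishing of each product's coefficient; this is precisely where the nondegeneracy $\sigma_1\sigma_2\neq0$ (equivalently, linear independence of $\{\sin(\omega\cdot),\cos(\omega\cdot)\}$ on the interval) enters, and the test-function integration is the step that makes this extraction rigorous.
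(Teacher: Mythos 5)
Your proposal is correct and follows essentially the same route as the paper: identify $A$ and $B$ as separable-kernel operators on $G_A=G_B=[\alpha_1,\beta_1]$, apply the commutator formula of Proposition \ref{PropSimilarthmBothIntOpGensepKernellsCommutativityCommutator}, use the orthogonality $\int_{\alpha_1}^{\beta_1}\sin(\omega s)\cos(\omega s)\,d\mu_s=0$ guaranteed by the hypothesis on $\omega$ to evaluate the pairings $Q_G$, and invoke Lemma \ref{LemmaAllowInfSetsEqLp} together with $\sigma_1\sigma_2\neq0$ to extract the coefficient conditions for $AB=BA$. The only (immaterial) difference is that you group the kernel into two terms by $t$-dependence with the $\theta$'s absorbed into $c_i,e_i$, whereas the paper uses the four-term splitting $l_A=l_B=4$; the resulting computations and conclusions coincide.
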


\begin{proof}
  Operators $A$, $B$ are well defined
  and bounded. Equality \eqref{CommutatorABFourierSequenceKernelOmegaOrtog} follows from Proposition
  \ref{PropSimilarthmBothIntOpGensepKernellsCommutativityCommutator} when $a_i(t)=I_{[\alpha,\beta]}(t)\sin(\omega t)$, $i=1,3$,
   $a_i(t)=I_{[\alpha,\beta]}(t)\cos(\omega t)$,
   \\
   $i=2,4,$ $t\in\mathbb{R},$ $c_i(s)=\cos(\omega s),$
   $i=1,2,$
   $c_i(s)=\sin(\omega s)$,   $i=3,4$, $s\in [\alpha_1,\beta_1]$ and $\alpha_1,\beta_1,\omega$ satisfy
   $\int\limits_{\alpha_1}^{\beta_1} \sin(\omega s) \cos(\omega s)\,d\mu_s=0$.
   Then, by applying Lemma \ref{LemmaAllowInfSetsEqLp} to solve $AB-BA=0$, we complete the proof.
   \qed
\end{proof}

We use  Proposition \ref{PropositionCommutatorZeroFourSequenceCond} to establish commutativity of some operators in Case 2 of Example \ref{ExampleRepintopgensepkernelfouriersequence}.

\section{Examples}\label{SecRepreBothLIExample}

In this section, we  use the derived conditions in Section \ref{SecRepreBothLI} to construct examples of integral operators on $L_p$ spaces, with separable kernels  representing  covariance commutation relations associated to monomials, for  kernels involving multi-parameter trigonometric functions, polynomials and Laurent polynomials on bounded intervals. 

\begin{example}\label{ExampleRepintopgensepkernelfouriersequence}
{\rm
Let $(\mathbb{R}, \Sigma,\mu)$ be the standard Lebesgue measure space.  
Let operators $$A:L_p(\mathbb{R},\mu)\to L_p(\mathbb{R},\mu),\  B:L_p(\mathbb{R},\mu)\to L_p(\mathbb{R},\mu),\ 1< p<\infty$$
be defined as follows
\[
 \textstyle (Ax)(t)= \int\limits_{\alpha_1}^{\beta_1} \sum\limits_{m=1}^4 a_m(t)c_m(s)x(s)d\mu_s,\quad (Bx)(t)= \int\limits_{\alpha_1}^{\beta_1} \sum\limits_{k=1}^4 b_k(t)e_k(s)x(s)d\mu_s,
\]
for almost every $t$, where the index in  $\mu$ indicates the variable of integration,
\begin{align*}
&\resizebox{0.93\hsize}{!}{$\displaystyle \sum_{m=1}^4 a_m(t)c_m(s)= I_{[\alpha,\beta]}(t)\left(\theta_{A,1}\sin\left( \frac{2\pi m_1 t}{\lambda_t} \right) \cos\left( \frac{2\pi k_1 s}{\lambda_s}\right)+\theta_{A_2}\cos\left( \frac{2\pi m_2 t}{\lambda_t} \right)\right. $}
\\
&
\resizebox{0.95\hsize}{!}{$\displaystyle + \cos\left( \frac{2\pi k_2 s}{\lambda_s}\right)+
 \left.\theta_{A,3}\sin\left( \frac{2\pi m_3 t}{\lambda_t} \right) \sin\left( \frac{2\pi k_3 s}{\lambda_s}\right) + \theta_{A,4}\cos\left( \frac{2\pi m_4 t}{\lambda_t} \right) \sin\left( \frac{2\pi k_4 s}{\lambda_s}\right)\right), $}
 \\
&
\resizebox{0.93\hsize}{!}{$\displaystyle \sum_{k=1}^4 b_k(t)e_k(s)= I_{[\alpha,\beta]}(t)\left(\theta_{B,1}\sin\left( \frac{2\pi m_1 t}{\lambda_t} \right) \cos\left( \frac{2\pi k_1 s}{\lambda_s}\right)+\theta_{B,2}\cos\left( \frac{2\pi m_2 t}{\lambda_t} \right)\right.
$}
\\
  &
 \resizebox{0.95\hsize}{!}{$\displaystyle +\cos\left( \frac{2\pi k_2 s}{\lambda_s}\right)+
 \left.\theta_{B,3}\sin\left( \frac{2\pi m_3 t}{\lambda_t} \right) \sin\left( \frac{2\pi k_3 s}{\lambda_s}\right) + \theta_{B4}\cos\left( \frac{2\pi m_4 t}{\lambda_t} \right) \sin\left( \frac{2\pi k_4 s}{\lambda_s}\right)\right), $}
\end{align*}
for almost every $ (t,s)\in \mathbb{R}\times[\alpha_1,\beta_1]$, $\alpha,\, \alpha_1,\, \beta,\, \beta_1$ are real constants such that $\alpha_1<\beta_1$, $\alpha\le \alpha_1,$ $\beta\ge \beta_1$  and $I_{E}(t)$ is the indicator function of the set $E$, $\lambda_t,\lambda_s\in \mathbb{R}\setminus \{0\}$, $m_1,m_2,m_3,m_4, k_1,k_2,k_3,k_4 \in \mathbb{R}$. So we have
\begin{gather*}
\begin{array}{ll}
a_1(t)=\theta_{A,1}I_{[\alpha,\beta]}(t)\sin\left( \frac{2\pi m_1 t}{\lambda_t} \right), \ & a_2(t)=\theta_{A,2}I_{[\alpha,\beta]}(t)\cos\left(\frac{2\pi m_2 t}{\lambda_t} \right), \\ a_3(t)=\theta_{A,3}I_{[\alpha,\beta]}(t)\sin\left( \frac{2\pi m_3 t}{\lambda_t} \right), &
a_4(t)=\theta_{A,4}I_{[\alpha,\beta]}(t)\cos\left( \frac{2\pi m_4 t}{\lambda_t} \right),  \\
c_1(s)= \cos\left( \frac{2\pi k_1 s}{\lambda_s} \right) & c_2(s)=\cos\left( \frac{2\pi k_2 s}{\lambda_s} \right), \\
c_3(s)=\sin\left( \frac{2\pi k_3 s}{\lambda_s} \right) & c_4(s)=\sin\left( \frac{2\pi k_4 s}{\lambda_s} \right), \\
b_1(t)=\theta_{B,1}I_{[\alpha,\beta]}(t)\sin\left( \frac{2\pi m_1 t}{\lambda_t} \right), \ & b_2(t)=\theta_{B,2} I_{[\alpha,\beta]}(t) \cos\left( \frac{2\pi m_2 t}{\lambda_t} \right), \\ b_3(t)=\theta_{B,3}I_{[\alpha,\beta]}(t)\sin\left( \frac{2\pi m_3 t}{\lambda_t} \right), &
b_4(t)=\theta_{B,4}I_{[\alpha,\beta]}(t)\cos\left( \frac{2\pi m_4 t}{\lambda_t} \right),  \\
e_1(s)= \cos\left( \frac{2\pi k_1 s}{\lambda_s} \right) & e_2(s)=\cos\left( \frac{2\pi k_2 s}{\lambda_s} \right), \\
e_3(s)=\sin\left( \frac{2\pi k_3 s}{\lambda_s} \right) & e_4(s)=\sin\left( \frac{2\pi k_4 s}{\lambda_s} \right), \\
\end{array}
\end{gather*}

These operators are well defined, indeed  for any fixed $p$, $1<p<\infty$, functions $a_i,b_j\in L_p(\mathbb{R},\mu)$, $c_i,e_j\in L_q([\alpha_1,\beta_1],\mu)$, $i=1,2,3,4$, $j=1,2,3,4$, $1<q<\infty $, $\frac{1}{p}+\frac{1}{q}=1$. In fact,
\begin{equation*}
  \int\limits_{\mathbb{R}} |a_i (s)|^p d\mu=\int\limits_{\alpha}^\beta |\tilde{a}_i (s)|^p d\mu<\infty,\ i=1,2,3,4,
\end{equation*}
where $\tilde{a}_j(s)=\theta_{A,j}\sin\left( \frac{2\pi m_j s}{\lambda_s} \right)$, $j=1,3$, $\tilde{a}_j(s)=\theta_{A,j}\cos\left( \frac{2\pi m_j s}{\lambda_s} \right)$, $j=2,4$ are  continuous functions. Similarly,
 \begin{equation*}
  \int\limits_{\mathbb{R}} |b_j (s)|^p d\mu=\int\limits_{\alpha}^\beta |\tilde{b}_j (s)|^p d\mu<\infty,\ j=1,2,3,4.
\end{equation*}
where $\tilde{b}(s)=\cos\left( \frac{2\pi m_j s}{\lambda_s} \right)$, $j=1,2$, $\tilde{b}_j(s)=\sin\left( \frac{2\pi m_j t}{\lambda_t} \right)$, $j=3,4$ are continuous functions. Analogously, we have
 \begin{equation*}
  \int\limits_{\alpha_1}^{\beta_1} |{c}_i (s)|^q d\mu<\infty,\ i=1,2,3,4,\quad  \int\limits_{\alpha_1}^{\beta_1} |{e}_j (s)|^q d\mu<\infty,\ j=1,2,3,4.
\end{equation*}
Note that in this case conditions \ref{thmBothIntOPGenSeptedKernels:cond1}, \ref{thmBothIntOPGenSeptedKernels:cond2} and \ref{thmBothIntOPGenSeptedKernels:cond3} of Corollary \ref{corDiedroRelBothIntOPGenSeptedKernels} reduces just to condition \ref{thmBothIntOPGenSeptedKernels:cond1} because the sets $G_A=G_B=[\alpha_1,\beta_1]$, and so $G=[\alpha_1,\beta_1]$, $G_A\setminus G=G_B\setminus G=\emptyset$. Therefore, according to Remark \ref{RemOpDefInSameIntervalGenSepKernels}, conditions \ref{thmBothIntOPGenSeptedKernels:cond2} and \ref{thmBothIntOPGenSeptedKernels:cond3} are taken on a set of measure zero, they are fulfilled.

Consider the polynomial $F(z)=\delta z^2$, $ \delta\in\mathbb{R}$. We will seek for conditions such that operators satisfy $AB=BF(A)$. By applying Corollary \ref{corDiedroRelBothIntOPGenSeptedKernels} when $n=2$ we have
\begin{equation}\label{CondCRWaveletExample4TermsFourierSerie}
 \sum_{k=1}^4 \sum_{m=1}^4 a_m(t)Q_{G_A}(b_k,c_m)e_k(s) =  \sum_{i=1}^4 \sum_{i_1,i_2=1}^4 [\delta b_i(t)a_m(t)Q_{G_B}(e_i,a_{i_1})\gamma_{i_1,i_2}c_{i_2}(s)
 \end{equation}
   for almost every $(t,s)\in \mathbb{R}\times [\alpha_1,\beta_1]$.
  So, we have
\begin{align*}
& a_i(t)e_j(s)=\left\{\begin{array}{ll}
  \theta_{A,i}I_{[\alpha,\beta]}(t)\sin\left(\frac{2\pi m_i t}{\lambda_t}\right)\cos\left(\frac{2\pi k_j s}{\lambda_s}\right), & i=1,3,\ j=1,2\\
  \theta_{A,i}I_{[\alpha,\beta]}(t)\sin\left(\frac{2\pi m_i t}{\lambda_t}\right)\sin\left(\frac{2\pi k_j s}{\lambda_s}\right), & i=1,3,\ j=3,4\\
   \theta_{A,i}I_{[\alpha,\beta]}(t)\cos\left(\frac{2\pi m_i t}{\lambda_t}\right)\cos\left(\frac{2\pi k_j s}{\lambda_s}\right), & i=2,4,\ j=1,2\\
  \theta_{A,i}I_{[\alpha,\beta]}(t)\cos\left(\frac{2\pi m_i t}{\lambda_t}\right)\sin\left(\frac{2\pi k_j s}{\lambda_s}\right), & i=2,4,\ j=3,4,
              \end{array}\right.  \\
& b_i(t)c_j(s)=\left\{\begin{array}{ll}
  \theta_{B,i}I_{[\alpha,\beta]}(t)\sin\left(\frac{2\pi m_i t}{\lambda_t}\right)\cos\left(\frac{2\pi k_j s}{\lambda_s}\right), & i=1,3,\ j=1,2\\
  \theta_{B,i}I_{[\alpha,\beta]}(t)\sin\left(\frac{2\pi m_i t}{\lambda_t}\right)\sin\left(\frac{2\pi k_j s}{\lambda_s}\right), & i=1,3,\ j=3,4\\
   \theta_{B,i}I_{[\alpha,\beta]}(t)\cos\left(\frac{2\pi m_i t}{\lambda_t}\right)\cos\left(\frac{2\pi k_j s}{\lambda_s}\right), & i=2,4,\ j=1,2\\
  \theta_{B,i}I_{[\alpha,\beta]}(t)\cos\left(\frac{2\pi m_i t}{\lambda_t}\right)\sin\left(\frac{2\pi k_j s}{\lambda_s}\right), & i=2,4,\ j=3,4.
              \end{array}\right.
   \end{align*}
There are many cases but we will consider two cases.
\subsection*{Case 1:}
If $(\frac{m_i}{\lambda_t},\frac{k_j}{\lambda_s})\not=(\frac{m_u}{\lambda_t},\frac{k_w}{\lambda_s})$, whenever $((i,j)\not=(u,w))$, $i,j,u,w=1,2,3,4$ and by putting  $a_j(t)=\theta_{A,j}\tilde{a}_j(t)$, $b_j(t)=\theta_{B,j}\tilde{b}_j(t)$ we get from  \eqref{CondCRWaveletExample4TermsFourierSerie} the following
\begin{equation*}
  a_i(t)e_j(t)Q_{G_A}(b_j,c_i)=\delta b_i(t)c_j(s)\sum_{k=1}^{4}Q_{G_A}(e_i,a_k)\gamma_{k,j},\ i,j=1,2,3,4,
\end{equation*}
for almost every $(t,s)\in\mathbb{R}\times [\alpha_1,\beta_1]$, where $\gamma_{k,i}=Q_{G_A}(a_i,c_k)$. Hence, we have
\begin{equation*}
 Q_{G_A}(\tilde{b}_j,c_i)\theta_{A,i}\theta_{B,j}=\delta \theta_{B,i}\theta_{A,j} \sum_{k=1}^4 Q_{G_B}(e_i,a_k)Q_{G_A}(\tilde{a}_j,c_k), \ i,j=1,2,3,4.
\end{equation*}
 By using bilinearity and symmetry of $Q_{\Lambda}(\cdot,\cdot)$ we can expand the previous equations into the following system of equations:
\begin{eqnarray*}
&& \hspace{-1cm} Q_{G_A}(\tilde b_1,c_1)\theta_{A,1}\theta_{B,1}=\delta \theta_{B,1}\theta_{A,1}\left( \theta_{A,1}Q_{G_B}(\tilde{a}_1,e_1)Q_{G_A}(\tilde{a}_1,c_1)\right.\\
 &&+\left.\theta_{A,2}Q_{G_B}(\tilde{a}_2,e_1)Q_{G_A}(\tilde{a}_1,c_2)+\theta_{A,3}Q_{G_B}(\tilde{a}_3,e_1)Q_{G_A}(\tilde{a}_1,c_3)\right.\\
 & &+\left.\theta_{A,4}Q_{G_B}(\tilde{a}_4,e_1)Q_{G_A}(\tilde{a}_1,c_4)\right),
 \\
&& \hspace{-1cm}  Q_{G_A}(\tilde b_2,c_1)\theta_{A,2}\theta_{B,1}=\delta \theta_{B,1}\theta_{A,2}\left( \theta_{A,1}Q_{G_B}(\tilde{a}_1,e_1)Q_{G_A}(\tilde{a}_2,c_1)\right.\\
 &&+\left.\theta_{A,2}Q_{G_B}(\tilde{a}_2,e_1)Q_{G_A}(\tilde{a}_2,c_2)+\theta_{A,3}Q_{G_B}(\tilde{a}_3,e_1)Q_{G_A}(\tilde{a}_2,c_3)\right.\\
 & &+\left.\theta_{A,4}Q_{G_B}(\tilde{a}_4,e_1)Q_{G_A}(\tilde{a}_2,c_4)\right),
\\
&& \hspace{-1cm} Q_{G_A}(\tilde b_3,c_1)\theta_{A,1}\theta_{B,3}=\delta \theta_{B,1}\theta_{A,3}\left( \theta_{A,1}Q_{G_B}(\tilde{a}_1,e_1)Q_{G_A}(\tilde{a}_3,c_1)\right.\\
 & &+\left.\theta_{A,2}Q_{G_B}(\tilde{a}_2,e_1)Q_{G_A}(\tilde{a}_3,c_2)+\theta_{A,3}Q_{G_B}(\tilde{a}_3,e_1)Q_{G_A}(\tilde{a}_3,c_3)\right.\\
 & &+\left.\theta_{A,4}Q_{G_B}(\tilde{a}_4,e_1)Q_{G_A}(\tilde{a}_3,c_4)\right),
\\
&& \hspace{-1cm} Q_{G_A}(\tilde b_4,c_1)\theta_{A,1}\theta_{B,4}=\delta \theta_{B,1}\theta_{A,4}\left( \theta_{A,1}Q_{G_B}(\tilde{a}_1,e_1)Q_{G_A}(\tilde{a}_4,c_1)\right.\\
 & &+\left.\theta_{A,2}Q_{G_B}(\tilde{a}_2,e_1)Q_{G_A}(\tilde{a}_4,c_2)+\theta_{A,3}Q_{G_B}(\tilde{a}_3,e_1)Q_{G_A}(\tilde{a}_4,c_3)\right.\\
 & &+\left.\theta_{A,4}Q_{G_B}(\tilde{a}_4,e_1)Q_{G_A}(\tilde{a}_4,c_4)\right),
\\
&& \hspace{-1cm} Q_{G_A}(\tilde b_1,c_2)\theta_{A,2}\theta_{B,1}=\delta \theta_{B,2}\theta_{A,1}\left( \theta_{A,1}Q_{G_B}(\tilde{a}_1,e_2)Q_{G_A}(\tilde{a}_1,c_1)\right.\\
 & &+\left.\theta_{A,2}Q_{G_B}(\tilde{a}_2,e_2)Q_{G_A}(\tilde{a}_1,c_2)+\theta_{A,3}Q_{G_B}(\tilde{a}_3,e_2)Q_{G_A}(\tilde{a}_1,c_3)\right.\\
 & &+\left.\theta_{A,4}Q_{G_B}(\tilde{a}_4,e_2)Q_{G_A}(\tilde{a}_1,c_4)\right),
 \\
&& \hspace{-1cm} Q_{G_A}(\tilde b_2,c_2)\theta_{A,2}\theta_{B,2}=\delta \theta_{B,2}\theta_{A,2}\left( \theta_{A,1}Q_{G_B}(\tilde{a}_1,e_2)Q_{G_A}(\tilde{a}_2,c_1)\right.\\
 & &+\left.\theta_{A,2}Q_{G_B}(\tilde{a}_2,e_2)Q_{G_A}(\tilde{a}_2,c_2)+\theta_{A,3}Q_{G_B}(\tilde{a}_3,e_2)Q_{G_A}(\tilde{a}_2,c_3)\right.\\
 & &+\left.\theta_{A,4}Q_{G_B}(\tilde{a}_4,e_2)Q_{G_A}(\tilde{a}_2,c_4)\right),
 \\
&& \hspace{-1cm} Q_{G_A}(\tilde b_3,c_2)\theta_{A,2}\theta_{B,3}=\delta \theta_{B,2}\theta_{A,3}\left( \theta_{A,1}Q_{G_B}(\tilde{a}_1,e_2)Q_{G_A}(\tilde{a}_3,c_1)\right.\\
 & &+\left.\theta_{A,2}Q_{G_B}(\tilde{a}_2,e_2)Q_{G_A}(\tilde{a}_3,c_2)+\theta_{A,3}Q_{G_B}(\tilde{a}_3,e_2)Q_{G_A}(\tilde{a}_3,c_3)\right.\\
 & &+\left.\theta_{A,4}Q_{G_B}(\tilde{a}_4,e_2)Q_{G_A}(\tilde{a}_3,c_4)\right),
 \\
&& \hspace{-1cm} Q_{G_A}(\tilde b_4,c_2)\theta_{A,2}\theta_{B,4}=\delta \theta_{B,2}\theta_{A,4}\left( \theta_{A,1}Q_{G_B}(\tilde{a}_1,e_2)Q_{G_A}(\tilde{a}_2,c_1)\right.\\
 & &+\left.\theta_{A,2}Q_{G_B}(\tilde{a}_2,e_2)Q_{G_A}(\tilde{a}_4,c_2)+\theta_{A,3}Q_{G_B}(\tilde{a}_3,e_2)Q_{G_A}(\tilde{a}_4,c_3)\right.\\
 & &+\left.\theta_{A,4}Q_{G_B}(\tilde{a}_4,e_2)Q_{G_A}(\tilde{a}_4,c_4)\right),
 \\
&& \hspace{-1cm} Q_{G_A}(\tilde b_1,c_3)\theta_{A,3}\theta_{B,1}=\delta \theta_{B,3}\theta_{A,1}\left( \theta_{A,1}Q_{G_B}(\tilde{a}_1,e_3)Q_{G_A}(\tilde{a}_1,c_1)\right.\\
 & &+\left.\theta_{A,2}Q_{G_B}(\tilde{a}_2,e_3)Q_{G_A}(\tilde{a}_1,c_2)+\theta_{A,3}Q_{G_B}(\tilde{a}_3,e_3)Q_{G_A}(\tilde{a}_1,c_3)\right.\\
 & &+\left.\theta_{A,4}Q_{G_B}(\tilde{a}_4,e_3)Q_{G_A}(\tilde{a}_1,c_4)\right),
 \\
 && \hspace{-1cm} Q_{G_A}(\tilde b_2,c_3)\theta_{A,2}\theta_{B,3}=\delta \theta_{B,2}\theta_{A,3}\left( \theta_{A,1}Q_{G_B}(\tilde{a}_1,e_3)Q_{G_A}(\tilde{a}_2,c_1)\right.\\
 & &+\left.\theta_{A,2}Q_{G_B}(\tilde{a}_2,e_3)Q_{G_A}(\tilde{a}_2,c_2)+\theta_{A,3}Q_{G_B}(\tilde{a}_3,e_3)Q_{G_A}(\tilde{a}_2,c_3)\right.\\
 & &+\left.\theta_{A,4}Q_{G_B}(\tilde{a}_4,e_3)Q_{G_A}(\tilde{a}_2,c_4)\right),
 \\
 && \hspace{-1cm} Q_{G_A}(\tilde b_3,c_3)\theta_{A,3}\theta_{B,3}=\delta \theta_{B,3}\theta_{A,3}\left( \theta_{A,1}Q_{G_B}(\tilde{a}_1,e_3)Q_{G_A}(\tilde{a}_3,c_1)\right.\\
 & &+\left.\theta_{A,2}Q_{G_B}(\tilde{a}_2,e_3)Q_{G_A}(\tilde{a}_3,c_2)+\theta_{A,3}Q_{G_B}(\tilde{a}_3,e_3)Q_{G_A}(\tilde{a}_3,c_3)\right.\\
 & &+\left.\theta_{A,4}Q_{G_B}(\tilde{a}_4,e_3)Q_{G_A}(\tilde{a}_3,c_4)\right),
 \\
 && \hspace{-1cm} Q_{G_A}(\tilde b_4,c_3)\theta_{A,3}\theta_{B,4}=\delta \theta_{B,3}\theta_{A,4}\left( \theta_{A,1}Q_{G_B}(\tilde{a}_1,e_3)Q_{G_A}(\tilde{a}_4,c_1)\right.\\
 & &+\left.\theta_{A,2}Q_{G_B}(\tilde{a}_2,e_3)Q_{G_A}(\tilde{a}_4,c_2)+\theta_{A,3}Q_{G_B}(\tilde{a}_3,e_3)Q_{G_A}(a_4,c_3)\right.\\
 & &+\left.\theta_{A,4}Q_{G_B}(\tilde{a}_4,e_3)Q_{G_A}(\tilde{a}_4,c_4)\right),
 \\
 && \hspace{-1cm} Q_{G_A}(\tilde b_1,c_4)\theta_{A,4}\theta_{B,1}=\delta \theta_{B,4}\theta_{A,1}\left( \theta_{A,1}Q_{G_B}(\tilde{a}_1,e_4)Q_{G_A}(\tilde{a}_1,c_1)\right.\\
 & &+\left.\theta_{A,2}Q_{G_B}(\tilde{a}_2,e_4)Q_{G_A}(\tilde{a}_1,c_2)+\theta_{A,3}Q_{G_B}(\tilde{a}_3,e_4)Q_{G_A}(a_1,c_3)\right.\\
 & &+\left.\theta_{A,4}Q_{G_B}(\tilde{a}_4,e_4)Q_{G_A}(\tilde{a}_1,c_4)\right),
 \\
 && \hspace{-1cm} Q_{G_A}(\tilde b_2,c_4)\theta_{A,4}\theta_{B,2}=\delta \theta_{B,4}\theta_{A,2}\left( \theta_{A,1}Q_{G_B}(\tilde{a}_1,e_4)Q_{G_A}(\tilde{a}_2,c_1)\right.\\
 & &+\left.\theta_{A,2}Q_{G_B}(\tilde{a}_2,e_4)Q_{G_A}(\tilde{a}_2,c_2)+\theta_{A,3}Q_{G_B}(\tilde{a}_3,e_4)Q_{G_A}(a_2,c_3)\right.\\
 & &+\left.\theta_{A,4}Q_{G_B}(\tilde{a}_4,e_4)Q_{G_A}(\tilde{a}_2,c_4)\right),
\\
 && \hspace{-1cm} Q_{G_A}(\tilde b_3,c_4)\theta_{A,4}\theta_{B,3}=\delta \theta_{B,4}\theta_{A,3}\left( \theta_{A,1}Q_{G_B}(\tilde{a}_1,e_4)Q_{G_A}(\tilde{a}_3,c_1)\right.\\
 & &+\left.\theta_{A,2}Q_{G_B}(\tilde{a}_2,e_4)Q_{G_A}(\tilde{a}_3,c_2)+\theta_{A,3}Q_{G_B}(\tilde{a}_3,e_4)Q_{G_A}(\tilde{a}_3,c_3)\right.\\
 & &+\left.\theta_{A,4}Q_{G_B}(\tilde{a}_4,e_4)Q_{G_A}(\tilde{a}_3,c_4)\right),
 \\
 && \hspace{-1cm}  Q_{G_A}(\tilde b_4,c_4)\theta_{A,4}\theta_{B,4}=\delta \theta_{B,4}\theta_{A,4}\left( \theta_{A,1}Q_{G_B}(\tilde{a}_1,e_4)Q_{G_A}(\tilde{a}_4,c_1)\right.\\
 & &+\left.\theta_{A,2}Q_{G_B}(\tilde{a}_2,e_4)Q_{G_A}(\tilde{a}_4,c_2)+\theta_{A,3}Q_{G_B}(\tilde{a}_3,e_4)Q_{G_A}(\tilde{a}_4,c_3)\right.\\
 & &+\left.\theta_{A,4}Q_{G_B}(\tilde{a}_4,e_4)Q_{G_A}(\tilde{a}_4,c_4)\right),
\end{eqnarray*}
We compute $Q_{G_A}(\tilde{b}_i,c_j)$, $Q_{G_A}(\tilde{a}_i,e_j)$,  $Q_{G_B}(e_j,\tilde{a}_i)$ and $Q_{G_A}(\tilde{a}_i, c_j)$:
   \begin{align*}
  & 
  Q_{G_A}(\tilde{b}_i,c_j)  = \int\limits_{\alpha_1}^{\beta_1} \tilde{b}_i(s)c_jd\mu_s\\
  & 
  =\frac{\lambda_t\lambda_s}{2\pi(m_i \lambda_s+k_j\lambda_t)}\sin\pi\left(\frac{m_i}{\lambda_t}+\frac{k_j}{\lambda_s}\right)(\beta_1+\alpha_1)\sin \pi\left(\frac{m_i}{\lambda_t}+\frac{k_j}{\lambda_s}\right)(\beta_1-\alpha_1)
\\
      & 
     +\frac{\lambda_t\lambda_s}{2\pi(m_i \lambda_s-k_j\lambda_t)}\sin\pi\left(\frac{m_i}{\lambda_t}-\frac{k_j}{\lambda_s}\right)(\beta_1+\alpha_1)\sin \pi\left(\frac{m_i}{\lambda_t}-\frac{k_j}{\lambda_s}\right)(\beta_1-\alpha_1),\\
      &   \mbox{ if } \frac{m_i}{\lambda_t}\not=\frac{k_j}{\lambda_s},\, \frac{m_i}{\lambda_t}\not=-\frac{k_j}{\lambda_s},\, i=1,3, j=1,2,
    \\
   & 
   Q_{G_A}(\tilde{b}_i,c_j)  = \frac{\lambda_t}{2\pi m_i}
      \left(\left(\sin\left(\frac{2\pi m_i}{\lambda_t}\beta_1\right)\right)^2-\left(\sin\left(\frac{2\pi m_i}{\lambda_t}\alpha_1\right)\right)^2\right),\\
      &  \ \mbox{ if }\ \frac{m_i}{\lambda_t}=\frac{k_j}{\lambda_s},\ i=1,3, j=1,2,
 \\
    & 
   \resizebox{0.99\hsize}{!}{$\displaystyle Q_{G_A}(\tilde{b}_i,c_j)= \frac{\lambda_t\lambda_s}{2\pi(m_i\lambda_s-k_j\lambda_t)}\sin\pi\left(\frac{ m_i}{\lambda_t}-\frac{k_j}{\lambda_s}\right)(\beta_1-\alpha_1)\cos\pi\left(\frac{ m_i}{\lambda_t}-\frac{k_j}{\lambda_s}\right)(\beta_1+\alpha_1)$}\\
   & 
   -\frac{\lambda_t\lambda_s}{2\pi(m_i\lambda_s+k_j\lambda_t)}\sin\pi\left(\frac{ m_i}{\lambda_t}+\frac{k_j}{\lambda_s}\right)(\beta_1-\alpha_1)\cos\pi\left(\frac{ m_i}{\lambda_t}+\frac{k_j}{\lambda_s}\right)(\beta_1+\alpha_1),
   \\
     &  \ \mbox{ if }\ \frac{m_i}{\lambda_t}\not=\frac{k_j}{\lambda_s},\, \frac{m_i}{\lambda_t}\not=-\frac{k_j}{\lambda_s},\, i=1,3,\, j=3,4,\,\\
  &
  Q_{G_A}(\tilde{b}_i,c_j)= \frac{1}{2}(\beta_1-\alpha_1)-\frac{\lambda_t}{4\pi m_i}\sin2\pi\frac{m_i}{\lambda_t}(\beta_1-\alpha_1)\cos2\pi\frac{m_i}{\lambda_t}(\beta_1+\alpha_1), \\
    & \frac{m_i}{\lambda_t}=\frac{k_j}{\lambda_s},\ i=1,3, j=3,4.\\
  & 
   \resizebox{0.99\hsize}{!}{$\displaystyle
   Q_{G_A}(\tilde{b}_i,c_j)= \frac{\lambda_t\lambda_s}{2\pi(m_i\lambda_s+k_j\lambda_t)}\sin\pi\left(\frac{ m_i}{\lambda_t}+\frac{k_j}{\lambda_s}\right)(\beta_1-\alpha_1)\cos\pi\left(\frac{ m_i}{\lambda_t}+\frac{k_j}{\lambda_s}\right)(\beta_1+\alpha_1)$}\\
   & 
   +\frac{\lambda_t\lambda_s}{2\pi(m_i\lambda_s-k_j\lambda_t)}\sin\pi\left(\frac{ m_i}{\lambda_t}-\frac{k_j}{\lambda_s}\right)(\beta_1-\alpha_1)\cos\pi\left(\frac{ m_i}{\lambda_t}-\frac{k_j}{\lambda_s}\right)(\beta_1+\alpha_1), \\
   &  \ \mbox{ if }\ \frac{m_i}{\lambda_t}\not=\frac{k_j}{\lambda_s},\, \frac{m_i}{\lambda_t}\not=-\frac{k_j}{\lambda_s},\, i=2,4,\, j=1,2,\,\\
  & 
  Q_{G_A}(\tilde{b}_i,c_j)= \frac{1}{2}(\beta_1-\alpha_1)+\frac{\lambda_t}{4\pi m_i}\sin2\pi\frac{m_i}{\lambda_t}(\beta_1-\alpha_1)\cos2\pi\frac{m_i}{\lambda_t}(\beta_1+\alpha_1), \\
   &
    \ \mbox{ if }\ \frac{m_i}{\lambda_t}=\frac{k_j}{\lambda_s},\ i=2,4, j=1,2.
 \\
  & 
  \resizebox{0.99\hsize}{!}{$\displaystyle Q_{G_A}(\tilde{b}_i,c_j)= \frac{\lambda_t\lambda_s}{2\pi(m_i\lambda_s+k_j\lambda_t)}\sin\pi\left(\frac{ m_i}{\lambda_t}+\frac{k_j}{\lambda_s}\right)(\beta_1+\alpha_1)\sin\pi\left(\frac{ m_i}{\lambda_t}+\frac{k_j}{\lambda_s}\right)(\beta_1-\alpha_1)$}\\
    &
    \ \cdot\frac{\lambda_t\lambda_s}{2\pi(m_i\lambda_s-k_j\lambda_t)}\sin\pi\left(\frac{ m_i}{\lambda_t}-\frac{k_j}{\lambda_s}\right)(\beta_1+\alpha_1)\sin\pi\left(\frac{ m_i}{\lambda_t}-\frac{k_j}{\lambda_s}\right)(\beta_1-\alpha_1),
    \\
   &
   \ \mbox{ if }\ \frac{m_i}{\lambda_t}\not=\frac{k_j}{\lambda_s},\, \frac{m_i}{\lambda_t}\not=-\frac{k_j}{\lambda_s},\, i=2,4,\, j=3,4,\,
    \\
   & Q_{G_A}(\tilde{b}_i,c_j)  = \frac{\lambda_t}{2\pi m_i}
      \left(\left(\sin\left(\frac{2\pi m_i}{\lambda_t}\beta_1\right)\right)^2-
      \left(\sin\left(\frac{2\pi m_i}{\lambda_t}\alpha_1\right)\right)^2\right),
      \\
   &
   \ \mbox{ if }\   \frac{m_i}{\lambda_t}=\frac{k_j}{\lambda_s},\ i=2,4, j=3,4.
   \end{align*}

   If  $\frac{m_i}{\lambda_t}\not=\frac{k_j}{\lambda_s}$ for all $i,j=1,2,3,4$ except when $i=j=2$, $i=j=3$ and the  numbers $\left(\frac{m_i}{\lambda_t}-\frac{k_j}{\lambda_s}\right)(\beta_1-\alpha_1)$ and
    $\left(\frac{m_i}{\lambda_t}+\frac{k_j}{\lambda_s}\right)(\beta_1-\alpha_1)$,
   $i,j=1,2,3,4$
    are integers, then the system
   \begin{equation*}
   \left\{\sin\left(\frac{2\pi m_i}{\lambda_t}t\right), \cos\left(\frac{2\pi k_j}{\lambda_s}t\right), \cos\left(\frac{2\pi m_i}{\lambda_t}t\right),\sin\left(\frac{2\pi k_j}{\lambda_s}t\right) \right\}_{i,j=1,2,3,4}
   \end{equation*}
   is orthogonal, that is, $Q_{G_A}(\tilde a_i,c_j)=0$ if $\tilde{a}_i\not=c_j$.
   In this case, if $G_A=G_B=[\alpha_1,\beta_1]$,  $\frac{m_2}{\lambda_t}=\frac{k_2}{\lambda_s}$ and $\frac{m_3}{\lambda_t}=\frac{k_3}{\lambda_s}$ then relation \eqref{CondCRWaveletExample4TermsFourierSerie}
   reduces to
      \begin{gather}\label{SystemEqsWaveletExample4TermsFourierSerieArgDiff}
       \left\{\begin{array}{cc}
        Q_{G_A}(\tilde{b}_2,c_2)\theta_{A,2}\theta_{B,2} &= \delta\theta_{B,2}\theta_{A,2}^2Q_{G_A}(\tilde{b}_2,c_2)Q_{G_B}(\tilde{a}_2,e_2) \\
        Q_{G_A}(\tilde{b}_3,c_3)\theta_{A,3}\theta_{B,3} &= \delta\theta_{B,3}\theta_{A,3}^2Q_{G_A}(\tilde{b}_3,c_3)Q_{G_B}(\tilde{a}_3,e_3).
        \end{array}\right.
      \end{gather}
     Since $G_A=G_B=[\alpha_1,\beta_1]$, $\tilde{a}_i=\tilde{b}_i$, $c_j=e_j$, $i,j=2,3$, we put \\
     $\sigma_1=Q_{G_A}(\sin (\omega_3 s), \sin(\omega_3 s) )$, $\sigma_2=Q_{G_A}(\cos (\omega_2 s), \cos(\omega_2 s) )$, where $\omega_2=\frac{2\pi m_2}{\lambda_t}$, $\omega_3=\frac{2\pi m_3}{\lambda_t}$. Therefore, if $\sigma_1\not=0,$ $\sigma_2\not=0,$ then the system of equations \eqref{SystemEqsWaveletExample4TermsFourierSerieArgDiff} has the following solutions:
     \begin{enumerate}
       \item\label{SolSystemEqsWaveletExample4TermsFourierSerieArgDiff:item1} $\left\{ \theta_{A,2}=0, \theta_{A,3}=0,\, \theta_{A,1}, \theta_{A,4}, \theta_{B,1},\theta_{B,2},\theta_{B,3},\theta_{B,4}\in\mathbb{R} \right\}$;
       \item\label{SolSystemEqsWaveletExample4TermsFourierSerieArgDiff:item2} $\left\{ \theta_{A,2}=0, \theta_{B,3}=0, \theta_{A,1},  \theta_{A,3}, \theta_{A,4}, \theta_{B,1},\theta_{B,2},\theta_{B,4}\in\mathbb{R} \right\}$;
       \item\label{SolSystemEqsWaveletExample4TermsFourierSerieArgDiff:item3} $\left\{ \theta_{A,2}=0, \theta_{A,3}=\frac{1}{\delta \sigma_1}, \delta\not=0, \theta_{A,1}, \theta_{A,4}, \theta_{B,1},\theta_{B,2},\theta_{B,3},\theta_{B,4}\in\mathbb{R} \right\}$;
        \item\label{SolSystemEqsWaveletExample4TermsFourierSerieArgDiff:item4} $\left\{ \theta_{B,2}=0, \theta_{A,3}=0, \theta_{A,1}, \theta_{A,2}, \theta_{A,4}, \theta_{B,1},\theta_{B,3},\theta_{B,4}\in\mathbb{R} \right\}$;
        \item\label{SolSystemEqsWaveletExample4TermsFourierSerieArgDiff:item5} $\left\{ \theta_{B,2}=0, \theta_{B,3}=0,\, \theta_{A,1}, \theta_{A,2}, \theta_{A,3}, \theta_{A,4}, \theta_{B,1},\theta_{B,4}\in\mathbb{R} \right\}$;
        \item\label{SolSystemEqsWaveletExample4TermsFourierSerieArgDiff:item6} $\left\{ \theta_{B,2}=0, \theta_{A,3}=\frac{1}{\delta \sigma_1},\delta\not=0,  \theta_{A,1}, \theta_{A,2}, \theta_{A,4}, \theta_{B,1},\theta_{B,3},\theta_{B,4}\in\mathbb{R} \right\}$;
         \item\label{SolSystemEqsWaveletExample4TermsFourierSerieArgDiff:item7} $\left\{ \theta_{A,2}=\frac{1}{\delta \sigma_2}, \theta_{A,3}=0, \delta\not=0,\, \theta_{A,1}, \theta_{A,4}, \theta_{B,1}, \theta_{B,2}, \theta_{B,3},\theta_{B,4}\in\mathbb{R} \right\}$;
          \item\label{SolSystemEqsWaveletExample4TermsFourierSerieArgDiff:item8} $\left\{\theta_{A,2}=\frac{1}{\delta \sigma_2}, \theta_{B,3}=0, \delta\not=0, \theta_{A,1}, \theta_{A,3}, \theta_{A,4}, \theta_{B,1},\theta_{B,2},\theta_{B,4}\in\mathbb{R} \right\}$;
          \item\label{SolSystemEqsWaveletExample4TermsFourierSerieArgDiff:item9} $\left\{ \theta_{A,2}=\frac{1}{\delta \sigma_2}, \theta_{A,3}=\frac{1}{\delta \sigma_1}, \delta\not=0, \theta_{A,1},  \theta_{A,4}, \theta_{B,1}, \theta_{B,2},\theta_{B,3},\theta_{B,4}\in\mathbb{R} \right\}$.
     \end{enumerate}

     The corresponding pairs of operators $(A,B)$ are \\
     \noindent\ref{SolSystemEqsWaveletExample4TermsFourierSerieArgDiff:item1}.
     For $\left\{ \theta_{A,2}=0, \theta_{A,3}=0, \theta_{A,1}, \theta_{A,4}, \theta_{B,1},\theta_{B,2},\theta_{B,3},\theta_{B,4}\in\mathbb{R},  \sigma_1\not=0,\sigma_2\not=0 \right\}$,
           \begin{align*}
           (Ax)(t)  =& \int\limits_{\alpha_1}^{\beta_1} I_{[\alpha,\beta]}(t)\left(\theta_{A,1} \sin(\frac{2\pi m_1}{\lambda_t} t)\cos(\frac{2\pi k_1}{\lambda_s} s)\right. \\
             &
            \left. +
            \theta_{A,4} \cos(\frac{2\pi m_4}{\lambda_t} t)\sin(\frac{2\pi k_4}{\lambda_s} s)\right) x(s)d\mu_s,\\
            (Bx)(t)=&\int\limits_{\alpha_1}^{\beta_1}I_{[\alpha,\beta]}(t) \left(\theta_{B,1} \sin(\frac{2\pi m_1}{\lambda_t} t)\cos(\frac{2\pi k_1}{\lambda_s} s)\right.
            \\
             + &
              \theta_{B,2} \cos(\frac{2\pi m_2}{\lambda_t} t)\cos(\frac{2\pi k_2}{\lambda_s} s)
             + \theta_{B,3} \sin(\frac{2\pi m_3}{\lambda_t} t)\sin(\frac{2\pi k_3}{\lambda_s} s)\\
            +& \left.
            \theta_{B,4} \cos(\frac{2\pi m_4}{\lambda_t} t)\sin(\frac{2\pi k_4}{\lambda_s} s)\right)x(s)d\mu_s,
           \end{align*}
          for almost every $t$. These operators satisfy $AB=\delta BA^2=BA=0$.

        \noindent\ref{SolSystemEqsWaveletExample4TermsFourierSerieArgDiff:item2}. For $\left\{ \theta_{A,2}=0, \theta_{B,3}=0, \theta_{A,1},  \theta_{A,3}, \theta_{A,4}, \theta_{B,1},\theta_{B,2},\theta_{B,4}\in\mathbb{R},  \sigma_1\not=0,\sigma_2\not=0 \right\}$,
       \begin{align*}
          & (Ax)(t)  = \int\limits_{\alpha_1}^{\beta_1} I_{[\alpha,\beta]}(t)\left(\theta_{A,1} \sin(\frac{2\pi m_1}{\lambda_t} t)\cos(\frac{2\pi k_1}{\lambda_s} s)\right. \\
         &   +  
             \theta_{A,3} \sin(\frac{2\pi m_3}{\lambda_t} t)\sin(\frac{2\pi k_3}{\lambda_s} s)
             \left. +
            \theta_{A,4} \cos(\frac{2\pi m_4}{\lambda_t} t)\sin(\frac{2\pi k_4}{\lambda_s} s)\right) x(s)d\mu_s,\\
         &   (Bx)(t)=\int\limits_{\alpha_1}^{\beta_1}I_{[\alpha,\beta]}(t) \left(\theta_{B,1} \sin(\frac{2\pi m_1}{\lambda_t} t)\cos(\frac{2\pi k_1}{\lambda_s} s)\right. \\
         &  +   
              \theta_{B,2} \cos(\frac{2\pi m_2}{\lambda_t} t)\cos(\frac{2\pi k_2}{\lambda_s} s)
            \left.
     + \theta_{B,4} \cos(\frac{2\pi m_4}{\lambda_t} t)\sin(\frac{2\pi k_4}{\lambda_s} s)\right)x(s)d\mu_s,
           \end{align*}
         for  almost every $t$. These operators satisfy $AB=\delta BA^2=BA=0$.

       \noindent\ref{SolSystemEqsWaveletExample4TermsFourierSerieArgDiff:item3}.  For $\theta_{A,2}=0,\, \theta_{A,3}=\frac{1}{\delta \sigma_1},\, \delta\not=0, \sigma_1\not=0,\sigma_2\not=0, \theta_{A,1}, \theta_{A,4},\, \theta_{B,1},\theta_{B,2},\theta_{B,3},\theta_{B,4}\in\mathbb{R} $,
           \begin{align*}
          & (Ax)(t)  = \int\limits_{\alpha_1}^{\beta_1} I_{[\alpha,\beta]}(t)\left(\theta_{A,1} \sin(\frac{2\pi m_1}{\lambda_t} t)\cos(\frac{2\pi k_1}{\lambda_s} s)\right. \\
          & \ +  
             \frac{1}{\delta \sigma_1} \sin(\frac{2\pi m_3}{\lambda_t} t)\sin(\frac{2\pi k_3}{\lambda_s} s)
             \left. +
            \theta_{A,4} \cos(\frac{2\pi m_4}{\lambda_t} t)\sin(\frac{2\pi k_4}{\lambda_s} s)\right) x(s)d\mu_s,
\\
          &  (Bx)(t)=\int\limits_{\alpha_1}^{\beta_1}I_{[\alpha,\beta]}(t) \left(\theta_{B,1} \sin(\frac{2\pi m_1}{\lambda_t} t)\cos(\frac{2\pi k_1}{\lambda_s} s)\right. \\
          & \ +  
              \theta_{B,2} \cos(\frac{2\pi m_2}{\lambda_t} t)\cos(\frac{2\pi k_2}{\lambda_s} s)
              + \theta_{B,3} \sin(\frac{2\pi m_3}{\lambda_t} t)\sin(\frac{2\pi k_3}{\lambda_s} s)\\
          &  
        \ + \left.
      \theta_{B,4} \cos(\frac{2\pi m_4}{\lambda_t} t)\sin(\frac{2\pi k_4}{\lambda_s} s)\right)x(s)d\mu_s,
           \end{align*}
         for  almost every $t$. These operators satisfy $AB=\delta BA^2=BA=0$.

         \noindent\ref{SolSystemEqsWaveletExample4TermsFourierSerieArgDiff:item4}. For $\left\{ \theta_{B,2}=0,\, \theta_{A,3}=0,\, \theta_{A,1}, \theta_{A,2}, \theta_{A,4},\,  \sigma_1\not=0,\sigma_2\not=0, \theta_{B,1},\theta_{B,3},\theta_{B,4}\in\mathbb{R} \right\}$,
           \begin{align*}
           &(Ax)(t)  = \int\limits_{\alpha_1}^{\beta_1} I_{[\alpha,\beta]}(t)\left(\theta_{A,1} \sin(\frac{2\pi m_1}{\lambda_t} t)\cos(\frac{2\pi k_1}{\lambda_s} s)\right. \\
           & +  \theta_{A,2} \cos(\frac{2\pi m_2}{\lambda_t} t)\cos(\frac{2\pi k_2}{\lambda_s} s) \left. +
            \theta_{A,4} \cos(\frac{2\pi m_4}{\lambda_t} t)\sin(\frac{2\pi k_4}{\lambda_s} s)\right) x(s)d\mu_s,\\
           & (Bx)(t)=\int\limits_{\alpha_1}^{\beta_1}I_{[\alpha,\beta]}(t) \left(\theta_{B,1} \sin(\frac{2\pi m_1}{\lambda_t} t)\cos(\frac{2\pi k_1}{\lambda_s} s)\right. \\
          & +    \theta_{B,3} \sin(\frac{2\pi m_3}{\lambda_t} t)\sin(\frac{2\pi k_3}{\lambda_s} s) \left.
     + \theta_{B,4} \cos(\frac{2\pi m_4}{\lambda_t} t)\sin(\frac{2\pi k_4}{\lambda_s} s)\right)x(s)d\mu_s,
           \end{align*}
          for almost every $t$. These operators satisfy $AB=\delta BA^2=BA=0$.

         \noindent\ref{SolSystemEqsWaveletExample4TermsFourierSerieArgDiff:item5}. For $\left\{ \theta_{B,2}=0,\, \theta_{B,3}=0,\, \theta_{A,1}, \theta_{A,2}, \theta_{A,3}, \theta_{A,4},\,  \sigma_1\not=0,\sigma_2\not=0, \theta_{B,1},\theta_{B,4}\in\mathbb{R} \right\}$,
             \begin{align*}
          & (Ax)(t)  = \int\limits_{\alpha_1}^{\beta_1} I_{[\alpha,\beta]}(t)\left(\theta_{A,1} \sin(\frac{2\pi m_1}{\lambda_t} t)\cos(\frac{2\pi k_1}{\lambda_s} s)\right. \\
            & +
             \theta_{A,2} \cos(\frac{2\pi m_2}{\lambda_t} t)\cos(\frac{2\pi k_2}{\lambda_s} s)
             +\theta_{A,3} \sin(\frac{2\pi m_3}{\lambda_t} t)\sin(\frac{2\pi k_3}{\lambda_s} s)\\
            & + \left.
            \theta_{A,4} \cos(\frac{2\pi m_4}{\lambda_t} t)\sin(\frac{2\pi k_4}{\lambda_s} s)\right) x(s)d\mu_s,\\
          &  (Bx)(t)=\int\limits_{\alpha_1}^{\beta_1}I_{[\alpha,\beta]}(t) \left(\theta_{B,1} \sin(\frac{2\pi m_1}{\lambda_t} t)\cos(\frac{2\pi k_1}{\lambda_s} s)\right. \\
               & + \left.
      \theta_{B,4} \cos(\frac{2\pi m_4}{\lambda_t} t)\sin(\frac{2\pi k_4}{\lambda_s} s)\right)x(s)d\mu_s,
           \end{align*}
         for  almost every $t$. These operators satisfy $AB=\delta BA^2=BA=0$.

       \noindent\ref{SolSystemEqsWaveletExample4TermsFourierSerieArgDiff:item6}. For  $ \theta_{B,2}=0,\, \theta_{A,3}=\frac{1}{\delta \sigma_1},\,\delta\not=0,  \sigma_1\not=0,\sigma_2\not=0,  \theta_{A,1}, \theta_{A,2}, \theta_{A,4},\, \theta_{B,1},\theta_{B,3},\theta_{B,4}\in\mathbb{R} $,
            \begin{align*}
          & (Ax)(t)  =\int\limits_{\alpha_1}^{\beta_1} I_{[\alpha,\beta]}(t)\left(\theta_{A,1} \sin(\frac{2\pi m_1}{\lambda_t} t)\cos(\frac{2\pi k_1}{\lambda_s} s)\right. \\
         & +    \theta_{A,2} \cos(\frac{2\pi m_2}{\lambda_t} t)\cos(\frac{2\pi k_2}{\lambda_s} s)
             +\frac{1}{\delta \sigma_1} \sin(\frac{2\pi m_3}{\lambda_t} t)\sin(\frac{2\pi k_3}{\lambda_s} s)\\
          & +   \left.
            \theta_{A,4} \cos(\frac{2\pi m_4}{\lambda_t} t)\sin(\frac{2\pi k_4}{\lambda_s} s)\right) x(s)d\mu_s,
            \\
           & (Bx)(t)=\int\limits_{\alpha_1}^{\beta_1}I_{[\alpha,\beta]}(t) \left(\theta_{B,1} \sin(\frac{2\pi m_1}{\lambda_t} t)\cos(\frac{2\pi k_1}{\lambda_s} s)\right. \\
            & \ +    \theta_{B,3} \sin(\frac{2\pi m_3}{\lambda_t} t)\sin(\frac{2\pi k_3}{\lambda_s} s) \left.
     + \theta_{B,4} \cos(\frac{2\pi m_4}{\lambda_t} t)\sin(\frac{2\pi k_4}{\lambda_s} s)\right)x(s)d\mu_s,
           \end{align*}
           for almost every $t$. These operators satisfy $AB=\delta BA^2=BA$.

         \noindent\ref{SolSystemEqsWaveletExample4TermsFourierSerieArgDiff:item7}. If $ \theta_{A,2}=\frac{1}{\delta \sigma_2},\, \theta_{A,3}=0,\, \delta\not=0,  \sigma_1\not=0,\sigma_2\not=0, \theta_{A,1}, \theta_{A,4},\, \theta_{B,1}, \theta_{B,2}, \theta_{B,3},\theta_{B,4}\in\mathbb{R} $,
              \begin{align*}
             &  (Ax)(t)=I_{[\alpha,\beta]}(t)\int\limits_{\alpha_1}^{\beta_1} \left(\theta_{A,1}\sin(\frac{2\pi m_1}{\lambda_t} t)\cos(\frac{2\pi k_1}{\lambda_s} s) \right.
               \\
             & +  \frac{1}{\delta \sigma_2} \cos(\frac{2\pi m_2}{\lambda_t} t)\cos(\frac{2\pi k_2}{\lambda_s} s)+
        \left.\theta_{A,4}\cos(\frac{2\pi m_4}{\lambda_t} t)\sin(\frac{2\pi k_4}{\lambda_s} s)\right)x(s)d\mu_s,
              \\
           & (Bx)(t)=I_{[\alpha,\beta]}(t)\int\limits_{\alpha_1}^{\beta_1} \left(\theta_{B,1} \sin(\frac{2\pi m_1}{\lambda_t} t)\cos(\frac{2\pi k_1}{\lambda_s} s) \right.
            \\
           & + \theta_{B,2} \cos(\frac{2\pi m_2}{\lambda_t} t)\cos(\frac{2\pi k_2}{\lambda_s} s) +\theta_{B,3} \sin(\frac{2\pi m_3}{\lambda_t} t)\sin(\frac{2\pi k_3}{\lambda_s} s)\\
     &+\left. \theta_{B,4} \cos(\frac{2\pi m_4}{\lambda_t} t)\sin(\frac{2\pi k_4}{\lambda_s} s)\right)x(s)d\mu_s,
              \end{align*}
            for almost every $t$. These operators satisfy $AB=\delta BA^2=BA$.

         \noindent\ref{SolSystemEqsWaveletExample4TermsFourierSerieArgDiff:item8}. For $\theta_{A,2}=\frac{1}{\delta \sigma_2},\, \theta_{B,3}=0,\, \delta\not=0, \sigma_1\not=0,\sigma_2\not=0, \theta_{A,1}, \theta_{A,3}, \theta_{A,4},\, \theta_{B,1},\theta_{B,2},\theta_{B,4}\in\mathbb{R} $,
             \begin{align*}
        & (Ax)(t)=I_{[\alpha,\beta]}(t)\int\limits_{\alpha_1}^{\beta_1} \left(\theta_{A,1}\sin(\frac{2\pi m_1}{\lambda_t} t)\cos(\frac{2\pi k_1}{\lambda_s} s) \right.
               \\
             & +  \frac{1}{\delta \sigma_2} \cos(\frac{2\pi m_2}{\lambda_t} t)\cos(\frac{2\pi k_2}{\lambda_s} s)
               +\theta_{A,3} \sin(\frac{2\pi m_3}{\lambda_t} t)\sin(\frac{2\pi k_3}{\lambda_s} s)\\
        \\
        &+ \left.\theta_{A,4}\cos(\frac{2\pi m_4}{\lambda_t} t)\sin(\frac{2\pi k_4}{\lambda_s} s)\right)x(s)d\mu_s,
              \\
          &  (Bx)(t)=I_{[\alpha,\beta]}(t)\int\limits_{\alpha_1}^{\beta_1} \left(\theta_{B,1} \sin(\frac{2\pi m_1}{\lambda_t} t)\cos(\frac{2\pi k_1}{\lambda_s} s) \right.
            \\
          & +  \theta_{B,2} \cos(\frac{2\pi m_2}{\lambda_t} t)\cos(\frac{2\pi k_2}{\lambda_s} s)
           \left.
        + \theta_{B,4} \cos(\frac{2\pi m_4}{\lambda_t} t)\sin(\frac{2\pi k_4}{\lambda_s} s)\right)x(s)d\mu_s,
              \end{align*}
           for  almost every $t$. These operators satisfy $AB=\delta BA^2=BA$.

         \noindent\ref{SolSystemEqsWaveletExample4TermsFourierSerieArgDiff:item9}. For $\theta_{A,2}=\frac{1}{\delta \sigma_2}, \theta_{A,3}=\frac{1}{\delta \sigma_1},\delta\not=0, \sigma_1\not=0,\sigma_2\not=0, \theta_{A,1},  \theta_{A,4}, \theta_{B,1}, \theta_{B,2},\theta_{B,3}, $ \\
$\theta_{B,4}\in\mathbb{R} $,
             \begin{align*}
           (Ax)(t)=\,&I_{[\alpha,\beta]}(t)\int\limits_{\alpha_1}^{\beta_1} \left(\theta_{A,1}\sin(\frac{2\pi m_1}{\lambda_t} t)\cos(\frac{2\pi k_1}{\lambda_s} s) \right.
              \\
             + &  
               \frac{1}{\delta \sigma_2} \cos(\frac{2\pi m_2}{\lambda_t} t)\cos(\frac{2\pi k_2}{\lambda_s} s)
               +\frac{1}{\delta \sigma_1} \sin(\frac{2\pi m_3}{\lambda_t} t)\sin(\frac{2\pi k_3}{\lambda_s} s)\\
             + &  
        \left.\theta_{A,4}\cos(\frac{2\pi m_4}{\lambda_t} t)\sin(\frac{2\pi k_4}{\lambda_s} s)\right)x(s)d\mu_s,
              \\
            (Bx)(t)=\,&I_{[\alpha,\beta]}(t)\int\limits_{\alpha_1}^{\beta_1} \left(\theta_{B,1} \sin(\frac{2\pi m_1}{\lambda_t} t)\cos(\frac{2\pi k_1}{\lambda_s} s) \right.
            \\
           + &  
            \theta_{B,2} \cos(\frac{2\pi m_2}{\lambda_t} t)\cos(\frac{2\pi k_2}{\lambda_s} s)
             + \theta_{B,3} \sin(\frac{2\pi m_3}{\lambda_t} t)\sin(\frac{2\pi k_3}{\lambda_s} s)\\
           + &  
           \left.
         \theta_{B,4} \cos(\frac{2\pi m_4}{\lambda_t} t)\sin(\frac{2\pi k_4}{\lambda_s} s)\right)x(s)d\mu_s,
              \end{align*}
             for almost every $t$. These operators satisfy $AB=\delta BA^2=BA$.

     \subsection*{Case 2:}
     In Case 2, we consider that the coefficient of $t$ and $s$ in the argument of functions $\sin (\cdot)$, $\cos(\cdot)$ is the same ($\omega $), and functions $\sin(\cdot)$ and $\cos(\cdot)$ are orthogonal in $[\alpha_1,\beta_1]$, that is, $\int\limits_{\alpha_1}^{\beta_1} \sin(\omega s)\cos(\omega s)d\mu_s=0$. Among obtained  operators, some satisfy commutation relation $AB=\delta BA^2$ but do not commute, whereas, for those  operators which commute, the commutativity conditions are obtained by direct computation or by Proposition \ref{PropositionCommutatorZeroFourSequenceCond}.
     If $\frac{m_i}{\lambda_t}=\frac{k_j}{\lambda_s}$  for all $i,j=1,2,3,4$ then we set $\omega=\frac{2\pi m_i}{\lambda_t}=\frac{ 2\pi k_j}{\lambda_s}$ for all $i,j=1,2,3,4$.
      The system $   \left\{\sin\left(\omega s\right), \cos\left(\omega s\right) \right\} $
   is orthogonal in $G_A=[\alpha_1,\beta_1]$, that is, $Q_{G_A}(\sin(\omega s),\,  \cos(\omega s))=0$ if and only if  either $\frac{\omega }{\pi}(\beta_1-\alpha_1)\in \mathbb{Z}$ or $\frac{\omega }{\pi}(\beta_1+\alpha_1)\in \mathbb{Z}$.
   Under these conditions and $G_A=G_B=[\alpha_1,\beta_1]$, we have that  $Q_{G_A}(\sin(\omega s),\,  \cos(\omega s))=0$. Hence, we get from  \eqref{CondCRWaveletExample4TermsFourierSerie} the following system of equations:
      \begin{gather}\label{SystEquationSolvingCase2FourierSeqExample}
        \left\{\begin{array}{ll}
                 \theta_{A,3}\theta_{B,1}\sigma_1+\theta_{A,1}\theta_{B,2}\sigma_2 & =\delta \theta_{A,3}\theta_{A,1}\theta_{B,3}\sigma_1^2+\delta \theta_{A,4}\theta_{A,1}\theta_{B,1}\sigma_1\sigma_2 \\
                 &  +\delta \theta_{A,1}\theta_{A,2}\theta_{B,3}\sigma_1\sigma_2+\delta \theta_{A,2}^2\theta_{B,1}\sigma_2^2 \\ 
                 \theta_{A,4}\theta_{B,1}\sigma_1+\theta_{A,2}\theta_{B,2}\sigma_2 & =\delta \theta_{A,3}\theta_{A,1}\theta_{B,4}\sigma_1^2+\delta \theta_{A,4}\theta_{A,1}\theta_{B,2}\sigma_1\sigma_2 \\
                 &  +\delta \theta_{A,1}\theta_{A,2}\theta_{B,4}\sigma_1\sigma_2+\delta \theta_{A,2}^2\theta_{B,2}\sigma_2^2 \\ 
                 \theta_{A,3}\theta_{B,3}\sigma_1+\theta_{A,1}\theta_{B,4}\sigma_2 & =\delta \theta_{A,3}^2\theta_{B,3}\sigma_1^2+\delta \theta_{A,4}\theta_{A,3}\theta_{B,1}\sigma_1\sigma_2 \\
                 &  +\delta \theta_{A,4}\theta_{A,1}\theta_{B,3}\sigma_1\sigma_2+\delta \theta_{A,2}\theta_{A,4}\theta_{B,1}\sigma_2^2 \\ 
                   \theta_{A,4}\theta_{B,3}\sigma_1+\theta_{A,2}\theta_{B,4}\sigma_2 & =\delta \theta_{A,3}^2\theta_{B,4}\sigma_1^2+\delta \theta_{A,4}\theta_{A,3}\theta_{B,2}\sigma_1\sigma_2 \\
                  &  +\delta \theta_{A,4}\theta_{A,1}\theta_{B,4}\sigma_1\sigma_2+\delta \theta_{A,2}\theta_{A,4}\theta_{B,2}\sigma_2^2
                  \end{array}\right.
        \end{gather}
       where  $\sigma_1=Q_{G_A}(\sin(\omega s),\sin(\omega s))$ and $\sigma_2=Q_{G_A}(\cos(\omega s),\cos(\omega s))$, that is,
       \begin{align}\label{ConstantSigma1Case2Omega}
       & \sigma_1=\int\limits_{\alpha_1}^{\beta_1} (\sin(\omega s))^2 d\mu_s=\left\{\begin{array}{cc}
                         0, & \mbox{ if } \omega=0 \\
                         \frac{\beta_1-\alpha_1}{2}-\frac{\cos(\omega(\alpha_1+\beta_1))\sin(\omega(\beta_1-\alpha_1))}{2\omega}, & \mbox{ if } \omega\not=0
                       \end{array} ,\right.\\ \label{ConstantSigma2Case2Omega}
    &  \sigma_2=\int\limits_{\alpha_1}^{\beta_1} (\cos(\omega s))^2 d\mu_s={\beta_1-\alpha_1}-\sigma_1 \nonumber \\
    & \hspace{3cm} =\left\{\begin{array}{cc}
                         \beta_1-\alpha_1, & \mbox{ if } \omega=0 \\
                         \frac{\beta_1-\alpha_1}{2}+\frac{\cos(\omega(\alpha_1+\beta_1))\sin(\omega(\beta_1-\alpha_1))}{2\omega}, & \mbox{ if } \omega\not=0
                       \end{array} \right. .
        \end{align}
        The  system of equations \eqref{SystEquationSolvingCase2FourierSeqExample} is linear in $\theta_{B,i}$, $i=1,2,3,4$ and can be written as follows:
       \begin{align*}&
        \left\{\begin{array}{ll}
               0 & =  (\theta_{A,3}\sigma_1-\delta\theta_{A,4}\theta_{A,1}\sigma_1\sigma_2-\delta \theta_{A,2}^2\sigma_2^2)\theta_{B,1}+\theta_{A,1}\sigma_2\theta_{B,2} \\
                &\ -(\delta \theta_{A,3}\theta_{A,1}\sigma_1^2 +\delta \theta_{A,1}\theta_{A,2}\sigma_1\sigma_2)\theta_{B,3}
                 \\ 
              0& =   \theta_{A,4}\sigma_1\theta_{B,1}+(\theta_{A,2}\sigma_2-\delta \theta_{A,4}\theta_{A,1}\sigma_1\sigma_2-\delta \theta_{A,2}^2\sigma_2^2)\theta_{B,2} \\
              & \ -(\delta \theta_{A,3}\theta_{A,1}\sigma_1^2+\delta \theta_{A,1}\theta_{A,2}\sigma_1\sigma_2)\theta_{B,4}
               \\ 
              0&= (-\delta \theta_{A,2}\theta_{A,4}\sigma_2^2-\delta \theta_{A,4}\theta_{A,3}\sigma_1\sigma_2)\theta_{B,1}\\
               & \ +  (\theta_{A,3}\sigma_1-\delta \theta_{A,3}^2\sigma_1^2-\delta \theta_{A,4}\theta_{A,1}\sigma_1\sigma_2)\theta_{B,3}
              +\theta_{A,1}\sigma_2\theta_{B,4}
              \\ 
               0 &=  (-\delta \theta_{A,4}\theta_{A,3}\sigma_1\sigma_2-\delta \theta_{A,2}\theta_{A,4}\sigma_2^2)\theta_{B,2} + \theta_{A,4}\sigma_1\theta_{B,3}\\
               & \ +(\theta_{A,2}\sigma_2-\delta \theta_{A,3}^2\sigma_1^2-\delta \theta_{A,4}\theta_{A,1}\sigma_1\sigma_2)\theta_{B,4}.
                 \end{array}\right.
       \end{align*}

       \subsection*{Case 2A:} We consider that operator $A$ is given and we look for operator $B$. Considering that $\theta_{A,i}$, $i=1,2,3,4$ are given, the  matrix equation corresponding  to \eqref{SystEquationSolvingCase2FourierSeqExample} is $V\Theta_B=0$, where $V=[v_{ij}]_{4\times 4}$,
       \begin{align*}
        & \begin{array}{llll}
                 v_{11}&= \theta_{A,3}\sigma_1-\delta\theta_{A,4}\theta_{A,1}\sigma_1\sigma_2-\delta \theta_{A,2}^2\sigma_2^2, & v_{12}&= \theta_{A,1}\sigma_2 \\
                 v_{13}&= -(\delta \theta_{A,3}\theta_{A,1}\sigma_1^2 +\delta \theta_{A,1}\theta_{A,2}\sigma_1\sigma_2), & v_{14}&= 0 \\
                 v_{21}&= \theta_{A,4}\sigma_1, & v_{22}&=\theta_{A,2}\sigma_2-\delta \theta_{A,4}\theta_{A,1}\sigma_1\sigma_2-\delta \theta_{A,2}^2\sigma_2^2 \\
                 v_{23}&=  0, & v_{24}&=-(\delta \theta_{A,3}\theta_{A,1}\sigma_1^2+\delta \theta_{A,1}\theta_{A,2}\sigma_1\sigma_2) \\
                 v_{31}&=  -\delta \theta_{A,2}\theta_{A,4}\sigma_2^2-\delta \theta_{A,4}\theta_{A,3}\sigma_1\sigma_2, & v_{32}&= 0 \\
                 v_{33}&=\theta_{A,3}\sigma_1-\delta \theta_{A,3}^2\sigma_1^2-\delta \theta_{A,4}\theta_{A,1}\sigma_1\sigma_2, & v_{34}&=\theta_{A,1}\sigma_2 \\
                 v_{41}&= 0, & v_{42}&=-\delta \theta_{A,4}\theta_{A,3}\sigma_1\sigma_2-\delta \theta_{A,2}\theta_{A,4}\sigma_2^2 \\
                 v_{43}&= \theta_{A,4}\sigma_1, & v_{44}&= \theta_{A,2}\sigma_2-\delta \theta_{A,3}^2\sigma_1^2-\delta \theta_{A,4}\theta_{A,1}\sigma_1\sigma_2
         \end{array}
       \end{align*}
       and $\Theta_B=[\theta_{B,1j}]_{1\times4}$, $\theta_{B,1j}=\theta_{B,j}$, $j=1,2,3,4$.
       The corresponding determinant is
        \begin{align*}
       &\mbox{det}(V)=\sigma_1\sigma_2\left(\theta_{A,1}^4\theta_{A,4}^4\delta^4\sigma_1^3\sigma_2^3+
          \theta_{A,1}^3\theta_{A,2}^2\theta_{A,4}^3\delta^4\sigma_1^2\sigma_2^4\right.\\
         &- 2\theta_{A,1}^3\theta_{A,2}\theta_{A,3}\theta_{A,4}^3\delta^4\sigma_1^3\sigma_2^3-
          \theta_{A,1}^3\theta_{A,2}\theta_{A,4}^4\delta^4\sigma_1^2\sigma_2^4
           \\
         &\hspace{6cm} +\theta_{A,1}^3\theta_{A,2}^2\theta_{A,4}^3\delta^4\sigma_1^4\sigma_2^2\\
         & +\theta_{A,1}^3\theta_{A,3}\theta_{A,4}^4\delta^4\sigma_1^3\sigma_2^3
           - 2\theta_{A,1}^2\theta_{A,2}^3\theta_{A,3}\theta_{A,4}^2\delta^4\sigma_1^2\sigma_2^4+
          2\theta_{A,1}^2\theta_{A,2}^2\theta_{A,3}^2\theta_{A,4}^2\delta^4\sigma_1^3\sigma_2^3
         \\
         &  +2\theta_{A,1}^2\theta_{A,2}^2\theta_{A,3}\theta_{A,4}^3\delta^4\sigma_1^2\sigma_2^4-
          2\theta_{A,1}^2\theta_{A,2}\theta_{A,3}^3\theta_{A,4}^2\delta^4\sigma_1^4\sigma_2^2
         \\
         &\hspace{6cm}   +\theta_{A,1}^2\theta_{A,2}\theta_{A,3}^2\theta_{A,4}^3\delta^4\sigma_1^3\sigma_2^3
        \\
         & -\theta_{A,1}^2\theta_{A,3}^3\theta_{A,4}^3\delta^4\sigma_1^4\sigma_2^2
          +\theta_{A,1}^2\theta_{A,2}^4\theta_{A,3}^2\theta_{A,4}\delta^4\sigma_1^2\sigma_2^4-
          2\theta_{A,1}\theta_{A,2}^3\theta_{A,3}^3\theta_{A,4}\delta^4\sigma_1^3\sigma_2^3\\
          & -\theta_{A,1}\theta_{A,2}^3\theta_{A,3}^2\theta_{A,4}^2\delta^4\sigma_1^3\sigma_2^3+
          \theta_{A,1}\theta_{A,2}^2\theta_{A,3}^4\theta_{A,4}\delta^4\sigma_1^4\sigma_2^2+
          \theta_{A,1}\theta_{A,2}^2\theta_{A,3}^3\theta_{A,4}^2\delta^4\sigma_1^3\sigma_2^3
         \\
         & +2\theta_{A,1}\theta_{A,2}\theta_{A,3}^4\theta_{A,4}^2\delta^4\sigma_1^4\sigma_2^2+
            \theta_{A,2}^4\theta_{A,3}^4\delta^4\sigma_1^3\sigma_2^3-
          \theta_{A,2}^3\theta_{A,3}^4\theta_{A,4}\delta^4\sigma_1^3\sigma_2^3
         \\
         &  -\theta_{A,2}^2\theta_{A,3}^5\theta_{A,4}\delta^4\sigma_1^4\sigma_2^2-
          \theta_{A,1}^3\theta_{A,2}\theta_{A,4}^3\delta^3\sigma_1^2\sigma_2^3
          -2\theta_{A,1}^3\theta_{A,3}\theta_{A,4}^3\delta^3\sigma_1^3\sigma_2^2
          \\
         & -\theta_{A,1}^3\theta_{A,4}^4\delta^3\sigma_1^2\sigma_2^3
           -\theta_{A,1}^2\theta_{A,2}^2\theta_{A,3}\theta_{A,4}^2\delta^3\sigma_1^2\sigma_2^3-
          \theta_{A,1}^2\theta_{A,2}^2\theta_{A,4}^3\delta^3\sigma_1\sigma_2^4\\
         & -\theta_{A,1}^2\theta_{A,2}\theta_{A,3}^2\theta_{A,4}^2\delta^3\sigma_1^3\sigma_2^2+
          4\theta_{A,1}^2\theta_{A,2}\theta_{A,3}\theta_{A,4}^3\delta^3\sigma_1^2\sigma_2^3
          -3\theta_{A,1}^2\theta_{A,3}^3\theta_{A,4}^2\delta^3\sigma_1^4\sigma_2^1\\
         &+ 2\theta_{A,1}^2\theta_{A,3}^2\theta_{A,4}^3\delta^3\sigma_1^3\sigma_2^2
           -\theta_{A,1}\theta_{A,2}^4\theta_{A,3}\theta_{A,4}^2\delta^3\sigma_1\sigma_2^4-
          \theta_{A,1}\theta_{A,2}^3\theta_{A,3}^2\theta_{A,4}\delta^3\sigma_1^2\sigma_2^3
          \\
         & +3\theta_{A,1}\theta_{A,2}^3\theta_{A,3}\theta_{A,4}^2\delta^3\sigma_1\sigma_2^4
          -2\theta_{A,1}\theta_{A,2}\theta_{A,3}^3\theta_{A,4}\delta^3\sigma_1^3\sigma_2^2
          \\
         &\hspace{6cm} +3\theta_{A,1}\theta_{A,2}\theta_{A,3}^3\theta_{A,4}^2\delta^3\sigma_1^3\sigma_2^2\\
          &-\theta_{A,1}\theta_{A,3}^5\theta_{A,4}\delta^3\sigma_1^5
           +3\theta_{A,1}\theta_{A,3}^4\theta_{A,4}^2\delta^3\sigma_1^4\sigma_2
          -\theta_{A,2}^4\theta_{A,3}^3\delta^3\sigma_1^2\sigma_2^3
          \\
         &\hspace{6cm}  -\theta_{A,1}^4\theta_{A,3}^2\theta_{A,4}\delta^3\sigma_1\sigma_2^4\\
          &-\theta_{A,2}^3\theta_{A,3}^4\delta^3\sigma_1^3\sigma_2^2+
          \theta_{A,2}^3\theta_{A,3}^3\theta_{A,4}\delta^3\sigma_1^2\sigma_2^3-
          \theta_{A,2}^2\theta_{A,3}^5\delta^3\sigma_1^4\sigma_2+
           \theta_{A,2}^2\theta_{A,3}^4\theta_{A,4}\delta^3\sigma_1^3\sigma_2^2\\
          &+\theta_{A,2}\theta_{A,3}^5\theta_{A,4}\delta^3\sigma_1^4\sigma_2
           +\theta_{A,3}^6\theta_{A,4}\delta^3\sigma_1^5-
          2\theta_{A,1}^3\theta_{A,4}^3\delta^2\sigma_1^2\sigma_2^2
          -3\theta_{A,1}^2\theta_{A,2}^2\theta_{A,4}^2\delta^2\sigma_1\sigma_2^3
          \\
         & -2\theta_{A,1}^2\theta_{A,3}^2\theta_{A,4}^2\delta^2\sigma_1^3\sigma_2 +
          \theta_{A,1}^2\theta_{A,3}\theta_{A,4}^3\delta^2\sigma_1^2\sigma_2^2-
          \theta_{A,1}\theta_{A,2}^4\theta_{A,4}\delta^2\sigma_2^4\\
         & +\theta_{A,1}\theta_{A,2}^3\theta_{A,3}\theta_{A,4}\delta^2\sigma_1\sigma_2^3
          +\theta_{A,1}\theta_{A,2}^2\theta_{A,3}^2\theta_{A,4}\delta^2\sigma_1^2\sigma_2^2
           +\theta_{A,1}\theta_{A,2}^2\theta_{A,3}\theta_{A,4}^2\delta^2\sigma_1\sigma_2^3
          \\
          &+3\theta_{A,1}\theta_{A,2}\theta_{A,3}^3\theta_{A,4}\delta^2\sigma_1^3\sigma_2
          -2\theta_{A,1}\theta_{A,2}\theta_{A,3}^3\theta_{A,4}\delta^2\sigma_1^3\sigma_2-
          \theta_{A,1}\theta_{A,3}^3\theta_{A,4}^2\delta^2\sigma_1^3\sigma_2\\
          & +\theta_{A,2}^4\theta_{A,3}\theta_{A,4}\delta^2\sigma_2^4+
          \theta_{A,2}^3\theta_{A,3}^3\delta^2\sigma_1^2\sigma_2^2
          +\theta_{A,2}^3\theta_{A,3}^2\theta_{A,4}\delta^2\sigma_1\sigma_2^3+
          \theta_{A,2}^2\theta_{A,3}^4\delta^2\sigma_1^3\sigma_2
          \\
          & +\theta_{A,2}^2\theta_{A,3}^3\theta_{A,4}\delta^2\sigma_1^2\sigma_2^2
          +\theta_{A,2}\theta_{A,3}^5\delta^2\sigma_1^4
           -\theta_{A,2}\theta_{A,3}^4\theta_{A,4}\delta^2\sigma_1^3\sigma_2
          -\theta_{A,3}^5\theta_{A,4}\delta^2\sigma_1^4
          \\
          &+\theta_{A,1}^2\theta_{A,2}\theta_{A,4}^2\delta\sigma_1\sigma_2^2+
            2\theta_{A,1}^2\theta_{A,3}\theta_{A,4}^2\delta\sigma_1^2\sigma_2 +
          \theta_{A,1}^2\theta_{A,4}^3\delta\sigma_1\sigma_2^2 +
          \theta_{A,1}\theta_{A,2}^3\theta_{A,4}\delta\sigma_2^3\\
          & +\theta_{A,1}\theta_{A,2}^2\theta_{A,3}\theta_{A,4}\delta\sigma_1\sigma_2^2-
          \theta_{A,1}\theta_{A,2}\theta_{A,3}^2\theta_{A,4}\delta\sigma_1^2\sigma_2
          -2\theta_{A,1}\theta_{A,2}\theta_{A,3}\theta_{A,4}^2\delta\sigma_1\sigma_2^2\\
          &+\theta_{A,1}\theta_{A,3}^3\theta_{A,4}\delta\sigma_2^3-
          \theta_{A,2}^3\theta_{A,3}\theta_{A,4}\delta\sigma_2^3
           -\theta_{A,2}^2\theta_{A,3}^2\theta_{A,4}\delta\sigma_1\sigma_2^2-
          \theta_{A,2}\theta_{A,3}^4\delta\sigma_1^3\\
          &-\theta_{A,2}\theta_{A,3}^3\theta_{A,4}\delta\sigma_1^2\sigma_2 +
          \theta_{A,1}^2\theta_{A,4}^2\sigma_1\sigma_2-
         \theta_{A,1}\theta_{A,2}\theta_{A,3}\theta_{A,4}\sigma_1\sigma_2\\
          &  \left. -\theta_{A,1}\theta_{A,3}\theta_{A,4}^2\sigma_1\sigma_2+
          \theta_{A,2}\theta_{A,3}^2\theta_{A,4}\sigma_1\sigma_2\right).
        \end{align*}
        The system of linear equations \eqref{SystEquationSolvingCase2FourierSeqExample} has nonzero solution if and only if determinant of $V$ is zero.
        For the case $\theta_{A,4}=0$, the determinant is
       \begin{eqnarray*}
         \mbox{det}(V)&=&\delta\theta_{A,3}^3\theta_{A,2}\sigma_1^3\sigma_2 (\theta_{A,2}^2 \delta \sigma_2^2-\theta_{A,3}\sigma_1)(\delta\theta_{A,2}\sigma_2-1)(\delta\theta_{A,3}\sigma_1-1).
       \end{eqnarray*}
       Therefore, when $\theta_{A,4}=0$, determinant of $V$ is equal zero if and only if
        one of the following cases hold:
        \begin{enumerate}
         \item\label{SolSystemEqsWaveletExample4TermsFourierSerieSameArg:item1} $
           \{\theta_{A,1}, \theta_{A,2}, \theta_{A,3}, \delta\in\mathbb{R}, \sigma_2\not=0,  \sigma_1=0\}$; 
         \item\label{SolSystemEqsWaveletExample4TermsFourierSerieSameArg:item2} $  \{\theta_{A,1},\theta_{A,2}, \theta_{A,3}, \sigma_1, \delta\in\mathbb{R},\sigma_2=0\}$; 
          \item\label{SolSystemEqsWaveletExample4TermsFourierSerieSameArg:item3} $
            \left\{\theta_{A,1},\theta_{A,2}, \theta_{A,3}\in\mathbb{R},\sigma_1\not=0, \sigma_2\not=0, \delta=0\right\}$;
           \item\label{SolSystemEqsWaveletExample4TermsFourierSerieSameArg:item4} $
            \{\theta_{A,1}\in\mathbb{R},   \theta_{A,3}\not=\frac{1}{\delta\sigma_1}, \theta_{A,3}\not=0,  \sigma_1\not=0,\sigma_2\not=0,\delta\not=0, \theta_{A,2}=0\}$; 
           \item\label{SolSystemEqsWaveletExample4TermsFourierSerieSameArg:item5} $
           \{\theta_{A,1}\in\mathbb{R},\theta_{A,2}\not=0, \delta\not=0,\sigma_1\not=0,\sigma_2\not=0,  \theta_{A,3}=0\}$; 
           \item\label{SolSystemEqsWaveletExample4TermsFourierSerieSameArg:item6}
           $\left\{\theta_{A,1}, \theta_{A,2}\in\mathbb{R}, \delta\not=0, \sigma_1\not=0,  \sigma_2\not=0, \theta_{A,3}=\frac{1}{\delta \sigma_1}\right\}$; 
           \item\label{SolSystemEqsWaveletExample4TermsFourierSerieSameArg:item7}
           $ \left\{\theta_{A,1}\in\mathbb{R}, \theta_{A,3}\not=\frac{1}{\delta\sigma_1},\, \theta_{A,3}\not=0, \delta\not=0,\sigma_1\not=0,\sigma_2\not=0, \theta_{A,2}=\frac{1}{\delta \sigma_2}\right\}$;
            \item\label{SolSystemEqsWaveletExample4TermsFourierSerieSameArg:item8}
            $ \left\{\theta_{A,1}\in\mathbb{R},\theta_{A,2}\not=0, \theta_{A,2}^2\not=\frac{1}{\delta^2 \sigma_2^2}, \delta \not=0,\, \sigma_1 \not=0,\, \sigma_2\not=0,\, \theta_{A,3}=\frac{\delta \theta_{A,2}^2\sigma_2^2}{ \sigma_1}\right\}$.
         \end{enumerate}

        We search for the corresponding solutions for $\theta_{B,i}$, $i=1,2,3,4$ in these cases.
        \\
          \noindent\ref{SolSystemEqsWaveletExample4TermsFourierSerieSameArg:item1}. If
          $\{\theta_{A,1}, \theta_{A,2}, \theta_{A,3}, \delta\in\mathbb{R}, \sigma_2\not=0,  \sigma_1=0\}$
          then we subdivide into the following subcases.
          \begin{itemize}[leftmargin=*]
            \item If $\theta_{A,1}=\theta_{A,2}=0$ then $\theta_{B,1},\theta_{B,2},\theta_{B,3},\theta_{B,4}\in\mathbb{R}$ can be any. The corresponding
                operators are
                \begin{align*}
                  (Ax)(t) & = \int\limits_{\alpha_1}^{\beta_1} I_{[\alpha,\beta]}(t) \theta_{A,3}\sin(\omega t)\sin(\omega s)x(s)d\mu_s, \\
                  (Bx)(t) & = \int\limits_{\alpha_1}^{\beta_1} I_{[\alpha,\beta]}(t)[ \theta_{B,1}\sin(\omega t)\cos(\omega s)+\theta_{B,2}\cos(\omega t)\cos(\omega s)
                  \\
                  &+\theta_{B,3}\sin(\omega t)\sin(\omega s)+\theta_{B,4}\cos(\omega t)\sin(\omega s)]x(s)d\mu_s,
                \end{align*}
              for almost every $t$. These operators satisfy $AB=\delta BA^2=BA=0$.

              \item If $\delta\not=0, \theta_{A,2}=\frac{1}{\delta \sigma_2}$ then $\theta_{B,4}=0,\ \theta_{B,1}=\delta \theta_{A,1}\theta_{B,2}\sigma_2,\ \theta_{B,2},\theta_{B,3}\in\mathbb{R}$.
                  The corresponding
                operators are
                \begin{align*}
                  (Ax)(t) & = \int\limits_{\alpha_1}^{\beta_1} I_{[\alpha,\beta]}(t)[\theta_{A,1}\sin(\omega t)\cos(\omega s)+\frac{1}{\delta \sigma_2}\cos(\omega t)\cos(\omega s)
                  \\
                 & +\theta_{A,3}\sin(\omega t)\sin(\omega s)]x(s)d\mu_s, \\
                  (Bx)(t) & = \int\limits_{\alpha_1}^{\beta_1} I_{[\alpha,\beta]}(t)[ \delta\theta_{A,1}\theta_{B,2}\sigma_2\sin(\omega t)\cos(\omega s)+\theta_{B,2}\cos(\omega t)\cos(\omega s)
                  \\
                  &+\theta_{B,3}\sin(\omega t)\sin(\omega s)]x(s)d\mu_s,
                \end{align*}
              for almost every $t$. These operators satisfy $AB=\delta BA^2=BA$.

              \item If $\delta=0$, $\theta_{A,1}=\theta_{A,2}=0$, then  $\theta_{B,1},\theta_{B,2},\theta_{B,3},\theta_{B,4}\in\mathbb{R}$ can be any. The corresponding
                operators are
                \begin{align*}
                  (Ax)(t) & = \int\limits_{\alpha_1}^{\beta_1} I_{[\alpha,\beta]}(t) \theta_{A,3}\sin(\omega t)\sin(\omega s)x(s)d\mu_s, \\
                  (Bx)(t) & = \int\limits_{\alpha_1}^{\beta_1} I_{[\alpha,\beta]}(t)[ \theta_{B,1}\sin(\omega t)\cos(\omega s)+\theta_{B,2}\cos(\omega t)\cos(\omega s)
                  \\
                  &+\theta_{B,3}\sin(\omega t)\sin(\omega s)+\theta_{B,4}\cos(\omega t)\sin(\omega s)]x(s)d\mu_s,
                \end{align*}
              for almost every $t$. These operators satisfy $AB=0$.

              \item If $\delta=0$, $\theta_{A,1}\not=0$ or $\theta_{A,2}\not=0$ then  $\theta_{B,2}=\theta_{B,4}=0,$  and $\theta_{B,1},\theta_{B,3}\in\mathbb{R}$. The corresponding
                operators are
                \begin{align*}
                  (Ax)(t) & = \int\limits_{\alpha_1}^{\beta_1} I_{[\alpha,\beta]}(t)[\theta_{A,1}\sin(\omega t)\cos(\omega s)+\theta{A,2}\cos(\omega t)\cos(\omega s)
                  \\
                 & +\theta_{A,3}\sin(\omega t)\sin(\omega s)]x(s)d\mu_s, \\
                  (Bx)(t) & = \int\limits_{\alpha_1}^{\beta_1} I_{[\alpha,\beta]}(t)[ \theta_{B,1}\sin(\omega t)\cos(\omega s)+\theta_{B,3}\sin(\omega t)\sin(\omega s)]x(s)d\mu_s,
                 \end{align*}
              for almost every $t$. These operators satisfy $AB=0$.
          \end{itemize}

          \noindent\ref{SolSystemEqsWaveletExample4TermsFourierSerieSameArg:item2}. If $\{\theta_{A,1},\theta_{A,2}, \theta_{A,3}, \sigma_1, \delta\in\mathbb{R},\sigma_2=0\}$, then we subdivide into  the following subcases.
          \begin{itemize}[leftmargin=*]
            \item If $\sigma_1=0$ then $\alpha_1=\beta_1$ which implies that operators $A=0$ and $B=0$;

            \item If $\sigma_1\not=0$ and $\theta_{A,3}=0$, then $\theta_{B,1},\theta_{B,2},\theta_{B,3},\theta_{B,4}\in\mathbb{R}$ can be any. The corresponding
                operators are
                \begin{align*}
                  (Ax)(t) & = \int\limits_{\alpha_1}^{\beta_1} I_{[\alpha,\beta]}(t) [\theta_{A,1}\sin(\omega t)\cos(\omega s)+\theta_{A,2}\cos(\omega t)\cos(\omega s)]x(s)d\mu_s, \\
                  (Bx)(t) & = \int\limits_{\alpha_1}^{\beta_1} I_{[\alpha,\beta]}(t)[ \theta_{B,1}\sin(\omega t)\cos(\omega s)+\theta_{B,2}\cos(\omega t)\cos(\omega s)
                  \\
                  &+\theta_{B,3}\sin(\omega t)\sin(\omega s)+\theta_{B,4}\cos(\omega t)\sin(\omega s)]x(s)d\mu_s,
                \end{align*}
              for almost every $t$. These operators satisfy $AB=\delta BA^2=0$. Moreover, for all $x\in L_p(\mathbb{R},\mu)$, $1<p<\infty$ we have, for almost every $t$,
\begin{multline*}
(AB-BA)x(t)=\displaystyle{\int\limits_{\alpha_1}^{\beta_1}} I_{[\alpha,\beta]}(t)\sigma_1 \theta_{A,1}[\theta_{B,3}\sin(\omega t)\cos(\omega s)\\
+\theta_{B,4}\cos(\omega t)\cos(\omega s)] x(s)d\mu_s.
\end{multline*}
              By applying Proposition \ref{PropositionCommutatorZeroFourSequenceCond}, since $\alpha_1<\beta_1$, $[\alpha,\beta]\supseteq [\alpha_1,\beta_1]$ and $\sigma_1\neq0$, operators $A$ and $B$ commute  if and only if either  $\theta_{A,1}=0$ or ($\theta_{B,3}=0$ and $\theta_{B,4}=0$).

             \item If $\sigma_1\not=0$, $\delta=0$ and $\theta_{A,3}\not=0$, then $\theta_{B,1}=\theta_{B,3}=0,$ and $\theta_{B,2},\theta_{B,4}\in\mathbb{R}$. The corresponding
                operators are
                \begin{align*}
                  (Ax)(t) & = \int\limits_{\alpha_1}^{\beta_1} I_{[\alpha,\beta]}(t) [\theta_{A,1}\sin(\omega t)\cos(\omega s)+\theta_{A,2}\cos(\omega t)\cos(\omega s)
                  \\
                  &
                  \hspace{5cm} +\theta_{A,3}\sin(\omega t)\sin(\omega s)]x(s)d\mu_s, \\
                  (Bx)(t) & = \int\limits_{\alpha_1}^{\beta_1} I_{[\alpha,\beta]}(t)[\theta_{B,2}\cos(\omega t)\cos(\omega s)
                  +\theta_{B,4}\cos(\omega t)\sin(\omega s)]x(s)d\mu_s,
                \end{align*}
              for almost every $t$. These operators satisfy $AB=0$.

              \item If $\sigma_1\not=0$, $\delta\neq0$, $\theta_{A,1}=0$, $\theta_{A,3}\not=0$ and $\theta_{A,3}\neq\frac{1}{\delta \sigma_1}$, then $\theta_{B,1}=\theta_{B,3}=\theta_{B,4}=0,$ and $\theta_{B,2}\in\mathbb{R}$. The corresponding
                operators are
                \begin{align*}
                  (Ax)(t) & = \int\limits_{\alpha_1}^{\beta_1} I_{[\alpha,\beta]}(t) [\theta_{A,2}\cos(\omega t)\cos(\omega s)
                                    +\theta_{A,3}\sin(\omega t)\sin(\omega s)]x(s)d\mu_s, \\
                  (Bx)(t) & = \int\limits_{\alpha_1}^{\beta_1} I_{[\alpha,\beta]}(t)\theta_{B,2}\cos(\omega t)\cos(\omega s)
                  x(s)d\mu_s,
                \end{align*}
              for almost every $t$. These operators satisfy $AB=\delta BA^2=BA=0$.

              \item If $\sigma_1\not=0$, $\delta\neq0$, and $\theta_{A,3}=\frac{1}{\delta \sigma_1}$, then $\theta_{B,4}=0,$ $\theta_{B,1}=\delta \theta_{A,1}\theta_{B,3}\sigma_1$,  $\theta_{B,3},\theta_{B,2}\in\mathbb{R}$. The corresponding
                operators are
                \begin{align*}
                  (Ax)(t) & = \int\limits_{\alpha_1}^{\beta_1} I_{[\alpha,\beta]}(t) [\theta_{A,1}\sin(\omega t)\cos(\omega s)+ \theta_{A,2}\cos(\omega t)\cos(\omega s)\\
                       &             +\frac{1}{\delta\sigma_1}\sin(\omega t)\sin(\omega s)]x(s)d\mu_s, \\
                  (Bx)(t) & = \int\limits_{\alpha_1}^{\beta_1} I_{[\alpha,\beta]}(t)[\delta \theta_{A,1}\theta_{B,3}\sigma_1\sin(\omega t)\cos(\omega s) + \theta_{B,2}\cos(\omega t)\cos(\omega s)\\
                  &+\theta_{B,3}\sin(\omega t)\sin(\omega s)]
                  x(s)d\mu_s,
                \end{align*}
              for almost every $t$. These operators satisfy $AB=\delta BA^2=BA$.

          \end{itemize}

          \noindent\ref{SolSystemEqsWaveletExample4TermsFourierSerieSameArg:item3}. If  $\left\{\theta_{A,1},\theta_{A,2}, \theta_{A,3}\in\mathbb{R},\sigma_1\not=0, \sigma_2\not=0, \delta=\theta_{A,4}=0\right\}$, then we subdivide into the following subcases.
            \begin{itemize}[leftmargin=*]
              \item If $\theta_{A,2}=0$, $\theta_{A,3}\not=0$, then
              we  get the following solution
            \begin{equation*}
              \left\{\theta_{B,1}=-\frac{\theta_{A,1}\sigma_2\theta_{B,2}}{\theta_{A,3\sigma_1}}, \theta_{B,3}=-\frac{\theta_{A,1}\sigma_2\theta_{B,4}}{\theta_{A,3\sigma_1}},\theta_{B,2},\theta_{B,4}\in\mathbb{R} \right\}.
            \end{equation*}
            This corresponds to the following operators:
       \begin{eqnarray*}
         (Ax)(t)&=&\int\limits_{\alpha_1}^{\beta_1}I_{[\alpha,\beta]}(t)[\theta_{A,1}\sin(\omega t)\cos(\omega s)+\theta_{A,3}\sin(\omega t)\sin(\omega s)]x(s)d\mu_s,\\
         (Bx)(t)&=&\int\limits_{\alpha_1}^{\beta_1}I_{[\alpha,\beta]}(t)\left(-\frac{\theta_{A,1}\sigma_2\theta_{B,2}}{\theta_{A,3}\sigma_1}\sin(\omega t)\cos(\omega s)+\theta_{B,2}\cos(\omega t)\cos(\omega s)
         \right. \\
         && \left.
         -\frac{\theta_{A,1}\sigma_2\theta_{B,4}}{\theta_{A,3}\sigma_1}\sin(\omega t)\sin(\omega s)+\theta_{B,4}\cos(\omega t)\sin(\omega s)
         \right)x(s)d\mu_s,
       \end{eqnarray*}
        for almost every $t$. These operators satisfy $AB=0$.

          \item If $\theta_{A,3}=0$ and ($\theta_{A,1}\not=0$ or $\theta_{A,2}\not=0$), then
              we  get the following solution
            \begin{equation*}
              \left\{\theta_{B,2}=\theta_{B,4}=0,\, \theta_{B,1},\theta_{B,3}\in\mathbb{R} \right\}.
            \end{equation*}
            This corresponds to the following operators:
             \begin{gather*}
         (Ax)(t)=\int\limits_{\alpha_1}^{\beta_1}I_{[\alpha,\beta]}(t)[\theta_{A,1}\sin(\omega t)\cos(\omega s)+\theta_{A,2}\cos(\omega t)\cos(\omega s)]x(s)d\mu_s,\\
         (Bx)(t)=\int\limits_{\alpha_1}^{\beta_1}I_{[\alpha,\beta]}(t)\left(\theta_{B,1}\sin(\omega t)\cos(\omega s)+\theta_{B,3}\sin(\omega t)\sin(\omega s)\right)x(s)d\mu_s,
       \end{gather*}
         for almost every $t$. These operators satisfy $AB=0$.
          \end{itemize}

       \noindent\ref{SolSystemEqsWaveletExample4TermsFourierSerieSameArg:item4}. If  $\{\theta_{A,1}\in\mathbb{R},\,\theta_{A,3}\not=\frac{1}{\delta \sigma_1},   \delta\not=0,\sigma_1\not=0,\sigma_2\not=0, \theta_{A,2}=\theta_{A,4}=0\}$, then we subdivide into the following subcases.
       \begin{itemize}[leftmargin=*]
         \item If $\theta_{A,3}\not=0$, then we  get the following solution:
       \begin{equation*}
         \left\{\theta_{B,1}=-\frac{\theta_{A,1}\sigma_2\theta_{B,2}}{\theta_{A,3\sigma_1}},\theta_{B,2}\in\mathbb{R}, \theta_{B,3}=\theta_{B,4}=0\right\}.
       \end{equation*}
       This corresponds to the following operators:
       \begin{eqnarray*}
         (Ax)(t)&=&\int\limits_{\alpha_1}^{\beta_1}I_{[\alpha,\beta]}(t)[\theta_{A,1}\sin(\omega t)\cos(\omega s)+\theta_{A,3}\sin(\omega t)\sin(\omega s)]x(s)d\mu_s,\\
         (Bx)(t)&=&\int\limits_{\alpha_1}^{\beta_1}I_{[\alpha,\beta]}(t)\left(-\frac{\theta_{A,1}\sigma_2\theta_{B,2}}{\theta_{A,3}\sigma_1}\sin(\omega t)\cos(\omega s)+\theta_{B,2}\cos(\omega t)\cos(\omega s)\right)\\
         && \cdot x(s)d\mu_s,
       \end{eqnarray*}
        for almost every $t$. These operators satisfy $AB=\delta BA^2=BA=0$.
        \item If $\theta_{A,3}=0$ and $\theta_{A,1}\not=0$, then we get the following solution
        \begin{equation*}
         \left\{\theta_{B,1},\theta_{B,3}\in\mathbb{R},\ \theta_{B,2}=\theta_{B,4}=0\right\}.
       \end{equation*}
       This corresponds to the following operators:
       \begin{eqnarray*}
         && (Ax)(t)=\int\limits_{\alpha_1}^{\beta_1}I_{[\alpha,\beta]}(t)\theta_{A,1}\sin(\omega t)\cos(\omega s)x(s)d\mu_s,\\
         &&(Bx)(t)=\int\limits_{\alpha_1}^{\beta_1}I_{[\alpha,\beta]}(t)\left(\theta_{B,1}\sin(\omega t)\cos(\omega s)+\theta_{B,3}\sin(\omega t)\sin(\omega s)\right)
         x(s)d\mu_s,
       \end{eqnarray*}
        for almost every $t$. They satisfy $AB=\delta BA^2=0$. Moreover, for all $x\in L_p(\mathbb{R},\mu)$, $1<p<\infty$ we have
         \begin{equation*}
           (AB-BA)x(t)=-(BA)x(t)=-\theta_{B,3}\theta_{A,1}\sigma_1\int\limits_{\alpha_1}^{\beta_1} I_{[\alpha,\beta]}(t) \sin(\omega t)\cos(\omega s)x(s)d\mu_s
         \end{equation*}
         for almost every $t$.  By applying Proposition \ref{PropositionCommutatorZeroFourSequenceCond}, since $\alpha_1<\beta_1$, $[\alpha,\beta]\supseteq [\alpha_1,\beta_1]$ and $\sigma_1\neq0$, operators $A$ and $B$ commute  if and only if either $\theta_{A,1}=0$ ($A=0$) or $\theta_{B,3}=0$.
        \end{itemize}

        \noindent\ref{SolSystemEqsWaveletExample4TermsFourierSerieSameArg:item5}. If  $\{\theta_{A,1}\in\mathbb{R},\,\theta_{A,2}\not=0,  \delta\not=0,\sigma_1\not=0,\sigma_2\not=0, \theta_{A,3}=\theta_{A,4}=0\}$, then we subdivide into the following subcases.
        \begin{itemize}[leftmargin=*]
        \item If $\theta_{A,2}\not=\frac{1}{\delta \sigma_2}$ then we get the following solution:
       \begin{equation*}
         \left\{\theta_{B,1}=-\frac{\theta_{A,1}\sigma_1\theta_{B,3}}{\theta_{A,2\sigma_2}},\theta_{B,3}\in\mathbb{R}, \theta_{B,2}=\theta_{B,4}=0\right\}.
       \end{equation*}
        This corresponds to the following operators:
       \begin{eqnarray*}
         &&\hspace{-7mm}(Ax)(t)=\int\limits_{\alpha_1}^{\beta_1}I_{[\alpha,\beta]}(t)[\theta_{A,1}\sin(\omega t)\cos(\omega s)+\theta_{A,2}\cos(\omega t)\cos(\omega s)] x(s)d\mu_s,\\
        && \hspace{-7mm} (Bx)(t)=\int\limits_{\alpha_1}^{\beta_1}\hspace{-1mm}I_{[\alpha,\beta]}(t)\left(-\frac{\theta_{A,1}\sigma_1}{\theta_{A,2}\sigma_2}\theta_{B,3}\sin(\omega t)\cos(\omega s)+\theta_{B,3}\sin(\omega t)\sin(\omega s)\right)\hspace{-1mm} 
         x(s)d\mu_s,
       \end{eqnarray*}
        for almost every $t$. These operators satisfy $AB=\delta BA^2=BA=0$.

         \item If $\theta_{A,2}=\frac{1}{\delta \sigma_2}$, then we get the following solution:
           \begin{equation*}
         \left\{\theta_{B,1}=\delta\theta_{A,1}(\sigma_2 \theta_{B,2}-\sigma_1\theta_{B,3}), \, \theta_{B,2},\theta_{B,3}\in\mathbb{R}, \theta_{B,4}=0\right\}.
       \end{equation*}
       This corresponds to the following operators:
       \begin{eqnarray*}
         (Ax)(t)&=&\int\limits_{\alpha_1}^{\beta_1}I_{[\alpha,\beta]}(t)\left(\theta_{A,1}\sin(\omega t)\cos(\omega s)+\frac{1}{\delta \sigma_2}\cos(\omega t)\cos(\omega s)\right)x(s)d\mu_s,\\
         (Bx)(t)&=&\int\limits_{\alpha_1}^{\beta_1}I_{[\alpha,\beta]}(t)\left(\delta\theta_{A,1}(\sigma_2\theta_{B,2}-\sigma_1\theta_{B,3})\sin(\omega t)\cos(\omega s)\right.\\
         & & \left.+\theta_{B,2}\cos(\omega t)\cos(\omega s)+\theta_{B,3}\sin(\omega t)\sin(\omega s)\right)x(s)d\mu_s,
       \end{eqnarray*}
        for almost every $t$. These operators satisfy $AB=\delta BA^2=BA$.
        \end{itemize}

       \noindent\ref{SolSystemEqsWaveletExample4TermsFourierSerieSameArg:item6}. If $\{\theta_{A,1}, \theta_{A,2} \in\mathbb{R}, \sigma_1\not=0, \sigma_2\not=0,\ \delta\not=0, \theta_{A,3}=\frac{1}{\delta\sigma_1},\, \theta_{A,4}=0\}$ then we divide in the following subcases.
        \begin{itemize}[leftmargin=*]
            \item If $\theta_{A,2}\sigma_2\delta\not=1$, $\theta_{A,2}\sigma_2\delta\not=-1$, $\theta_{A,1}\not=0$ and $\theta_{A,2}\not=0$, then
         we get the following solution:
       \begin{equation*}
         \left\{\theta_{B,3}=-\frac{\theta_{B,1}(\delta\sigma_2\theta_{A,2}-1)}{\delta\theta_{A,1\sigma_1}},\theta_{B,1}\in\mathbb{R}, \theta_{B,2}=\theta_{B,4}=0\right\}.
       \end{equation*}
       This corresponds to the following operators:
       \begin{eqnarray*}
         (Ax)(t)&=&\int\limits_{\alpha_1}^{\beta_1}I_{[\alpha,\beta]}(t)[\theta_{A,1}\sin(\omega t)\cos(\omega s)+ \theta_{A,2}\cos(\omega t)\cos(\omega s) \\
          & & +\frac{1}{\delta\sigma_1}\sin(\omega t)\sin(\omega s)]x(s)d\mu_s,\\
         (Bx)(t)&=&\int\limits_{\alpha_1}^{\beta_1}I_{[\alpha,\beta]}(t)\left(\theta_{B,1}\sin(\omega t)\cos(\omega s)\right. \\
         && \left. -\frac{\theta_{B,1}(\delta\sigma_2\theta_{A,2}-1)}{\delta\theta_{A,1}\sigma_1}\sin(\omega t)\sin(\omega s)\right)
         x(s)d\mu_s,
       \end{eqnarray*}
       for almost every $t$. 
       They satisfy $AB=\delta BA^2=BA$.

        \item If $\theta_{A,2}=-\frac{1}{\delta \sigma_2}$, $\theta_{A,1}\not=0$ and $\theta_{A,2}\not=0$, then
         we get the following solution:
       \begin{equation*}
         \left\{\theta_{B,1},\theta_{B,3}\in\mathbb{R}, \theta_{B,2}=\theta_{B,4}=0\right\}.
       \end{equation*}
       This corresponds to the following operators:
       \begin{eqnarray*}
         (Ax)(t)&=&\int\limits_{\alpha_1}^{\beta_1}I_{[\alpha,\beta]}(t)\left(\theta_{A,1}\sin(\omega t)\cos(\omega s)- \frac{1}{\delta \sigma_2}\cos(\omega t)\cos(\omega s) \right. \\
          & & \left. +\frac{1}{\delta\sigma_1}\sin(\omega t)\sin(\omega s)\right)x(s)d\mu_s,\\
         (Bx)(t)&=&\int\limits_{\alpha_1}^{\beta_1}I_{[\alpha,\beta]}(t)\left(\theta_{B,1}\sin(\omega t)\cos(\omega s)+ \theta_{B,3}\sin(\omega t)\sin(\omega s)\right)x(s)d\mu_s,
       \end{eqnarray*}
        for almost every $t$. They satisfy $AB=\delta BA^2$. Moreover, for all $x\in L_p(\mathbb{R},\mu)$, $1<p<\infty$ we have
         \begin{equation*}
           (AB-BA)x(t)=\left(\frac{2\theta_{B,1}}{\delta}-\sigma_1 \theta_{A,1}\theta_{B,3}\right)\int\limits_{\alpha_1}^{\beta_1} I_{[\alpha,\beta]}(t) \sin(\omega t)\cos(\omega s)x(s)d\mu_s
         \end{equation*}
         for almost every $t$.  By applying Proposition \ref{PropositionCommutatorZeroFourSequenceCond}, since $\alpha_1<\beta_1$, $[\alpha,\beta]\supseteq [\alpha_1,\beta_1]$ and $\delta\not=0$,  operators $A$ and $B$ commute  if and only if  $\theta_{B,1}=\frac{\delta}{2} \sigma_1 \theta_{A,1}\theta_{B,3}$.

        \item If $\theta_{A,2}\sigma_2\delta\not=1$, $\theta_{A,1}=0$ and $\theta_{A,2}\not=0$, then
        we get the following solution:
       \begin{equation*}
         \left\{\theta_{B,3}\in\mathbb{R}, \theta_{B,1}=\theta_{B,2}=\theta_{B,4}=0\right\}.
       \end{equation*}
       This corresponds to the following operators:
       \begin{eqnarray*}
         (Ax)(t)&=&\int\limits_{\alpha_1}^{\beta_1}I_{[\alpha,\beta]}(t)[\theta_{A,2}\cos(\omega t)\cos(\omega s)  +\frac{1}{\delta\sigma_1}\sin(\omega t)\sin(\omega s)] x(s)d\mu_s,\\
         (Bx)(t)&=&\int\limits_{\alpha_1}^{\beta_1}I_{[\alpha,\beta]}(t)\theta_{B,3}\sin(\omega t)\sin(\omega s)x(s)d\mu_s,
       \end{eqnarray*}
       for almost every $t$. These operators satisfy $AB=\delta BA^2=BA$.

        \item If $\theta_{A,2}=0$ and $\theta_{A,1}=0$, then
        we get the following solution:
       \begin{equation*}
         \left\{\theta_{B,2},\theta_{B,3}\in\mathbb{R}, \theta_{B,1}=\theta_{B,4}=0\right\}.
       \end{equation*}
        This corresponds to the following operators:
       \begin{eqnarray*}
         (Ax)(t)&=&\int\limits_{\alpha_1}^{\beta_1}I_{[\alpha,\beta]}(t)\frac{1}{\delta\sigma_1}\sin(\omega t)\sin(\omega s)x(s)d\mu_s,\\
         (Bx)(t)&=&\int\limits_{\alpha_1}^{\beta_1}I_{[\alpha,\beta]}(t)\left(\theta_{B,2}\cos(\omega t)\cos(\omega s) +\theta_{B,3}\sin(\omega t)\sin(\omega s)\right)x(s)d\mu_s,
       \end{eqnarray*}
       for almost every $t$. These operators satisfy $AB=\delta BA^2=BA$.

       \item If $\theta_{A,2}=0$ and $\theta_{A,1}\not=0$, then
        we get the following solution:
       \begin{equation*}
         \left\{\theta_{B,1}=\delta\theta_{A,1}(\sigma_1\theta_{B,3}-\sigma_2\theta_{B,2}),\, \theta_{B,2},\theta_{B,3}\in\mathbb{R}, \, \theta_{B,4}=0\right\}.
       \end{equation*}
        This corresponds to the following operators:
       \begin{eqnarray*}
         (Ax)(t)&=&\int\limits_{\alpha_1}^{\beta_1}[\theta_{A,1}\sin(\omega t)\cos(\omega s)+ \frac{1}{\delta\sigma_1}\sin(\omega t)\sin(\omega s)]x(s)d\mu_s,\\
         (Bx)(t)&=&\int\limits_{\alpha_1}^{\beta_1}\left(\delta\theta_{A,1}(\sigma_1\theta_{B,3}-\sigma_2\theta_{B,2})\sin(\omega t)\cos(\omega s) \right.\\
         & & \left.
         +\theta_{B,2}\cos(\omega t)\cos(\omega s) +\theta_{B,3}\sin(\omega t)\sin(\omega s)\right)x(s)d\mu_s,
       \end{eqnarray*}
       for almost every $t$. These operators satisfy $AB=\delta BA^2=BA$.

       \item If $\theta_{A,2}=\frac{1}{\delta\sigma_2}$ and $\theta_{A,1}\not=0$, then
        we get the following solution:
       \begin{equation*}
         \left\{\theta_{B,2}=\frac{2\sigma_1}{\sigma_2}\theta_{B,3},\, \theta_{B,1},\theta_{B,3}\in\mathbb{R}, \, \theta_{B,4}=0\right\}.
       \end{equation*}
        This corresponds to the following operators:
       \begin{eqnarray*}
         (Ax)(t)&=&\int\limits_{\alpha_1}^{\beta_1}I_{[\alpha,\beta]}(t)\left(\theta_{A,1}\sin(\omega t)\cos(\omega s)+ \frac{1}{\delta\sigma_2}\cos(\omega t)\cos(\omega s) \right. \\
         & &
         \left. +\frac{1}{\delta\sigma_1}\sin(\omega t)\sin(\omega s)\right)x(s)d\mu_s,\\
         (Bx)(t)&=&\int\limits_{\alpha_1}^{\beta_1}I_{[\alpha,\beta]}(t)\left(\theta_{B,1}\sin(\omega t)\cos(\omega s)
         +\frac{2\sigma_1}{\sigma_2}\theta_{B,3}\cos(\omega t)\cos(\omega s) \right. \\
         & & \left.
         +\theta_{B,3}\sin(\omega t)\sin(\omega s)\right)x(s)d\mu_s,
       \end{eqnarray*}
       for almost every $t$. They satisfy $AB=\delta BA^2$. Moreover, for all $x\in L_p(\mathbb{R},\mu)$, $1<p<\infty$ we have, for almost every $t$,
         \begin{equation*}
           (AB-BA)x(t)=\sigma_1 \theta_{A,1}\theta_{B,3}\int\limits_{\alpha_1}^{\beta_1} I_{[\alpha,\beta]}(t) \sin(\omega t)\cos(\omega s)x(s)d\mu_s.
         \end{equation*}
         By applying Proposition \ref{PropositionCommutatorZeroFourSequenceCond}, since $\alpha_1<\beta_1$, $[\alpha,\beta]\supseteq [\alpha_1,\beta_1]$ and $\sigma_1\neq0$, operators $A$ and $B$ commute  if and only if either $\theta_{A,1}=0$ or $\theta_{B,3}=0$.

       \item If $\theta_{A,2}=\frac{1}{\delta\sigma_2}$ and $\theta_{A,1}=0$, then
        we get the following solution:
       \begin{equation*}
         \left\{\theta_{B,1}, \theta_{B,2},\theta_{B,3},\theta_{B,4}\in\mathbb{R}\right\}.
       \end{equation*}
        This corresponds to the following operators:
       \begin{eqnarray*}
         (Ax)(t)&=&\int\limits_{\alpha_1}^{\beta_1}I_{[\alpha,\beta]}(t)\left(\frac{1}{\delta\sigma_2}\cos(\omega t)\cos(\omega s)+  \frac{1}{\delta\sigma_1}\sin(\omega t)\sin(\omega s)\right)x(s)d\mu_s,\\
         (Bx)(t)&=&\int\limits_{\alpha_1}^{\beta_1}I_{[\alpha,\beta]}(t)\left(\theta_{B,1}\sin(\omega t)\cos(\omega s)
         +\theta_{B,2}\cos(\omega t)\cos(\omega s)\right. \\
         & & \left.
         +\theta_{B,3}\sin(\omega t)\sin(\omega s)+\theta_{B,4}\cos(\omega t)\sin(\omega s)\right)x(s)d\mu_s,
       \end{eqnarray*}
       for almost every $t$. These operators satisfy $AB=\delta BA^2=BA$.
     \end{itemize}

       \noindent\ref{SolSystemEqsWaveletExample4TermsFourierSerieSameArg:item7}. If $\{\theta_{A,1}\in\mathbb{R}, \theta_{A,3}\not=0, \theta_{A,3}\not=\frac{1}{\delta\sigma_1}, \sigma_1\not=0, \sigma_2\not=0,\ \delta\not=0, \theta_{A,2}=\frac{1}{\delta\sigma_2},\, \theta_{A,4}=0\}$, then we get the following solution:
       \begin{equation*}
        \left\{\theta_{B,1}=\frac{\delta \theta_{A,1}\sigma_2}{1-\delta \theta_{A,3}\sigma_1}\theta_{B,2}, \theta_{B,2}\in\mathbb{R},\, \theta_{B,3}=\theta_{B,4}=0\right\}.
       \end{equation*}
        This corresponds to the following operators:
       \begin{eqnarray*}
         (Ax)(t)&=&\int\limits_{\alpha_1}^{\beta_1}I_{[\alpha,\beta]}(t)\left(\theta_{A,1}\sin(\omega t)\cos(\omega s)+ \frac{1}{\delta\sigma_2}\cos(\omega t)\cos(\omega s) \right. \\
         & & \left. + \theta_{A,3}\sin(\omega t)\sin(\omega s)\right)x(s)d\mu_s,\\
         (Bx)(t)&=&\int\limits_{\alpha_1}^{\beta_1}I_{[\alpha,\beta]}(t)\left(\frac{\delta\theta_{A,1}\sigma_2}{1-\delta\theta_{A,3}\sigma_1}
         \theta_{B,2}\sin(\omega t)\cos(\omega s)\right. \\
         & & \left.+\theta_{B,2}\cos(\omega t)\cos(\omega s)\right)x(s)d\mu_s,
       \end{eqnarray*}
       for almost every $t$. These operators satisfy $AB=\delta BA^2=BA$.

       \noindent\ref{SolSystemEqsWaveletExample4TermsFourierSerieSameArg:item8}. If $ \left\{\theta_{A,1}\in\mathbb{R},\theta_{A,2}\not=0, \theta_{A,2}^2\not=\frac{1}{\delta^2 \sigma_2^2},\, \delta \not=0,\, \sigma_1 \not=0,\, \sigma_2\not=0,\, \theta_{A,3}=\frac{\delta \theta_{A,2}^2\sigma_2^2}{ \sigma_1}\right\}$,
        then we get the following solution:
            \begin{equation*}
                    \left\{\theta_{B,1}\in\mathbb{R},\, \theta_{B,2}=\theta_{B,3}=\theta_{B,4}=0\right\}.
            \end{equation*}
         This corresponds to the following operators:
         \begin{eqnarray*}
         (Ax)(t)&=&\int\limits_{\alpha_1}^{\beta_1}I_{[\alpha,\beta]}(t)\left(\theta_{A,1}\sin(\omega t)\cos(\omega s)+ \theta_{A,2}\cos(\omega t)\cos(\omega s) \right. \\
         & & \left. + \frac{\delta\theta_{A,2}^2\sigma_2^2}{\sigma_1}\sin(\omega t)\sin(\omega s)\right)x(s)d\mu_s,\\
         (Bx)(t)&=&\int\limits_{\alpha_1}^{\beta_1}I_{[\alpha,\beta]}(t)\theta_{B,1}
         \sin(\omega t)\cos(\omega s)x(s)d\mu_s,
        \end{eqnarray*}
         for almost every $t$. They satisfy $AB=\delta BA^2$.
          Moreover, for all $x\in L_p(\mathbb{R},\mu)$, $1<p<\infty$ we have, for almost every $t$,
         \begin{equation*}
           (AB-BA)x(t)=\theta_{B,1}\theta_{A,2}\sigma_2(\delta \sigma_2 \theta_{A,2}-1)\int\limits_{\alpha_1}^{\beta_1} I_{[\alpha,\beta]}(t) \sin(\omega t)\cos(\omega s)x(s)d\mu_s.
         \end{equation*}
          By applying Proposition \ref{PropositionCommutatorZeroFourSequenceCond}, since $\alpha_1<\beta_1$, $[\alpha,\beta]\supseteq [\alpha_1,\beta_1]$, $\sigma_2\neq0$, $\delta\not=0$ and $\theta_{A,2}\not=\frac{1}{\delta \sigma_2}$, operators $A$ and $B$ commute  if and only if $\theta_{A,2}=0$ or  $\theta_{B,1}=0$ ($B=0$).

      \subsection*{Case 2B:}  We consider that operator $B$ is given and we look for operator $A$. One can search for operator $A$ by solving the system of equation in $\theta_{A,i}$, $i=1,2,3,4$. In this case the system of equations \eqref{SystEquationSolvingCase2FourierSeqExample} is nonlinear and generally more difficult to solve. Therefore, we consider particular cases.  If operator $B$ is such that the coefficients satisfy $\theta_{B,1}=\theta_{B,3}$ and $\theta_{B,2}=\theta_{B,4}$ then, we have, for instance, the following non-zero solution for $\{\theta_{A,i}\}$:
      \begin{gather}\nonumber
        \left\{\theta_{A,1}=\theta_{A,3}=\frac{\theta_{B,1}}{\delta(\theta_{B,1}\sigma_1+\theta_{B,2}\sigma_2)},\
        \theta_{A,2}=\theta_{A,4}=\frac{\theta_{B,2}}{\delta(\theta_{B,1}\sigma_1+\theta_{B,2}\sigma_2)},\right. \\
        \label{SolSystEquationSolvinForAGivenBpairsCoefEqual1}
        \left. {\delta(\theta_{B,1}\sigma_1+\theta_{B,2}\sigma_2)}\not=0
        \right\},
       \end{gather}
       where $\sigma_1$ is given by \eqref{ConstantSigma1Case2Omega} and $\sigma_2$ is given by \eqref{ConstantSigma2Case2Omega}.
       The corresponding pair of operators $(A,B)$, if coefficients of operator $A$ are given in \eqref{SolSystEquationSolvinForAGivenBpairsCoefEqual1},  is
      \begin{eqnarray*}
        (Ax)(t)&=&\frac{1}{\delta(\theta_{B,1}\sigma_1+\theta_{B,2}\sigma_2)}\int\limits_{\alpha_1}^{\beta_1} \left(\theta_{B,1}\sin(\omega t)\cos(\omega s)\right.
        +\theta_{B,2}\cos(\omega t)\cos(\omega s)\\
        && +\theta_{B,1}\sin(\omega t)\sin(\omega s)+
        \left.\theta_{B,2}\cos(\omega t)\sin(\omega s)\right)x(s)d\mu_s\\
        (Bx)(t)&=&\int\limits_{\alpha_1}^{\beta_1} [\theta_{B,1}\sin(\omega t)\cos(\omega s)+\theta_{B,2}\cos(\omega t)\cos(\omega s)\\
        & & +\theta_{B,1}\sin(\omega t)\sin(\omega s)+\theta_{B,2}\cos(\omega t)\sin(\omega s)]x(s)d\mu_s,
        \end{eqnarray*}
        for almost every $t$.  Moreover, they satisfy $AB=\delta BA^2=BA$.
   %
   %

       If operator $B$ is such that the coefficients satisfy $\theta_{B,3}=\frac{2\sigma_2}{\sigma_1}\theta_{B,2}$, $\theta_{B,4}=0$, $\theta_{B,1},\, \theta_{B,2}\in\mathbb{R}$, $\sigma_1\not=0$ is given by \eqref{ConstantSigma1Case2Omega}, $\sigma_2\not=0$ is given by \eqref{ConstantSigma2Case2Omega}, then $AB=\delta BA^2$ if and only if one of the following hold
       \begin{enumerate}[leftmargin=*]
         \item\label{ExampleFourierWaveletOpGivenBCase2BfindB:item1}  $\{\theta_{A,1}=\theta_{A,2}=\theta_{A,3}=\theta_{A,4}=0\}$
         \item\label{ExampleFourierWaveletOpGivenBCase2BfindB:item2}  $\{\theta_{B,1}=\theta_{B,2}=\theta_{B,3}=\theta_{B,4}=0\}$
          \item\label{ExampleFourierWaveletOpGivenBCase2BfindB:item3}  $\{\theta_{A,3}=\frac{\delta  \sigma_2^2}{\sigma_1}\theta^2_{A,2}, \theta_{A,1}, \theta_{A,2}, \delta\in\mathbb{R},\theta_{A,4}= \theta_{B,2}=\theta_{B,3}=\theta_{B,4}=0, \sigma_1,\sigma_2\in\mathbb{R}\setminus\{0\},\\
              \theta_{B,1}\in\mathbb{R}\setminus\{0\}\}$
          \item\label{ExampleFourierWaveletOpGivenBCase2BfindB:item4} $\left\{\theta_{A,1}=\frac{\theta_{B,1}}{\theta_{B,2}\delta \sigma_2}, \theta_{A,3}=\frac{1}{\delta\sigma_1},\
       \theta_{A,2}=\theta_{A,4}=0,
          \delta\not=0,\sigma_1\not=0, \sigma_2\not=0,  \theta_{B,2}\not=0
        \right\}$
        \item\label{ExampleFourierWaveletOpGivenBCase2BfindB:item5} $\left\{\theta_{A,1}=-\frac{\theta_{B,1}}{\theta_{B,2}\delta \sigma_2}, \theta_{A,2}=\frac{1}{\delta\sigma_2},
        \theta_{A,3}=\theta_{A,4}=0, \delta\not=0, \sigma_1\not=0, \sigma_2\not=0,  \theta_{B,2}\not=0
        \right\}$
        \item\label{ExampleFourierWaveletOpGivenBCase2BfindB:item6} $\left\{\theta_{A,1}=0, \theta_{A,2}=\frac{1}{\delta\sigma_2},
        \theta_{A,3}=\frac{1}{\delta \sigma_1}, \theta_{A,4}=0,  \delta\not=0,\,\sigma_1\not=0, \sigma_2\not=0, \theta_{B,1}\not=0\right.$  \\ $\left.    \theta_{B,2}\not=0
        \right\}$
        \item\label{ExampleFourierWaveletOpGivenBCase2BfindB:item7} $\left\{\theta_{A,1}\in\mathbb{R}\setminus\{0\}, \theta_{A,2}=\frac{1}{\delta\sigma_2},
        \theta_{A,3}=\frac{1}{\delta \sigma_1}, \theta_{A,4}=0,  \delta\not=0,\sigma_1\not=0, \sigma_2\not=0,
           \theta_{B,1}\not=0, \right.$\\  $\left. \theta_{B,2}=\theta_{B,3}=\theta_{B,4}=0
        \right\}$

         \item\label{ExampleFourierWaveletOpGivenBCase2BfindB:item8}  $\left\{\theta_{A,1}=0,\  \theta_{A,2}=\frac{1}{\delta\sigma_2},\
        \theta_{A,3}=\frac{1}{\delta \sigma_1}, \ \theta_{A,4}\in\mathbb{R},\,  \delta\not=0,\,\sigma_1\not=0,\, \sigma_2\not=0,\, \theta_{B,1}=0,\right.$ \\ $\left. \theta_{B,2}\not=0
        \right\}$.
       \end{enumerate}
       We now search for the corresponding operators in these different cases.

       \noindent\ref{ExampleFourierWaveletOpGivenBCase2BfindB:item1}. If  $\{\theta_{A,1}=\theta_{A,2}=\theta_{A,3}=\theta_{A,4}=0\}$, then $A=0$.

         \noindent\ref{ExampleFourierWaveletOpGivenBCase2BfindB:item2}. If  $\{\theta_{B,1}=\theta_{B,2}=\theta_{B,3}=\theta_{B,4}=0\}$, then $B=0$.

         \noindent\ref{ExampleFourierWaveletOpGivenBCase2BfindB:item3} If  $\{\theta_{A,3}=\frac{\delta  \sigma_2^2}{\sigma_1}\theta^2_{A,2}, \theta_{A,1}, \theta_{A,2}, \delta\in\mathbb{R}, \theta_{A,4}= \theta_{B,2}=\theta_{B,3}=\theta_{B,4}=0, \sigma_1,\sigma_2,\in\mathbb{R}\setminus\{0\},
         \\ \theta_{B,1}\in\mathbb{R}\setminus\{0\}\},$ then
         the corresponding pairs of non-zero operators $(A,B)$ are
        \begin{eqnarray*}
          (A x)(t) &=& \int\limits_{\alpha_1}^{\beta_1} I_{[\alpha,\beta]}(t) \left({\theta_{A,1}}\sin(\omega t)\cos(\omega s) +\theta_{A,2}\cos(\omega t)\cos(\omega s) \right. \\
          && \hspace{3cm} +\left.
        \frac{\delta \sigma_2^2 \theta_{A,2}^2}{\sigma_1}\sin(\omega t)\sin(\omega s)\right)x(s)d\mu_s,\\
        (Bx)(t)&=&\int\limits_{\alpha_1}^{\beta_1} I_{[\alpha,\beta]}(t)\theta_{B,1}\sin(\omega t)\cos(\omega s)x(s)d\mu_s,
        \end{eqnarray*}
        for almost every $t$. They satisfy $AB=\delta BA^2$. This pair of operators was found in case 2A, item \ref{SolSystemEqsWaveletExample4TermsFourierSerieSameArg:item8}.

        \noindent\ref{ExampleFourierWaveletOpGivenBCase2BfindB:item4}. If  $\left\{\theta_{A,1}=\frac{\theta_{B,1}}{\theta_{B,2}\delta \sigma_2},\ \theta_{A,3}=\frac{1}{\delta\sigma_1},\
       \theta_{A,2}=\theta_{A,4}=0,
        \,  \delta\not=0,\,\sigma_1\not=0,\, \sigma_2\not=0, \, \theta_{B,2}\not=0,\right. \\
        \left. \theta_{B,1}\in\mathbb{R}
        \right\},$  then the corresponding pairs of non-zero operators $(A,B)$ are
        \begin{eqnarray*}
          (A x)(t) &=& \int\limits_{\alpha_1}^{\beta_1} I_{[\alpha,\beta]}(t) \left(\frac{\theta_{B,1}}{\delta\theta_{B,2}\sigma_2}\sin(\omega t)\cos(\omega s) +\frac{1}{\sigma_1\delta}
        \sin(\omega t)\sin(\omega s)\right)x(s)d\mu_s,\\
        (Bx)(t)&=&\int\limits_{\alpha_1}^{\beta_1} I_{[\alpha,\beta]}(t)\big(\theta_{B,1}\sin(\omega t)\cos(\omega s)+\theta_{B,2}\cos(\omega t)\cos(\omega s)\\
        & & \hspace{3cm} +\frac{2\sigma_2}{\sigma_1}\theta_{B,2}\sin(\omega t)\sin(\omega s)\big)x(s)d\mu_s,
        \end{eqnarray*}
        for almost every $t$. They satisfy $AB=\delta BA^2=BA$.

        \noindent\ref{ExampleFourierWaveletOpGivenBCase2BfindB:item5}. If  $\left\{\theta_{A,1}=-\frac{\theta_{B,1}}{\theta_{B,2}\delta \sigma_2},\  \theta_{A,2}=\frac{1}{\delta\sigma_2},\
        \theta_{A,3}=\theta_{A,4}=0,\,  \delta\not=0,\,\sigma_1\not=0,\, \sigma_2\not=0, \, \theta_{B,2}\not=0,
        \right.\\
        \left.
         \theta_{B,1}\in\mathbb{R}
        \right\},$ then the corresponding pairs of non-zero operators $(A,B)$ are
$$   \begin{array}{lll}
          (A x)(t)&=& {\displaystyle \int\limits_{\alpha_1}^{\beta_1}} I_{[\alpha,\beta]}(t) \left(-\frac{\theta_{B,1}}{\delta\theta_{B,2}\sigma_2}\sin(\omega t)\cos(\omega s) +\frac{1}{\sigma_2\delta}
        \cos(\omega t)\cos(\omega s)\right)x(s)d\mu_s,\\
(Bx)(t)&=& {\displaystyle \int\limits_{\alpha_1}^{\beta_1}} I_{[\alpha,\beta]}(t)\big(\theta_{B,1}\sin(\omega t)\cos(\omega s)+\theta_{B,2}\cos(\omega t)\cos(\omega s)\\
        & & +\frac{2\sigma_2}{\sigma_1}\theta_{B,2}\sin(\omega t)\sin(\omega s)\big)x(s)d\mu_s,
        \end{array}
$$
        for almost every $t$. They satisfy $AB=\delta BA^2=BA$.

        \noindent\ref{ExampleFourierWaveletOpGivenBCase2BfindB:item6}. If  $\left\{\theta_{A,1}=0,\  \theta_{A,2}=\frac{1}{\delta\sigma_2},\
        \theta_{A,3}=\frac{1}{\delta \sigma_1},\, \theta_{A,4}=0, \delta\not=0,\,\sigma_1\not=0, \sigma_2\not=0, \theta_{B,1}\not=0, \right.$ \\ $\left.\theta_{B,2}\not=0
        \right\},$ then the corresponding pairs of non-zero operators $(A,B)$ are
        \begin{eqnarray*}
          (A x)(t)&=&\int\limits_{\alpha_1}^{\beta_1} I_{[\alpha,\beta]}(t) \left(\frac{1}{\sigma_2\delta}
        \cos(\omega t)\cos(\omega s)+\frac{1}{\delta \sigma_1}\sin(\omega t)\sin(\omega s)\right)x(s)d\mu_s,\\
        (Bx)(t)&=&\int\limits_{\alpha_1}^{\beta_1} I_{[\alpha,\beta]}(t)\big(\theta_{B,1}\sin(\omega t)\cos(\omega s)+\theta_{B,2}\cos(\omega t)\cos(\omega s)\\
        & & \hspace{2cm} +\frac{2\sigma_2}{\sigma_1}\theta_{B,2}\sin(\omega t)\sin(\omega s)\big)x(s)d\mu_s,
        \end{eqnarray*}
        for almost every $t$. They satisfy $AB=\delta BA^2=BA$.

        \noindent\ref{ExampleFourierWaveletOpGivenBCase2BfindB:item7}. If  $\theta_{A,1}\in\mathbb{R}\setminus\{0\},  \theta_{A,2}=\frac{1}{\delta\sigma_2},
        \theta_{A,3}=\frac{1}{\delta \sigma_1}, \theta_{A,4}=0,\,  \delta\not=0,\,\sigma_1\not=0, \sigma_2\not=0, \theta_{B,1}\not=0,$\\ $\theta_{B,2}=\theta_{B,3}=\theta_{B,4}=0,$ then the corresponding pairs of non-zero operators $(A,B)$ are
        \begin{eqnarray*}
          (A x)(t)&=&\int\limits_{\alpha_1}^{\beta_1} I_{[\alpha,\beta]}(t) \big(\theta_{A,1}\sin(\omega t)\cos(\omega s) +\frac{1}{\sigma_2\delta}
        \cos(\omega t)\cos(\omega s)  \\
        && \hspace{3cm} +  \frac{1}{\delta \sigma_1}\sin(\omega t)\sin(\omega s)\big)x(s)d\mu_s,\\
        (Bx)(t)&=&\int\limits_{\alpha_1}^{\beta_1} I_{[\alpha,\beta]}(t)\theta_{B,1}\sin(\omega t)\cos(\omega s)x(s)d\mu_s,
        \end{eqnarray*}
        for almost every $t$. They satisfy $AB=\delta BA^2=BA$.

        \noindent\ref{ExampleFourierWaveletOpGivenBCase2BfindB:item8}. If  $\left\{\theta_{A,1}=0, \theta_{A,2}=\frac{1}{\delta\sigma_2},
        \theta_{A,3}=\frac{1}{\delta \sigma_1}, \theta_{A,4}\in\mathbb{R}\setminus\{0\},  \delta\not=0, \sigma_1\not=0,
          \sigma_2\not=0, \theta_{B,1}=0,\right.$
          $\left.  \theta_{B,2}\not=0
        \right\},$ then the corresponding pairs of non-zero operators $(A,B)$ are
        \begin{eqnarray*}
          (A x)(t)&=&\int\limits_{\alpha_1}^{\beta_1} I_{[\alpha,\beta]}(t) \big(\frac{1}{\sigma_2\delta}
        \cos(\omega t)\cos(\omega s) + \frac{1}{\delta\sigma_1}\sin(\omega t)\sin(\omega s) \\
        && \hspace{3cm} +  \theta_{A,4}\cos(\omega t)\sin(\omega s)\big)x(s)d\mu_s,\\
        (Bx)(t)&=&\int\limits_{\alpha_1}^{\beta_1} I_{[\alpha,\beta]}(t)\big(\theta_{B,2}\cos(\omega t)\cos(\omega s)+ \frac{2\sigma_2}{\sigma_1}\theta_{B,2}\sin(\omega t)\sin(\omega s)\big)x(s)d\mu_s,
        \end{eqnarray*}
        for almost every $t$. They satisfy $AB=\delta BA^2$. Moreover, for $x\in L_p(\mathbb{R},\mu)$, $1< p< \infty$,
       \begin{equation*}
         (AB-BA)x(t)=\theta_{A,4}\sigma_2 \theta_{B,2}\int\limits_{\alpha_1}^{\beta_1} I_{[\alpha,\beta]}(t)\cos(\omega t)\sin(\omega s)x(s)d\mu_s,
       \end{equation*}
       for almost every $t$.  By applying Proposition \ref{PropositionCommutatorZeroFourSequenceCond}, since $\alpha_1<\beta_1$, $[\alpha,\beta]\supseteq [\alpha_1,\beta_1]$ and $\sigma_2\neq0$, operators $A$ and $B$ commute  if and only if either $\theta_{A,4}=0$ or $\theta_{B,2}=0$ ($B=0$).

       If operator $B$ is such that coefficients satisfy $\theta_{B,1}=\theta_{B,4}=0$, and the first coefficient of operator $A$ is zero, that is, $\theta_{A,1}=0$, then by  \eqref{CondCRWaveletExample4TermsFourierSerie} the commutation relations  $AB=\delta BA^2$ holds true if and only if
       \begin{gather}\label{SystemEqWaveletExTermsFourierArgEqualGivenBFindAa1=0b4=0b3=b2const}
          \left\{\begin{array}{ll}
            \theta_{A,2}\theta_{B,2}\sigma_2 &= \delta \theta_{A,2}^2\theta_{B,2}\sigma_2^2\ , \\
           \theta_{A,3}\theta_{B,3}\sigma_1 & =\delta \theta_{A,3}^2\theta_{B,3}\sigma_1^2\ , \\
      \theta_{A,4}\theta_{B,3}\sigma_1 &= \delta \theta_{A,4}\theta_{A,3}\theta_{B,2}\sigma_1\sigma_2+\delta\theta_{A,2}\theta_{A,4}\theta_{B,2}\sigma_2^2.
          \end{array}\right.
       \end{gather}
       If $\sigma_1\not=0$ is given by \eqref{ConstantSigma1Case2Omega}, $\sigma_2\not=0$ is given by \eqref{ConstantSigma2Case2Omega}, then the system of equations \eqref{SystemEqWaveletExTermsFourierArgEqualGivenBFindAa1=0b4=0b3=b2const} is satisfied  if and only if one of the following hold
       \begin{enumerate}
       \item\label{solSystemEqWaveletExFourierGivenBfindAa1=0:item1}
          $\{\theta_{B,1}=\theta_{B,4}=0, \theta_{B,2}, \theta_{B,3},\delta\in\mathbb{R},\, \theta_{A,1}=\theta_{A,2}=\theta_{A,3}=\theta_{A,4}=0\}$;
       \item\label{solSystemEqWaveletExFourierGivenBfindAa1=0:item2}
          $\{\theta_{B,1}=\theta_{B,2}=\theta_{B,4}=0, \theta_{B,3}\in\mathbb{R}\setminus\{0\},\, \theta_{A,1}=\theta_{A,3}=\theta_{A,4}, \,\theta_{A,2}\in\mathbb{R}\setminus\{0\}, \delta=0\}$;
        \item\label{solSystemEqWaveletExFourierGivenBfindAa1=0:item3}
          $\{\theta_{B,1}=\theta_{B,3}=\theta_{B,4}=0, \theta_{B,2}\in\mathbb{R}\setminus\{0\}, \theta_{A,1}=\theta_{A,2}=0, \theta_{A,3},\theta_{A,4}\in\mathbb{R}, \delta=0,\\ |\theta_{A,3}|+|\theta_{A,4}|>0\}$;
       \item\label{solSystemEqWaveletExFourierGivenBfindAa1=0:item4}
          $\{\theta_{B,1}=\theta_{B,4}=0, \theta_{B,2}\in\mathbb{R}\setminus
           \{0\},\theta_{B,3}\in\mathbb{R}\setminus\{\theta_{B,2}\frac{\sigma_2}{\sigma_1}\}, \theta_{A,1}= \theta_{A,2}=\theta_{A,4}=0,\\ \theta_{A,3}=\frac{1}{\delta\sigma_1}, \delta\not=0\}$;
        \item\label{solSystemEqWaveletExFourierGivenBfindAa1=0:item5}
          $\{\theta_{B,1}=\theta_{B,4}=0, \theta_{B,2}\in\mathbb{R}\setminus
           \{0\},\theta_{B,3}=\theta_{B,2}\frac{\sigma_2}{\sigma_1}, \theta_{A,1}= \theta_{A,2}=0, \theta_{A,3}=\frac{1}{\delta\sigma_1},\\ \theta_{A,4}\in\mathbb{R}, \delta\not=0\}$;
         \item\label{solSystemEqWaveletExFourierGivenBfindAa1=0:item6}
          $\{\theta_{B,1}=\theta_{B,4}=0, \theta_{B,2}\in\mathbb{R}\setminus
           \{0\},\,\theta_{B,3}\in\mathbb{R}\setminus\{\theta_{B,2}\frac{\sigma_2}{\sigma_1}\}, \theta_{A,1}= \theta_{A,3}=\theta_{A,4}=0,\\ \theta_{A,2}=\frac{1}{\delta\sigma_2}, \delta\not=0\}$;
            \item\label{solSystemEqWaveletExFourierGivenBfindAa1=0:item7}
          $\{\theta_{B,1}=\theta_{B,4}=0, \theta_{B,2}\in\mathbb{R}\setminus
           \{0\},\theta_{B,3}=\theta_{B,2}\frac{\sigma_2}{\sigma_1}, \theta_{A,1}= \theta_{A,3}=0, \theta_{A,2}=\frac{1}{\delta\sigma_2},\\ \theta_{A,4}\in\mathbb{R}, \delta\not=0 \}$;
           \item\label{solSystemEqWaveletExFourierGivenBfindAa1=0:item8}
          $\{\theta_{B,1}=\theta_{B,4}=0, \theta_{B,2}\in\mathbb{R}\setminus
           \{0\}, \theta_{B,3}\in\mathbb{R}\setminus\{\theta_{B,2}\frac{2\sigma_2}{\sigma_1}\}, \theta_{A,1}= \theta_{A,4}=0,  \delta\not=0,\\ \theta_{A,2}=\frac{1}{\delta\sigma_2}, \theta_{A,3}=\frac{1}{\delta\sigma_1} \}$;
           \item\label{solSystemEqWaveletExFourierGivenBfindAa1=0:item9}
          $\{\theta_{B,1}=\theta_{B,4}=0, \theta_{B,2}\in\mathbb{R}\setminus
           \{0\},\theta_{B,3}=\theta_{B,2}\frac{2\sigma_2}{\sigma_1}, \theta_{A,1}= 0, \theta_{A,2}=\frac{1}{\delta\sigma_2}, \delta\not=0,\\ \theta_{A,3}=\frac{1}{\delta\sigma_1}, \theta_{A,4}\in\mathbb{R}\}$;
           \item\label{solSystemEqWaveletExFourierGivenBfindAa1=0:item10}
          $\{\theta_{B,1}=\theta_{B,2}=\theta_{B,4}=0, \theta_{B,3}\in\mathbb{R}\setminus
           \{0\},\, \theta_{A,1}= \theta_{A,3}=\theta_{A,4}=0, \theta_{A,2}\in\mathbb{R}\setminus\{0\},\\ \delta\not=0\};$
           \item\label{solSystemEqWaveletExFourierGivenBfindAa1=0:item11}
          $\{\theta_{B,1}=\theta_{B,2}=\theta_{B,4}=0, \theta_{B,3}\in\mathbb{R}\setminus
           \{0\}, \theta_{A,1}= \theta_{A,4}=0, \theta_{A,3}=\frac{1}{\delta\sigma_1} \theta_{A,2}\in\mathbb{R},\\ \delta\not=0\}$;
           \item\label{solSystemEqWaveletExFourierGivenBfindAa1=0:item12}
          $\{\theta_{B,1}=\theta_{B,3}=\theta_{B,4}=0, \theta_{B,2}\in\mathbb{R}\setminus
           \{0\}, \theta_{A,1}= \theta_{A,2}=\theta_{A,4}=0, \theta_{A,3}\in\mathbb{R}\setminus\{0\},\\ \delta\not=0\};$
           \item\label{solSystemEqWaveletExFourierGivenBfindAa1=0:item13}
          $\{\theta_{B,1}=\theta_{B,3}=\theta_{B,4}=0, \theta_{B,2}\in\mathbb{R}\setminus
           \{0\}, \theta_{A,1}=\theta_{A,2}= \theta_{A,3}=0, \theta_{A,4}\in\mathbb{R}, \delta\not=0\};$
           \item\label{solSystemEqWaveletExFourierGivenBfindAa1=0:item14}
          $\{\theta_{B,1}=\theta_{B,3}=\theta_{B,4}=0, \theta_{B,2}\in\mathbb{R}\setminus
           \{0\}, \theta_{A,1}= \theta_{A,4}=0, \theta_{A,2}=\frac{1}{\delta\sigma_1}, \theta_{A,3}\in\mathbb{R},\\  \delta\not=0\};$
           \item\label{solSystemEqWaveletExFourierGivenBfindAa1=0:item15}
          $\{\theta_{B,1}=\theta_{B,3}=\theta_{B,4}=0, \theta_{B,2}\in\mathbb{R}\setminus
           \{0\}, \theta_{A,1}= 0,\, \theta_{A,2}=\frac{1}{\delta\sigma_2}, \theta_{A,3}=-\frac{1}{\delta \sigma_1}, \delta\not=0,\\ \theta_{A,4}\in\mathbb{R}\}$.
       \end{enumerate}

       We now consider the corresponding operators in these different cases.\\
       \noindent\ref{solSystemEqWaveletExFourierGivenBfindAa1=0:item1}.
       If $\{\theta_{B,1}=\theta_{B,4}=0, \theta_{B,2}, \theta_{B,3},\delta\in\mathbb{R}, \theta_{A,1}=\theta_{A,2}=\theta_{A,3}=\theta_{A,4}=0, \sigma_1\neq0,\sigma_2\neq0\},$ then $A=0$;

       \noindent\ref{solSystemEqWaveletExFourierGivenBfindAa1=0:item2}.
         If $\{\theta_{B,1}=\theta_{B,2}=\theta_{B,4}=0, \theta_{B,3}\in\mathbb{R}\setminus\{0\}, \theta_{A,1}=\theta_{A,3}=\theta_{A,4}=0, \theta_{A,2}\in\mathbb{R}\setminus\{0\},\\ \delta=0, \sigma_1\neq0,\sigma_2\neq0\}$, then the corresponding operators are
         \begin{eqnarray*}
           (Ax)(t) &=&  \int\limits_{\alpha_1}^{\beta_1} \theta_{A,2}I_{[\alpha,\beta]}(t) \cos(\omega t)\cos(\omega s)x(s)d\mu_s, \\
           (Bx)(t) &=&  \int\limits_{\alpha_1}^{\beta_1} \theta_{B,3}I_{[\alpha,\beta]}(t) \sin(\omega t)\sin(\omega s)x(s)d\mu_s,
         \end{eqnarray*}
         for almost every $t$. Moreover, these operators satisfy $AB=0$.

         \noindent\ref{solSystemEqWaveletExFourierGivenBfindAa1=0:item3}.
           If $\{\theta_{B,1}=\theta_{B,3}=\theta_{B,4}=0, \theta_{B,2}\in\mathbb{R}\setminus\{0\},\, \theta_{A,1}=\theta_{A,2}=0, \theta_{A,3},\theta_{A,4}\in\mathbb{R}, \delta=0,\\|\theta_{A,3}|+|\theta_{A,4}|>0, \sigma_1\neq0,\sigma_2\neq0\}$, then the corresponding operators are
          \begin{eqnarray*}
           (Ax)(t) &=&  \int\limits_{\alpha_1}^{\beta_1} I_{[\alpha,\beta]}(t)[\theta_{A,3} \sin(\omega t)\sin(\omega s)+\theta_{A,4}\cos(\omega t)\sin(\omega s)]x(s)d\mu_s, \\
           (Bx)(t) &=&  \int\limits_{\alpha_1}^{\beta_1} \theta_{B,2}I_{[\alpha,\beta]}(t) \cos(\omega t)\cos(\omega s)x(s)d\mu_s,
         \end{eqnarray*}
         for almost every $t$. These operators satisfy $AB=0$.

          \noindent\ref{solSystemEqWaveletExFourierGivenBfindAa1=0:item4}. If
          $\{\theta_{B,1}=\theta_{B,4}=0, \theta_{B,2}\in\mathbb{R}\setminus
           \{0\},\,\theta_{B,3}\in\mathbb{R}\setminus\left\{\theta_{B,2}\frac{\sigma_2}{\sigma_1}\right\},\, \theta_{A,1}= \theta_{A,2}=0, \theta_{A,4}=0,\\ \theta_{A,3}=\frac{1}{\delta\sigma_1}, \delta\not=0,\sigma_1\neq0,\sigma_2\neq0\}$, then the corresponding operators are
          \begin{eqnarray*}
           (Ax)(t) &=&  \int\limits_{\alpha_1}^{\beta_1}  I_{[\alpha,\beta]}(t)\frac{1}{\delta\sigma_1}\sin(\omega t)\sin(\omega s)x(s)d\mu_s, \\
           (Bx)(t) &=&  \int\limits_{\alpha_1}^{\beta_1} I_{[\alpha,\beta]}(t) \left(\theta_{B,2} \cos(\omega t)\cos(\omega s) +\theta_{B,3}\sin(\omega t)\sin(\omega s)\right)x(s)d\mu_s,
         \end{eqnarray*}
          for almost every $t$. These operators satisfy $AB=\delta BA^2=BA$.

          \noindent\ref{solSystemEqWaveletExFourierGivenBfindAa1=0:item5}. If
          $\{\theta_{B,1}=\theta_{B,4}=0, \theta_{B,2}\in\mathbb{R}\setminus
           \{0\},\,\theta_{B,3}=\theta_{B,2}\frac{\sigma_2}{\sigma_1},\, \theta_{A,1}= \theta_{A,2}=0,\\ \theta_{A,3}=\frac{1}{\delta\sigma_1}, \theta_{A,4}\in\mathbb{R},\, \delta\not=0, \sigma_1\neq0,\sigma_2\neq0\}$, then the corresponding operators are
           \begin{eqnarray*}
           (Ax)(t) &=&  \int\limits_{\alpha_1}^{\beta_1}  I_{[\alpha,\beta]}(t)\left(\frac{1}{\delta\sigma_1}\sin(\omega t)\sin(\omega s)+\theta_{A,4}\cos(\omega t)\sin(\omega s)\right) x(s)d\mu_s, \\
           (Bx)(t) &=&  \int\limits_{\alpha_1}^{\beta_1}I_{[\alpha,\beta]}(t) \left(\theta_{B,2} \cos(\omega t)\cos(\omega s) +\frac{\sigma_2\theta_{B,2}}{\sigma_1}\sin(\omega t)\sin(\omega s)\right)x(s)d\mu_s,
           \end{eqnarray*}
           for almost every $t$. These operators satisfy $AB=\delta BA^2=BA$.

           \noindent\ref{solSystemEqWaveletExFourierGivenBfindAa1=0:item6}. If
          $\{\theta_{B,1}=\theta_{B,4}=0, \theta_{B,2}\in\mathbb{R}\setminus
           \{0\},\,\theta_{B,3}\in\mathbb{R}\setminus\{\theta_{B,2}\frac{\sigma_2}{\sigma_1}\}, \theta_{A,1}= \theta_{A,3}=\theta_{A,4}=0,\\ \theta_{A,2}=\frac{1}{\delta\sigma_2}, \delta\not=0,\sigma_1\neq0,\sigma_2\neq0\}$,
           the corresponding operators are
\begin{eqnarray*}
           (Ax)(t) &=&  \int\limits_{\alpha_1}^{\beta_1}  I_{[\alpha,\beta]}(t)\frac{1}{\delta\sigma_2}\cos(\omega t)\cos(\omega s)x(s)d\mu_s, \\
           (Bx)(t) &=&  \int\limits_{\alpha_1}^{\beta_1} I_{[\alpha,\beta]}(t)\left(\theta_{B,2} \cos(\omega t)\cos(\omega s) +\theta_{B,3}\sin(\omega t)\sin(\omega s)\right)x(s)d\mu_s,
           \end{eqnarray*}
           for almost every $t$. These operators satisfy $AB=\delta BA^2=BA$.

           \noindent\ref{solSystemEqWaveletExFourierGivenBfindAa1=0:item7}. If
          $\{\theta_{B,1}=\theta_{B,4}=0, \theta_{B,2}\in\mathbb{R}\setminus
           \{0\},\theta_{B,3}=\theta_{B,2}\frac{\sigma_2}{\sigma_1}, \theta_{A,1}= \theta_{A,3}=0, \theta_{A,2}=\frac{1}{\delta\sigma_2}, \delta\not=0,\\ \theta_{A,4}\in\mathbb{R},  \sigma_1\neq0,\sigma_2\neq0\}$, then
           the corresponding operators are
           \begin{eqnarray*}
           (Ax)(t) &=&  \int\limits_{\alpha_1}^{\beta_1}  I_{[\alpha,\beta]}(t)\left(\frac{1}{\delta\sigma_2}\cos(\omega t)\cos(\omega s)+\theta_{A,4}\cos(\omega t)\sin(\omega s)\right)x(s)d\mu_s, \\
           (Bx)(t) &=&  \int\limits_{\alpha_1}^{\beta_1} I_{[\alpha,\beta]}(t)\left(\theta_{B,2} \cos(\omega t)\cos(\omega s) +\frac{\sigma_2\theta_{B,2}}{\sigma_1}\sin(\omega t)\sin(\omega s)\right)x(s)d\mu_s,
           \end{eqnarray*}
           for almost every $t$. These operators satisfy $AB=\delta BA^2=BA$.

           \noindent\ref{solSystemEqWaveletExFourierGivenBfindAa1=0:item8}. If
          $\{\theta_{B,1}=\theta_{B,4}=0, \theta_{B,2}\in\mathbb{R}\setminus
           \{0\}, \theta_{B,3}\in\mathbb{R}\setminus\{\theta_{B,2}\frac{2\sigma_2}{\sigma_1}\}, \theta_{A,1}= \theta_{A,4}=0, \theta_{A,2}=\frac{1}{\delta\sigma_2},\\ \theta_{A,3}=\frac{1}{\delta\sigma_1},\, \delta\not=0, \sigma_1\neq0,\sigma_2\neq0\}$, then
           the corresponding operators are
           \begin{eqnarray*}
           (Ax)(t) &=&  \int\limits_{\alpha_1}^{\beta_1}  I_{[\alpha,\beta]}(t)\left(\frac{1}{\delta\sigma_2}\cos(\omega t)\cos(\omega s)+\frac{1}{\delta \sigma_1}\sin(\omega t)\sin(\omega s)\right)x(s)d\mu_s, \\
           (Bx)(t) &=&  \int\limits_{\alpha_1}^{\beta_1} I_{[\alpha,\beta]}(t)\left(\theta_{B,2} \cos(\omega t)\cos(\omega s) +\theta_{B,3}\sin(\omega t)\sin(\omega s)\right)x(s)d\mu_s,
           \end{eqnarray*}
           for almost every $t$. These operators satisfy $AB=\delta BA^2=BA$.

           \noindent\ref{solSystemEqWaveletExFourierGivenBfindAa1=0:item9}. If
          $\{\theta_{B,1}=\theta_{B,4}=0, \theta_{B,2}\in\mathbb{R}\setminus
           \{0\},\,\theta_{B,3}=\theta_{B,2}\frac{2\sigma_2}{\sigma_1},\, \theta_{A,1}= 0, \theta_{A,2}=\frac{1}{\delta\sigma_2}, \\ \theta_{A,3}=\frac{1}{\delta\sigma_1},\, \theta_{A,4}\in\mathbb{R}, \, \delta\not=0, \sigma_1\neq 0, \sigma_2\neq0\}$, then
           the corresponding operators are
           \begin{eqnarray*}
           (Ax)(t) &=&  \int\limits_{\alpha_1}^{\beta_1}  I_{[\alpha,\beta]}(t)\left(\frac{1}{\delta\sigma_2}\cos(\omega t)\cos(\omega s)+\frac{1}{\delta\sigma_1}\sin(\omega t)\sin(\omega s)\right. \\
           & & \left. +\theta_{A,4}\cos(\omega t)\sin(\omega s)\right)x(s)d\mu_s, \\
           (Bx)(t) &=&  \int\limits_{\alpha_1}^{\beta_1}  I_{[\alpha,\beta]}(t)\left(\theta_{B,2}\cos(\omega t)\cos(\omega s) +\frac{2\sigma_2\theta_{B,2}}{\sigma_1}\sin(\omega t)\sin(\omega s)\right)x(s)d\mu_s,
           \end{eqnarray*}
          for almost every $t$. They satisfy $AB=\delta BA^2$. The operator $AB-BA$ was computed in Case 2B, item \ref{ExampleFourierWaveletOpGivenBCase2BfindB:item8}.

           \noindent\ref{solSystemEqWaveletExFourierGivenBfindAa1=0:item10}. If
          $\{\theta_{B,1}=\theta_{B,2}=\theta_{B,4}=0, \theta_{B,3}\in\mathbb{R}\setminus
           \{0\}, \theta_{A,1}= \theta_{A,3}=\theta_{A,4}=0, \theta_{A,2}\in\mathbb{R}\setminus\{0\},\\  \delta\not=0, \sigma_1\neq0,\sigma_2\neq0\}$, then
           the corresponding operators are
           \begin{eqnarray*}
           (Ax)(t) &=&  \int\limits_{\alpha_1}^{\beta_1}  I_{[\alpha,\beta]}(t)\theta_{A,2}\cos(\omega t)\cos(\omega s)x(s)d\mu_s, \\
           (Bx)(t) &=&  \int\limits_{\alpha_1}^{\beta_1}  I_{[\alpha,\beta]}(t) \theta_{B,3}\sin(\omega t)\sin(\omega s)x(s)d\mu_s,
           \end{eqnarray*}
           for almost every $t$. These operators satisfy $AB=\delta BA^2=BA=0$.

           \noindent\ref{solSystemEqWaveletExFourierGivenBfindAa1=0:item11}. If
          $\{\theta_{B,1}=\theta_{B,2}=\theta_{B,4}=0, \theta_{B,3}\in\mathbb{R}\setminus
           \{0\}, \theta_{A,1}= \theta_{A,4}=0, \theta_{A,3}=\frac{1}{\delta\sigma_1} \theta_{A,2}\in\mathbb{R}, \\ \delta\not=0, \sigma_1\neq0,\sigma_2\neq0\}$, then
           the corresponding operators are
           \begin{eqnarray*}
           (Ax)(t) &=&  \int\limits_{\alpha_1}^{\beta_1}  I_{[\alpha,\beta]}(t)\left(\theta_{A,2}\cos(\omega t)\cos(\omega s)+\frac{1}{\delta\sigma_1}\sin(\omega t)\sin(\omega s) \right)x(s)d\mu_s, \\
           (Bx)(t) &=&  \int\limits_{\alpha_1}^{\beta_1}  I_{[\alpha,\beta]}(t) \theta_{B,3}\sin(\omega t)\sin(\omega s)x(s)d\mu_s,
           \end{eqnarray*}
           for almost every $t$. These operators satisfy $AB=\delta BA^2=BA$.

           \noindent\ref{solSystemEqWaveletExFourierGivenBfindAa1=0:item12}. If
           $\{\theta_{B,1}=\theta_{B,3}=\theta_{B,4}=0, \theta_{B,2}\in\mathbb{R}\setminus
           \{0\}, \theta_{A,1}= \theta_{A,2}=\theta_{A,4}=0, \theta_{A,3}\in\mathbb{R}\setminus\{0\},\\ \delta\not=0, \sigma_1\neq0,\sigma_2\neq0\}$, then
           the corresponding operators are
           \begin{eqnarray*}
           (Ax)(t) &=&  \int\limits_{\alpha_1}^{\beta_1}  I_{[\alpha,\beta]}(t)\theta_{A,3}\sin(\omega t)\sin(\omega s)x(s)d\mu_s, \\
           (Bx)(t) &=&  \int\limits_{\alpha_1}^{\beta_1}  I_{[\alpha,\beta]}(t) \theta_{B,2}\cos(\omega t)\cos(\omega s)x(s)d\mu_s,
           \end{eqnarray*}
          for almost every $t$. These operators satisfy $AB=\delta BA^2=BA=0$.

           \noindent\ref{solSystemEqWaveletExFourierGivenBfindAa1=0:item13}. If
           $\{\theta_{B,1}=\theta_{B,3}=\theta_{B,4}=0, \theta_{B,2}\in\mathbb{R}\setminus
           \{0\},\, \theta_{A,1}=\theta_{A,2}= \theta_{A,3}=0,\, \\ \theta_{A,4}\in\mathbb{R}, \, \delta\not=0, \sigma_1\neq0,\sigma_2\neq0\}$, then
           the corresponding operators are
           \begin{eqnarray*}
           (Ax)(t) &=&  \int\limits_{\alpha_1}^{\beta_1}  I_{[\alpha,\beta]}(t)\theta_{A,4}\cos(\omega t)\sin(\omega s)x(s)d\mu_s, \\
           (Bx)(t) &=&  \int\limits_{\alpha_1}^{\beta_1}  I_{[\alpha,\beta]}(t) \theta_{B,2}\cos(\omega t)\cos(\omega s)x(s)d\mu_s,
           \end{eqnarray*}
           for almost every $t$. They satisfy $AB=\delta BA^2=0$. Moreover, for all $x\in L_p(\mathbb{R},\mu)$, $1< p< \infty$ we have, for almost every $t$,
       \begin{equation*}
         (AB-BA)x(t)=-(BA)x(t)=-\theta_{A,4}\sigma_2 \theta_{B,2}\int\limits_{\alpha_1}^{\beta_1} I_{[\alpha,\beta]}(t)\cos(\omega t)\sin(\omega s)x(s)d\mu_s.
       \end{equation*}
         By applying Proposition \ref{PropositionCommutatorZeroFourSequenceCond}, since $\alpha_1<\beta_1$, $[\alpha,\beta]\supseteq [\alpha_1,\beta_1]$ and $\sigma_2\neq0$, operators $A$ and $B$ commute  if and only if either $\theta_{A,4}=0$ ($A=0$) or $\theta_{B,2}=0$ ($B=0$).

           \noindent\ref{solSystemEqWaveletExFourierGivenBfindAa1=0:item14}. If
          $\{\theta_{B,1}=\theta_{B,3}=\theta_{B,4}=0, \theta_{B,2}\in\mathbb{R}\setminus
           \{0\}, \theta_{A,1}= \theta_{A,4}=0, \theta_{A,2}=\frac{1}{\delta\sigma_1}, \theta_{A,3}\in\mathbb{R},\\ \delta\not=0, \sigma_1\neq0,\sigma_2\neq0\}$, then
           the corresponding operators are
           \begin{eqnarray*}
           (Ax)(t) &=&  \int\limits_{\alpha_1}^{\beta_1}  I_{[\alpha,\beta]}(t)\left(\frac{1}{\delta\sigma_2}\cos(\omega t)\cos(\omega s)+\theta_{A,3}\sin(\omega t)\sin(\omega s)\right)x(s)d\mu_s, \\
           (Bx)(t) &=&  \int\limits_{\alpha_1}^{\beta_1}  I_{[\alpha,\beta]}(t) \theta_{B,2}\cos(\omega t)\cos(\omega s)x(s)d\mu_s,
           \end{eqnarray*}
           for almost every $t$. These operators satisfy $AB=\delta BA^2=BA$.

           \noindent\ref{solSystemEqWaveletExFourierGivenBfindAa1=0:item15}. If
          $\{\theta_{B,1}=\theta_{B,3}=\theta_{B,4}=0, \theta_{B,2}\in\mathbb{R}\setminus
           \{0\},\, \theta_{A,1}= 0,\, \theta_{A,2}=\frac{1}{\delta\sigma_2}, \theta_{A,3}=-\frac{1}{\delta \sigma_1}, \delta\not=0,\\ \theta_{A,4}\in\mathbb{R}, \sigma_1\neq0,\sigma_2\neq0\}$, then
           the corresponding operators are
           \begin{eqnarray*}
           (Ax)(t) &=&  \int\limits_{\alpha_1}^{\beta_1}  I_{[\alpha,\beta]}(t)\left(\frac{1}{\delta\sigma_2}\cos(\omega t)\cos(\omega s)-\frac{1}{\delta\sigma_1}\sin(\omega t)\sin(\omega s)\right.
           \\
           &&\left. +\theta_{A,4}\cos(\omega t)\sin(\omega s)\right)x(s)d\mu_s, \\
           (Bx)(t) &=&  \int\limits_{\alpha_1}^{\beta_1}  I_{[\alpha,\beta]}(t) \theta_{B,2}\cos(\omega t)\cos(\omega s)x(s)d\mu_s,
           \end{eqnarray*}
           for almost every $t$. These operators satisfy $AB=\delta BA^2$. Moreover, for all $x\in L_p(\mathbb{R},\mu)$, $1< p< \infty$ we have
       \begin{equation*}
         (AB-BA)x(t)=\theta_{A,4}\sigma_2 \theta_{B,2}\int\limits_{\alpha_1}^{\beta_1} I_{[\alpha,\beta]}(t)\cos(\omega t)\sin(\omega s)x(s)d\mu_s,
       \end{equation*}
       for almost every $t$.  By applying Proposition \ref{PropositionCommutatorZeroFourSequenceCond}, since $\alpha_1<\beta_1$, $[\alpha,\beta]\supseteq [\alpha_1,\beta_1]$ and $\sigma_2\neq0$, operators $A$ and $B$ commute  if and only if either $\theta_{A,4}=0$ or $\theta_{B,2}=0$ ($B=0$).
      }
   \end{example}

\begin{example}
{\rm
Let $(\mathbb{R}, \Sigma,\mu)$ be the standard Lebesgue measure space.  
Let operators $$A:L_p(\mathbb{R},\mu)\to L_p(\mathbb{R},\mu),\quad B:L_p(\mathbb{R},\mu)\to L_p(\mathbb{R},\mu),\ 1< p<\infty$$ be defined as follows
\begin{eqnarray*}
 \textstyle (Ax)(t)&=& \int\limits_{\alpha_1}^{\beta_1} \left(\sum_{i=1}^{l_{A,1}}\sum_{j=1}^{l_{A,2}} \theta_{i,j}^A I_{[\alpha,\beta]}(t)\sin(\frac{2\pi m_i }{\lambda_t}t)\cos(\frac{2\pi k_j }{\lambda_s}s)
 \right.\\
 & &+\sum_{i=1}^{l_{A,1}}\sum_{j=1}^{l_{A,2}} \pi_{i,j}^A I_{[\alpha,\beta]}(t)\cos(\frac{2\pi m_i }{\lambda_t}t)\cos(\frac{2\pi k_j }{\lambda_s}s)\\
 & &+\sum_{i=1}^{l_{A,1}}\sum_{j=1}^{l_{A,2}} \nu_{i,j}^A I_{[\alpha,\beta]}(t)\sin(\frac{2\pi m_i }{\lambda_t}t)\sin(\frac{2\pi k_j }{\lambda_s}s)
 \\
  & & \left.+\sum_{i=1}^{l_{A,1}}\sum_{j=1}^{l_{A,2}} \varsigma_{i,j}^A I_{[\alpha,\beta]}(t)\cos(\frac{2\pi m_i }{\lambda_t}t)\sin(\frac{2\pi k_j }{\lambda_s}s)
 \right)x(s)d\mu_s,
\\
  (Bx)(t)&=& \int\limits_{\alpha_1}^{\beta_1} \left(\sum_{i=1}^{l_{A,1}}\sum_{j=1}^{l_{A,2}} \theta_{i,j}^B I_{[\alpha,\beta]}(t)\sin(\frac{2\pi m_i }{\lambda_t}t)\cos(\frac{2\pi k_j }{\lambda_s}s)
 \right.\\
 & &+\sum_{i=1}^{l_{A,1}}\sum_{j=1}^{l_{A,2}} \pi_{i,j}^B I_{[\alpha,\beta]}(t)\cos(\frac{2\pi m_i }{\lambda_t}t)\cos(\frac{2\pi k_j }{\lambda_s}s)\\
 & &+\sum_{i=1}^{l_{A,1}}\sum_{j=1}^{l_{A,2}} \nu_{i,j}^B I_{[\alpha,\beta]}(t)\sin(\frac{2\pi m_i }{\lambda_t}t)\sin(\frac{2\pi k_j }{\lambda_s}s)
 \\
  & & \left.+\sum_{i=1}^{l_{A,1}}\sum_{j=1}^{l_{A,2}} \varsigma_{i,j}^B I_{[\alpha,\beta]}(t)\cos(\frac{2\pi m_i }{\lambda_t}t)\sin(\frac{2\pi k_j }{\lambda_s}s)
 \right)x(s)d\mu_s,
\end{eqnarray*}
for almost every $t$, where the index in  $\mu$ indicates the variable of integration,
 $\alpha,\, \alpha_1,\, \beta,\, \beta_1$ are real constants such that $\alpha_1<\beta_1$, $\alpha\le \alpha_1,$ $\beta\ge \beta_1$ and $I_{E}(t)$ is the indicator function of the set $E$, $\lambda_t,\lambda_s\in \mathbb{R}\setminus \{0\}$, $l_{A,1}$, $l_{A,2}$ are positive integers, $m_i, k_j,\, \theta_{i,j}^A,\, \theta_{i,j}^B,\, \pi_{i,j}^A,\, \pi_{i,j}^B,\, \nu_{i,j}^A,\, \nu_{i,j}^B,\, \varsigma_{i,j}^A,\, \varsigma_{i,j}^B \in \mathbb{R}$, $1\le i\le l_{A,1}$, $1\le j\le l_{A,2}$. We put
\begin{gather*}
a_{i,l}(t)=b_{i,l}(t)=
\left\{\begin{array}{ll}
I_{[\alpha,\beta]}(t)\sin\left( \frac{2\pi m_i t}{\lambda_t} \right), \ & l=1 \, \mbox{ or }l=3 \\
I_{[\alpha,\beta]}(t)\cos\left( \frac{2\pi m_i t}{\lambda_t} \right), \ & l=2 \, \mbox{ or }l=4
\end{array}\right.\\
\hspace{-1.4cm}
c_{i,l}(s)=e_{i,l}(s)=
\left\{\begin{array}{ll}
\cos\left( \frac{2\pi k_i s}{\lambda_s} \right), \ & l=1 \, \mbox{ or }l=2 \\
\sin\left( \frac{2\pi k_i s}{\lambda_s} \right), \ & l=3 \, \mbox{ or }l=4
\end{array}\right.
\end{gather*}
$a_{i,l},\, b_{i,l}\in L_p(\mathbb{R},\mu)$, $1<p<\infty$, $1\le i\le l_{A,1}$, $1\le l\le 4$,  $c_{i,l},e_{i,l}\in L_q[\alpha_1,\beta_1]$, where $1<q<\infty$ such that $\frac{1}{p}+\frac{1}{q}=1$, $1\le i\le l_{A,1}$, $1\le l\le 4$.

 These operators are well defined with similar argument as in Example \ref{ExampleRepintopgensepkernelfouriersequence}  and by the fact $L_p(\mathbb{R},\mu)$ is a vector space. According to Theorem \ref{thmBothIntOPSupGenSeptedKernels} $AB=\delta BA^2$, for some $\delta \in \mathbb{R}$, $G_A=G_B=[\alpha_1,\beta_1]$, if and only if for almost every $(t,s)\in\mathbb{R}\times[\alpha_1,\beta_1]$
 \begin{gather*}
   \sum_{(i,j,l=1)}^{(l_{A,1}, l_{A,2},l_{A,3}) } \sum_{(u,w,r=1)}^{(l_{A,1}, l_{A,2},l_{A,3}) } \gamma_A^{i,j,l}\gamma_B^{u,w,r} a_{i,l}(t)Q_{G_A}(c_{j,l};b_{u,r})e_{w,r}(s)\\
   =\sum_{(u,w,r=1)}^{(l_{A,1}, l_{A,2},l_{A,3}) } \sum_{(i_1,j_1,l_1=1)}^{(l_{A,1}, l_{A,2},l_{A,3}) } \sum_{(i_2,j_2,l_2=1)}^{(l_{A,1}, l_{A,2},l_{A,3}) } \delta Q_{G_A}(a_{i_2,l_2};c_{j_1,l_1})\gamma_B^{u,w,r}
   \gamma_A^{i_1,j_1,l_1}\gamma_A^{i_2,j_2,l_2} \\
   \cdot Q_{G_A}(e_{w,r};a_{i_1,l_1})b_{u,r}(t)c_{i_2,l_2}(s),
 \end{gather*}
 where $l_{A,3}=4$, $Q_\Lambda(\cdot , \cdot)$ is defined in \eqref{QGpairingDefinition},
 \begin{gather*}
   \gamma_A^{i,j,l}=\left\{\begin{array}{cc}
                             \theta_{i,j}^A, & \,\mbox{ if } l=1 \\
                             \pi_{i,j}^A, & \, \mbox{ if } l=2 \\
                             \nu_{i,j}^A, & \,\mbox{ if } l=3\\
                             \varsigma_{i,j}^A, & \,\mbox{ if } l=4
                           \end{array} \right.
                   \         \mbox{ and } \
   \gamma_B^{i,j,l}=\left\{\begin{array}{cc}
                             \theta_{i,j}^B, & \,\mbox{ if } l=1 \\
                             \pi_{i,j}^B, & \,\mbox{ if } l=2 \\
                             \nu_{i,j}^B, & \,\mbox{ if } l=3\\
                             \varsigma_{i,j}^B, & \,\mbox{ if } l=4.
                           \end{array} \right.
 \end{gather*}
}
\end{example}

\begin{example}\label{ConstructExampleTheoremIntOpRepGenKernLp}
{\rm
Let $(\mathbb{R}, \Sigma,\mu)$ be the standard Lebesgue measure space. 
Let $$A:L_p(\mathbb{R},\mu)\to L_p(\mathbb{R},\mu),\quad  B:L_p(\mathbb{R},\mu)\to L_p(\mathbb{R},\mu),\ 1< p<\infty$$ be defined as follows
\[
 (Ax)(t)= \int\limits_0^1 \sum\limits_{m=1}^2 a_m(t)c_m(s)x(s)d\mu_s,\quad (Bx)(t)= \int\limits_0^1 \sum\limits_{k=1}^2 b_k(t)e_k(s)x(s)d\mu_s,
\]
for almost every $t$, where the index in  $\mu$ indicates the variable of integration,
$a_m,b_k\in L_p(\mathbb{R},\mu)$, $e_m,c_k\in L_q([0,1],\mu)$, $k,m=1,2$, such that
\begin{gather*}
Q_{[0,1]}(e_i,a_j)=0, \ i\not=j, \ Q_{[0,1]}(c_i,a_j)=0,\ i\not=j,\, Q_{[0,1]}(b_i,c_j)=0, \ i\not=j,\\
a_m(t)=\alpha_m b_m(t),\ c_k(s)=\pi_k e_k(s),\ k,m=1,2.
\end{gather*}
where $ \delta\in \mathbb{R}\setminus\{0\} $, $\alpha_m, \, \pi_m\in\mathbb{R}$, $m=1,2$. If either $\alpha_m=0$ or  $\pi_m= 0$ or  $Q_{[0,1]}(b_m,e_m)=0$ or $\delta \alpha_m \pi_m Q_{[0,1]}(b_m,e_m)=1,\, m=1,2,$
then operators $A$ and $B$ satisfy $AB=\delta BA^2$. In fact, by applying Corollary  \ref{corDiedroRelBothIntOPGenSeptedKernels} we have by applying bilinearity of $Q_{\Lambda}(\cdot,\cdot)$
\begin{eqnarray*}
&&\hspace{-1cm} \sum\limits_{k=1}^2 \sum\limits_{m=1}^2 a_m(t)Q_{[0,1]}(b_k,c_m)e_k(s) \\
&=&  \sum\limits_{m=1}^2 [a_m(t)Q_{[0,1]}(b_1,c_m)e_1(s) + a_m(t)Q_{[0,1]}(b_2,c_m)e_2(s)] \\
&=& Q_{[0,1]}(b_1,c_1) a_1(t)e_1(s)+Q_{[0,1]}(b_2,c_2)a_2(t)e_2(s)\\&=& Q_{[0,1]}(b_1,\pi_1 e_1) a_1(t)e_1(s)+Q_{[0,1]}(b_2,\pi_2 e_2)a_2(t)e_2(s)\\
&=& {\pi_1}Q_{[0,1]}(b_1, e_1)  a_1(t)e_1(s)+{\pi_2}Q_{[0,1]}(b_2, e_2) a_2(t)e_2(s)\\
&=& {\pi_1}\alpha_1 Q_{[0,1]}(b_1, e_1)  b_1(t)e_1(s)+{\pi_2}\alpha_2 Q_{[0,1]}(b_2, e_2) b_2(t)e_2(s)
\end{eqnarray*}
for almost every $(t,s)\in \mathbb{R}\times[0,1]$.
On the other hand,
\begin{align*}
 & \textstyle \sum\limits_{k=1}^2 \sum\limits_{i_1,i_2=1}^2 \delta b_k(t)Q_{[0,1]}(e_1,a_{i_1})\gamma_{i_1,i_2}c_{i_2}(s) \\
& \ \textstyle =\sum\limits_{i_1=1}^2 \sum\limits_{i_2=1}^2  \delta[b_1(t)Q_{[0,1]}(e_1,a_{i_1})\gamma_{i_1,i_2}c_{i_2}(s)+ b_2(t)Q_{[0,1]}(e_2,a_{i_1})\gamma_{i_1,i_2}c_{i_2}(s)]\\
 & \ \textstyle  = \sum\limits_{i_2=1}^2 \delta[b_1(t)Q_{[0,1]}(e_1,a_1)\gamma_{1,i_2}c_{i_2}(s)+b_2(t)Q_{[0,1]}(e_2,a_1)\gamma_{1,i_2}c_{i_2}(s)]\\
 &\quad \textstyle +\sum\limits_{i_2=1}^2 \delta[b_1(t)Q_{[0,1]}(e_1,a_2)\gamma_{2,i_2}c_{i_2}(s)+b_2(t)Q_{[0,1]}(e_2,a_2)\gamma_{2,i_2}c_{i_2}(s)]\\
 & \ \textstyle = \delta b_1(t)Q_{[0,1]}(e_1,a_1)Q_{[0,1]}(a_1,c_1)c_1(s)\\
 &\hspace{4cm} + \delta b_2(t)Q_{[0,1]}(e_2,a_2)Q_{[0,1]}(a_2,c_2)c_2(s)\\
 & \ \textstyle = \delta \alpha_1^2 \pi_1^2 b_1(t)Q_{[0,1]}(e_1, b_1)^2e_1(s)+
 \delta \alpha_2^2\pi_2^2 b_2(t)Q_{[0,1]}(e_2, b_2)^2 e_2(s)
 \end{align*}
for almost every $(t,s)\in \mathbb{R}\times [0,1]$.  If either $\alpha_m=0$ or  $\pi_m= 0 $ or  $Q_{[0,1]}(b_m,e_m)=0$ or $ \delta \alpha_m \pi_m Q_{[0,1]}(b_m,e_m)=1,\, m=1,2,$  then we have
\begin{gather}\nonumber
{\pi_1}\alpha_1 Q_{[0,1]}(b_1, e_1)  b_1(t)e_1(s)+{\pi_2}\alpha_2 Q_{[0,1]}(b_2, e_2) b_2(t)e_2(s)
\\ \label{ConstructExamplesGenSeparatedKernelsEqConclu}
=\delta \alpha_1^2 \pi_1^2 b_1(t)Q_{[0,1]}(e_1, b_1)^2e_1(s)+
 \delta \alpha_2^2\pi_2^2 b_2(t)Q_{[0,1]}(e_2, b_2)^2 e_2(s).
\end{gather}
In fact, if either $\alpha_1=0$ or $\pi_1=0$ or $Q_{[0,1]}(e_1, b_1)=0$ then equation \eqref{ConstructExamplesGenSeparatedKernelsEqConclu} becomes
\begin{equation*}
{\pi_2}\alpha_2 Q_{[0,1]}(b_2, e_2) b_2(t)e_2(s)
= \delta \alpha_2^2\pi_2^2 b_2(t)Q_{[0,1]}(e_2, b_2)^2 e_2(s).
\end{equation*}
which is equivalent to either  $\alpha_2=0$ or $\pi_2=0$ or $Q_{[0,1]}(e_2, b_2)=0$ or  \\
$ \delta \alpha_2 \pi_2 Q_{[0,1]}(b_2,e_2)=1$. Similarly if either $\alpha_2=0$ or $\pi_2=0$ or $Q_{[0,1]}(e_2, b_2)=0$ then equation \eqref{ConstructExamplesGenSeparatedKernelsEqConclu} is equivalent to either  $\alpha_1=0$ or $\pi_1=0$ or $Q_{[0,1]}(e_1, b_1)=0$ or
$ \delta \alpha_1 \pi_1 Q_{[0,1]}(b_1,e_1)=1$.
If $ \delta \alpha_m \pi_m Q_{[0,1]}(b_m,e_m)=1$ for $m=1,2$ then by direct computation we conclude that equation \eqref{ConstructExamplesGenSeparatedKernelsEqConclu} is fulfilled.
Therefore, by applying Corollary \ref{corDiedroRelBothIntOPGenSeptedKernels} we have $AB=\delta BA^2$.
In particular, consider the following operators
$A:L_p(\mathbb{R},\mu)\to L_p(\mathbb{R},\mu)$,  $B:L_p(\mathbb{R},\mu)\to L_p(\mathbb{R},\mu)$, $1< p<\infty$  defined as follows
\[
 \textstyle (Ax)(t)= \int\limits_0^1 \sum\limits_{m=1}^2 a_m(t)c_m(s)x(s)d\mu_s\ ,\quad (Bx)(t)= \int\limits_0^1 \sum\limits_{k=1}^2 b_k(t)e_k(s)x(s)d\mu_s,
\]
for almost every $t$, where the index in  $\mu$ indicates the variable of integration,
\begin{eqnarray*}
\sum\limits_{m=1}^2 a_m(t)c_m(s)&=& I_{[\alpha,\beta]}(t)(-6 t(4s-3)+12 t^2(3s-2)), \\[2mm]
\sum\limits_{k=1}^2 b_k(t)e_k(s)&=& I_{[\alpha,\beta]}(t)( 6t(4s-3)-12t^2(3s-2)),
\end{eqnarray*}
for almost every  $ (t,s)\in \mathbb{R}\times[0,1]$, $\alpha,\,\beta$ are real constants such that $\alpha\le 0,$ $\beta\ge 1$ and $I_{E}(t)$ is the indicator function of the set $E$.
So we have
\begin{gather*}
\begin{array}{ll}
a_1(t)=I_{[\alpha,\beta]}(t)(-6t) \ , \quad & a_2(t)=I_{[\alpha,\beta]}(t) 12t^2  \\
c_1(s)= 4s-3 \ , & c_2(s)= 3s-2 \\
b_1(t)=I_{[\alpha,\beta]}(t) 6t  \ , & b_2(t)=I_{[\alpha,\beta]}(t) (-12)t^2    \\
e_1(s)= 4s-3  \ , & e_2(s)= 3s-2.
\end{array}
\end{gather*}
 These operators are well defined and satisfy $AB=BA^2$. In fact,by applying Corollary \ref{corDiedroRelBothIntOPGenSeptedKernels} when  $\delta=1$, $n=2$, $G_A=G_B=[0,1]$  we have
\begin{equation*}
\resizebox{1\hsize}{!}{$\displaystyle
\begin{array}{ll}
Q_{[0,1]}(e_1,a_1)=\int\limits_0^1 -6s(4s-3) d\mu=1\ , \quad & Q_{[0,1]}(e_1,a_2)=\int\limits_0^1 (4s-3)12s^2 d\mu=0,\\
Q_{[0,1]}(e_2,a_1)=\int\limits_0^1 -6s(3s-2)d\mu=0\ , & Q_{[0,1]}(e_2,a_2)=\int\limits_0^1 12s^2(3s-2) d\mu=1,\\
\gamma_{1,1}=Q_{[0,1]}(a_1,c_1)=\int\limits_0^1 -6s(4s-3)d\mu=1\ , & \gamma_{2,1}=Q_{[0,1]}(a_1,c_2)=\int\limits_0^1 -6s(3s-2) d\mu=0,\\
\gamma_{1,2}=Q_{[0,1]}(a_2,c_1)=\int\limits_0^1 12s^2(4s-3) d\mu=0\ , & \gamma_{2,2}=Q_{[0,1]}(a_2,c_2)=\int\limits_0^1 -12s^2(3s-2) d\mu=1.
\end{array}$}
\end{equation*}
Therefore, we have
\begin{align*}
&  \sum_{k=1}^2 \sum_{m=1}^2 a_m(t)Q_{G_A}(b_k,c_m)e_k(s) \\
& =  \sum_{m=1}^2 [a_m(t)Q_{G_A}(b_1,c_m)e_1(s) + a_m(t)Q_{G_A}(b_2,c_m)e_2(s)] \\
& =  a_1(t)e_1(s)+a_2(t)e_2(s) =\ I_{[\alpha,\beta]}(t)(6t(4s-3)-12t^2(3s-2))
  \end{align*}
   for almost every $(t,s)\in \mathbb{R}\times [0,1]$. On the other hand,
\begin{align*}
 & \textstyle \sum\limits_{k=1}^2 \sum\limits_{i_1,i_2=1}^2 b_k(t)Q_{G_B}(e_1,a_{i_1})\gamma_{i_1,i_2}c_{i_2}(s) \\
& \quad \textstyle =
 b_1(t)Q_{G_B}(e_1,a_1)\gamma_{1,1}c_1(s)+b_2(t)Q_{G_B}(e_2,a_2)\gamma_{2,2}c_2(s)\\
 & \quad \textstyle
  =I_{[\alpha,\beta]}(t)(6t(4s-3)-12t^2(3s-2))
 \end{align*}
for almost every $(t,s)\in \mathbb{R}\times [0,1]$.
Therefore, conditions of
Corollary \ref{corDiedroRelBothIntOPGenSeptedKernels} are fulfilled and so $AB=BA^2$. Moreover,
$AB=BA^2=-A$ and $A^2=A$, that is, $A$ and $B$ commute.
}\end{example}

\begin{example}[Laurent polynomials]\label{ExampleTheoremIntOpRepGenKernLaurenPolynomial}
{\rm
Let $(\mathbb{R}, \Sigma,\mu)$ be the standard Lebesgue measure space.  
Let $A:L_p(\mathbb{R},\mu)\to L_p(\mathbb{R},\mu)$,  $B:L_p(\mathbb{R},\mu)\to L_p(\mathbb{R},\mu)$, $1< p<\infty$  defined as follows
\[
 \textstyle (Ax)(t)= \int\limits_1^2 \sum\limits_{m=1}^4 a_m(t)c_m(s)x(s)d\mu_s \ ,\quad (Bx)(t)= \int\limits_1^2 \sum\limits_{k=1}^2 b_k(t)e_k(s)x(s)d\mu_s,
\]
for almost every $t$, where the index in  $\mu$ indicates the variable of integration,
\[\begin{array}{lll}
\sum\limits_{m=1}^4 a_m(t)c_m(s)&=& I_{[\alpha,\beta]}(t)(\gamma_{A,0}+\gamma_{A,1}\frac{1}{s} +\gamma_{A,2}\frac{1}{t}+\gamma_{A,3}\frac{1}{ts}), \\[2mm]
\sum\limits_{k=1}^2 b_k(t)e_k(s)&=& I_{[\alpha,\beta]}(t)(\gamma_{B,1}\frac{1}{s}+\gamma_{B,2}\frac{1}{t}),
\end{array}
\]
for almost every  $ (t,s)\in \mathbb{R}\times[1,2]$, $\alpha,\,\beta$ are real constants such that $ 0<\alpha\le 1,$ $\beta\ge 2$, $I_{E}(t)$ is the indicator function of the set $E$, $\gamma_{A,m}, \gamma_{B,k}\in \mathbb{R}$, $m=1,2,3,4$, $k=1,2$. So we have
\begin{gather*}
\begin{array}{ll}
a_1(t)=I_{[\alpha,\beta]}(t)\gamma_{A,0} \ , \quad & a_2(t)=I_{[\alpha,\beta]}(t) \gamma_{A,1}\\ a_3(t)=I_{[\alpha,\beta]}(t)\gamma_{A,2}\frac{1}{t} \ , & a_4(t)=I_{[\alpha,\beta]}(t)\gamma_{A,3}\frac{1}{t}\\
c_1(s)=c_3(s)= 1 & c_2(s)=c_4(s)=\frac{1}{s} \\
b_1(t)=I_{[\alpha,\beta]}(t) \gamma_{B,1} \ , & b_2(t)=I_{[\alpha,\beta]}(t) \gamma_{B,2}\frac{1}{t}     \\
e_1(s)= \frac{1}{s} \ ,   & e_2(s)= 1.
\end{array}
\end{gather*}
These operators are well defined, indeed  for any fixed $p$, $1<p<\infty$, functions $a_i,b_j\in L_p(\mathbb{R},\mu)$, $c_i,e_j\in L_q([1,2],\mu)$, $i=1,2,3,4$, $j=1,2$, $q>1$, $\frac{1}{p}+\frac{1}{q}=1$. In fact,
 \begin{equation*}
  \int\limits_{\mathbb{R}} |a_i (s)|^p d\mu=\int\limits_{\alpha}^\beta |{a}_i (s)|^p d\mu<\infty,\ i=1,2,3,4.
\end{equation*}
 Similarly,
$
 \displaystyle{\int\limits_{\mathbb{R}}} |b_j (s)|^p d\mu=\displaystyle{\int\limits_{\alpha}^\beta} |\tilde{b}_j (s)|^p d\mu<\infty,\ j=1,2.
$
Analogously, we have
 \begin{equation*}
  \int\limits_{1}^2 |{c}_i (s)|^q d\mu<\infty,\ i=1,2,3, \mbox{ and } \int\limits_{1}^2 |{e}_j (s)|^q d\mu<\infty,\ j=1,2.
\end{equation*}
So we consider a monomial $F(z)=\delta z^2$, $\delta\in\mathbb{R}$, $G_A=G_B=[1,2]$ and by Corollary \ref{corDiedroRelBothIntOPGenSeptedKernels} we have $AB=\delta BA^2$  if and only if
\begin{equation*}
\sum_{k=1}^2\sum_{m=1}^4 a_m(t)Q_{G_A}(b_k,c_m)e_k(s)=\sum_{k=1}^2 \sum_{i_1,i_2=1}^4 \delta b_k(t)Q_{G_B}(e_k,a_{i_1})c_{i_2}(s)\gamma_{i_1,i_2}.
\end{equation*}
Therefore we get the following system of equations:
\begin{equation*}
\resizebox{1\hsize}{!}{$\displaystyle
\left\{\begin{array}{ll}
\gamma_{A,0}\gamma_{B,2}\ln 2+\gamma_{A,1}\gamma_{B,2}\frac{1}{2}&=\delta \gamma_{B,1}\gamma^2_{A,0}\ln 2+\delta \gamma_{B,1}\gamma_{A,0}\gamma_{A,2}(\ln 2)^2\\
& +\delta\gamma_{B,1}\gamma_{A,1}\gamma_{A,0}(\ln 2)^2
 + \delta \gamma_{B,1}\gamma_{A,1}\gamma_{A,2}(\ln 2)^2\\
 &+\delta \gamma_{B,1}\gamma_{A,2}\gamma_{A,0}\frac{1}{2}+\delta \gamma_{B,1}\gamma^2_{A,2}\frac{\ln 2}{2}\\
  & + \delta \gamma_{B,1}\gamma_{A,3}\gamma_{A,0}\frac{\ln 2}{2} +\delta \gamma_{B,1}\gamma_{A,3}\gamma_{A,2}\frac{1}{4}  \\
  \gamma_{A,0}+\gamma_{A,1}\gamma_{B,1}\ln 2 &= \delta \gamma_{B,1}\gamma_{A,0}\gamma_{A,1}\ln 2+\delta \gamma_{B,1}\gamma_{A,0}\gamma^2_{A,2}(\ln 2)^2\\
  &+ \delta \gamma_{B,1}\gamma_{A,1}\gamma_{A,3}\frac{\ln 2}{2}+ \delta \gamma_{B,1}\gamma_{A,1}\gamma_{A,3}\frac{\ln 2}{2}+\delta \gamma_{B,1}\gamma_{A,2}\gamma_{A,1}\frac{1}{2} \\
  & + \delta \gamma_{B,1}\gamma_{A,2}\gamma_{A,3}\frac{\ln 2}{2}+\delta \gamma_{B,1}\gamma_{A,3}\gamma_{A,1}\frac{\ln 2}{2}+\delta \gamma_{B,1}\gamma^2_{A,3}\frac{1}{4} \\
   \gamma_{A,2}\gamma_{B,2}\ln 2+ \gamma_{A,3}\gamma_{A,2}\frac{1}{2}& =\delta \gamma_{B,2}\gamma^2_{A,0}+\delta \gamma_{B,2}\gamma_{A,0}\gamma_{A,2}\ln 2+\delta \gamma_{B,2}\gamma_{A,1}\gamma_{A,0}\ln 2\\
   & + \delta \gamma_{B,2}\gamma_{A,1}\gamma_{A,2}\ln 2+\delta \gamma_{B,2}\gamma^2_{A,2}(\ln 2)^2 \\
   & +\delta \gamma_{B,2}\gamma_{A,3}\gamma_{A,0}(\ln 2)^2+\delta \gamma_{B,2}\gamma_{A,3}\gamma_{A,2}\frac{\ln 2}{2} \\
  \gamma_{A,2}\gamma_{B,1}\ln 2+ \gamma_{A,3}\gamma_{B,1}\ln 2 &= \delta \gamma_{B,1}\gamma_{A,0}\gamma_{A,1}\ln 2+\delta \gamma_{B,2}\gamma_{A,0}\gamma_{A,3}\ln 2\\
  & +\delta \gamma_{B,2}\gamma_{A,2}\gamma_{A,1}\ln 2+\delta \gamma_{B,2}\gamma_{A,2}\gamma_{A,3}(\ln 2)^2\\
  & + \delta \gamma_{B,2}\gamma_{A,3}\gamma_{A,1}(\ln 2)^2+\delta \gamma_{B,2}\gamma^2_{A,3}\frac{\ln 2}{2}
\end{array}
\right. $}
\end{equation*}
This system of equations has many solutions, for instance $\gamma_{A,1}=\gamma_{B,1}=0$, \\ $\gamma_{A,3}=-2\ln 2 \gamma_{A,2}$, $\gamma_{A,2},\ \gamma_{B,2}\in\mathbb{R}$. This corresponds to the following operators
\begin{eqnarray*}
  (Ax)(t)&=&\int\limits_{1}^{2} I_{[\alpha,\beta]}(t)\left(\frac{\gamma_{A,2}}{t}-\frac{2\ln 2\gamma_{A,2}}{ts} \right)x(s)d\mu_s,\\
   (Bx)(t)&=&\int\limits_{1}^{2} I_{[\alpha,\beta]}(t)\frac{\gamma_{B,2}}{t} x(s)d\mu_s,
\end{eqnarray*}
for almost every $t$.  These operators satisfy $AB=\delta BA^2=0$. Moreover, for all $x\in L_p(\mathbb{R},\mu)$, $1< p<\infty$ we have
       \begin{equation*}
         (AB-BA)x(t)=-(BAx)(t)=-\gamma_{A,2}\gamma_{B,2}\ln2 \int\limits_{1}^{2} I_{[\alpha,\beta]}(t)\frac{1}{t}\left(1-\frac{2\ln 2}{s}\right)x(s)d\mu_s,
       \end{equation*}
       for almost every $t$.  Since $\alpha <\beta $, by applying Lemma \ref{LemmaAllowInfSetsEqLp} we have that $A$ and $B$ commute if and only if $\gamma_{A,2}=0$ ($A=0$) or $\gamma_{B,2}=0$ ($B=0$).
}
\end{example}

\section*{Acknowledgments}
This work was supported by the Swedish International Development Cooperation Agency (Sida), bilateral program for capacity development in research and higher education in Mathematics in Mozambique. Domingos Djinja is grateful to
Mathematics and Applied Mathematics research environment MAM, Division of Mathematics and Physics, School of Education, Culture and Communication, M\"alardalen University for excellent environment for research in Mathematics.


\end{document}